\newtheorem{thm}{Theorem}[section]
\newtheorem{prop}[thm]{Proposition}
\newtheorem{cor}[thm]{Corollary}
\newtheorem{lem}[thm]{Lemma}
\newtheorem{defi}[thm]{Definition}
\newtheorem{remark}[thm]{Remark}
\newtheorem{example}[thm]{Example}
\newtheorem{pb}[thm]{Problem}
\newenvironment{rk}{\begin{remark}\rm}{\end{remark}}
\newenvironment{definition}{\begin{defi}\rm}{\end{defi}}
\numberwithin{equation}{section}
\newcommand{\I}{{\mathbb I}}
\newcommand{\M}{{\mathcal M}}
\renewcommand{\a}{\alpha}
\renewcommand{\b}{\beta}
\renewcommand{\d}{\delta}
\newcommand{\e}{\varepsilon}
\newcommand{\8}{\infty}
\newcommand{\el}{\ell}
\newcommand{\la}{\langle}
\newcommand{\ra}{\rangle}
\newcommand{\ep}{\varepsilon}
\newcommand{\be}{\begin{eqnarray*}}
\newcommand{\ee}{\end{eqnarray*}}
\newcommand{\beq}{\begin{equation}}
\newcommand{\eeq}{\end{equation}}
\newcommand{\beqn}{\begin{equation*}}
\newcommand{\eeqn}{\end{equation*}}
\begin{document}

\title{Noncommutative multi-parameter Wiener-Wintner type ergodic theorem}

\thanks{{\it 2010 Mathematics Subject Classification:} Primary 46L53,
46L55;
Secondary 47A35, 37A99}
\thanks{{\it Key words and phrases:} Noncommutative $L_p$-spaces, noncommutative dynamical systems, Wiener-Wintner type ergodic theorem, multi-parameter individual ergodic theorem, correlation and spectral measure, bounded Besicovitch weight class}

\author{Guixiang Hong}

\author{Mu Sun}

\maketitle

\begin{abstract}
In this paper, we establish a multi-parameter version of Bellow and Losert's Wiener-Wintner type ergodic theorem for dynamical systems not necessarily being commutative. More precisely, we introduce a weight class $\mathcal{D}$, which is shown to strictly include the multi-parameter bounded Besicovitch weight class, thus including the set $$\Lambda_d=\left\{\{\lambda^{k_1}_1\dotsm\lambda^{k_d}_d\}_{(k_1,\dots,k_d)\in \mathbb{N}^d}:\quad (\lambda_1,\dots,\lambda_d) \in \mathbb{T}^d\right\};$$
then prove a multi-parameter Bellow and Losert's Wiener-Wintner type ergodic theorem for the class $\mathcal{D}$ and for noncommutative trace preserving dynamical system $(\mathcal{M},\tau,\mathbf{T})$. Restricted to consider the set $\Lambda_d$, we also prove a noncommutative multi-parameter analogue of Bourgain's uniform Wiener-Wintner ergodic theorem.

The noncommutativity and the multi-parameter induce some difficulties in the proof. For instance, our arguments in proving the uniform convergence for a dense subset turn out to be quite different since the ``pointwise'' argument does not work in the noncommutative setting; to obtain the uniform convergence in the largest spaces, we show maximal inequality between the Orlicz spaces, which can not be deduced easily using classical extrapolation argument. Junge and Xu's noncommutative maximal inequalities with optimal order, together with the atomic decomposition of Orlicz spaces, play an essential role in overcoming the second difficulty.

\end{abstract}

\section{introduction}\label{intro}
In classical ergodic theory, $(X, \mathcal{F}, \mu,T)$ is called a finite measure-preserving dynamical system if $(X, \mathcal{F}, \mu)$ is a finite measure space and $T$ a measure preserving transformation on $X$. In 1941, Wiener and Wintner \cite{WW1941} showed that for any such dynamical system $(X,\mathcal{F},\mu,T)$ and any $f \in L_1(\mu)$, there exists a set $X_f$ of full measure in $X$ such that for any $x\in X_f$, the sequence
$$\frac{1}{n+1}\sum_{k=0}^{n} \lambda^k f\left(T^k x\right)$$
converges for all $\lambda \in \mathbb{T}$ (1-dimensional complex torus).

This result proved by Wiener and Wintner is highly non-trivial. It is because the intersection of the sets  of convergence $X_{f,\lambda}$ (from the application of Dunford-Schwartz ergodic theorem associated with the contraction $\lambda T$ to the function $f$) written as $\cap_{\lambda\in\mathbb{T}}X_{f,\lambda}$ may be empty.

To describe further development of Wiener-Wintner's theorem, let us introduce the following notion. Let $\mathcal{B}(\mu)$ be a set of functions constructed from finite measure space $(X,\mathcal{F},\mu)$.

\begin{definition}\label{def:classicalWW}
A set $\mathcal{A}$ of sequences of complex numbers $a=\{a(k)\}_{k=0}^{\8}$  is called  $\mathcal{B}$-Wiener-Wintner type (in short $\mathcal{B}$-WW type), if for any measure preserving system $(X,\mathcal{F},\mu,T)$ and any $f\in \mathcal{B}(\mu)$, there exists $X_f$ of full measure in $X$, such that for any $x\in X_f$, the sequence
$$\frac{1}{n+1}\sum_{k=0}^{n} a(k) f\left(T^k x\right)$$
converges for all $a \in \mathcal{A}$.
\end{definition}

Using this definition, Wiener-Wintner's theorem can be reformulated as: {\it The set $\Lambda_1=\{(\lambda^k)_{k=0}^{\8}:\lambda\in\mathbb{T}\}$ is of $L_1$-WW type.}  To provide a full description or characterization of the largest $L_1$-WW type set (or more generally $\mathcal B$-WW type set) becomes a natural and interesting problem. Two important advances have been made since the appearance of Wiener-Wintner's theorem but there is still no description of the largest $\mathcal{B}$-WW type set for any $\mathcal{B}$ so far as we know until the moment of writing the paper.

The first important advance was made by Bellow and Losert in 1985 \cite{BL1985}, who used the notion of correlation and spectral measures of weight sequences to construct a set $\mathcal{D}$ which strictly includes bounded Besicovitch class (c.f. \cite{Bes1954}) thus the set $\Lambda_1$,  and proved in their Theorem 3.15 that $\mathcal{D}$ is of $L_1$-WW type.

The second important advance lies in Bourgain's return times theorem (c.f. \cite{Bou1} \cite{Bou2} \cite{Bou3} \cite{BFKO1989}). This was motivated by the close relationship between Wiener-Wintner type set and good universal weights. It is defined by Bellow and Losert \cite{BL1985} that a sequence of complex numbers $a=\{a(k)\}_{k=0}^{\8}$ is called a good universal weight if for any finite measure-preserving dynamical system $(Y, \mathcal{G}, \nu, S)$ and any $ g\in L_1(\nu)$, the weighted ergodic averages $$ \frac{1}{n+1}\sum_{k=0}^{n} a(k)g\left(S^k y \right) $$ converges a.e. $y\in Y$; then they proved that: For any finite measure-preserving dynamical system $(X, \mathcal{F}, \mu,T)$ under the condition that $T$ has countable Lebesgue spectrum, for every $f \in L_{\infty}(\mu)$, there exists a set of full measure $X_f$ in $X$ such that for every $x \in X_f$, the sequence $\mathbf{a}=\{f(T^kx)\}$ is a good universal weight. In 1989, Bourgain removed this extra condition and showed Bellow and Losert's result holds for all measure-preserving dynamical systems.

Bourgain's result impacts both ergodic theory and harmonic analysis, and now is called Bourgain's return times theorem. Based on his result, using the Banach principle and H\"older's inequality, it is easy to show that: For any $(X, \mathcal{F}, \mu,T)$, any $f \in L_{p}(\mu)$ and a.e. $x\in X$, $\{f(T^kx)\}$ is a good universal weight for any function from $L_q$ with $\frac{1}{p}+ \frac{1}{q}\le1$. In our language using Definition \ref{def:classicalWW}, this result can be restated as follows: Suppose $1\le p, q\le \8$ with $\frac{1}{p}+ \frac{1}{q}\le1,$  then the set
$$\bigg\{ \left\{g(S^k y)\right\}_{k=0}^{\8}:\quad \forall (Y, \mathcal{G}, \nu, S),\quad\forall g \in L_q(\nu),\quad \text{a.e. }y\in Y \bigg\}$$ is of $L_p$-WW type.  Then it is easy to observe that if we restrict the dynamical system $(Y,\mathcal{G}, \nu, S)$ to be the rotation on the torus and take $p=1$, we get Wiener-Wintner's theorem.

There are still many other important and interesting developments related to Wiener-Wintner's theorem and Bourgain's return times theorem, we refer the reader to the survey paper by Assani and Presser \cite{AP2012} for more information.

\bigskip

Motivated by the development of quantum physics, noncommutative mathematics have been taking steps forward rapidly. As an important constituent part of noncommutative analysis, noncommutative ergodic theory has been developed since the very beginning of the theory of ``rings of operators''. However at the early stage, only mean ergodic theorems have been obtained. It is until 1976 after Lance's pioneer work \cite {la-erg} that the study of individual ergodic theorems really took off. Lance proved
that the ergodic averages associated with an automorphism of a
$\sigma$-finite von Neumann algebra which leaves invariant a
normal faithful state converge almost uniformly. On the other hand, Yeadon
\cite{Ye1977} obtained a maximal ergodic theorem in the preduals of
semifinite von Neumann algebras. Yeadon's theorem provides a
maximal ergodic inequality which might be understood as a weak
type $(1,1)$ inequality. This inequality is the ergodic analogue
of Cuculescu's \cite{cu} result obtained previously for
noncommutative martingales. In contrast
with the classical theory, the noncommutative nature of these
weak type $(1, 1)$ inequalities seems a priori unsuitable for
classical interpolation arguments. The breakthrough was made in 2007 by Junge and Xu \cite{JX2007}. In this paper, they established a sophisticated real interpolation method, which together with Yeadon's weak type $(1,1)$ inequality allows them to obtain the noncommutative Dunford-Schwartz maximal ergodic theorem, thus the noncommutative individual ergodic theorem.

In the mean time, some results on noncommutative weighted ergodic theory have also appeared. By means of the Banach principle \cite{GL2000}, pointwise convergence of the bounded Besicovitch weighted averages in noncommutative $L_1$-spaces was obtained in \cite{CLS2005}. While in the multi-parameter case, some results has been shown in \cite{Ska2005} and \cite{MMT2008}. However, it is until very recently that Litvinov \cite{L2014} proved a noncommutative version of Wiener-Wintner's theorem.

Our first goal in this paper is to establish a noncommutative multi-parameter analogue of the advance made by Bellow and Losert \cite{BL1985}, as an intermediate step to fully understand noncommutative version of Bourgain's return times theorem and related results.

To state our main results, we introduce the system of notation on multi-parameter noncommutative dynamical system. Let $\mathbb{C}$ be the set of all complex numbers, $\mathbb{R}$ all real numbers, $\mathbb{N}$ all natural numbers (including $0$), $d$ be any positive integer and $\mathbb{T}^d$ the complex torus of dimension $d$:\be
\mathbb{T}^d = \{ \mathbf{z}=(z_1,\dots,z_d) \in \mathbb{C}^d : ~ |z_j|=1, \ j=1,\dots,d \}.
\ee

Let $\mathcal{M}$ be a von Neumann algebra equipped with a normal faithful finite
trace $\tau$. In this case, we will often assume that $\tau$ is normalized, i.e., $\tau(1)=1,$ thus we call $(\M,\tau)$ a noncommutative probability space. Let $L_p(\M)(1\le p\le\8)$ be the associated noncommutative $L_p$-space and $L_p\log^r L(\M)(r>0)$ the associated noncommutative Orlicz space. The set of all projections in $\M$ is denoted as $P(\M).$

Let $\mathbf{T}=(T_1,\cdots,T_d)$ be a vector of $d$ mutually commuting normal trace preserving $\ast$-automorphisms on $\mathcal{M}$. Every $T_j$ naturally extends to a trace preserving $\ast$-automorphism on $L_p(\M)$ for all $1\le p< \8$ (c.f. Lemma 1.1 \cite{JX2007}). Then we call $(\M,\tau,\mathbf{T})$ a finite trace preserving dynamical system. The dynamical system is called ergodic if at least one $T_j$ ($1 \le j\le d$) is ergodic on $L_2(\mathcal{M})$, that is, for any $x \in L_2(\mathcal{M})$, $Tx=x$ implies that $x= c \cdot \mathbb{I}, ~c\in \mathbb{C}$ where $\mathbb{I}$ is the identity in $\M$.

For any $\mathbf{k}=(k_1,\dots,k_d)\in \mathbb{Z}^d$, any $\mathbf{z}=(z_1,\dots,z_d)\in \mathbb{C}^d$ and any $\mathbf{T}=(T_1,\cdots,T_d)$, $\mathbf{z}^\mathbf{k}$, $\mathbf{T}^\mathbf{k}$ and $|\mathbf{k}|$ are defined as follows: $\mathbf{z}^\mathbf{k}=z_1^{k_1}\cdots z_d^{k_d}$, $\mathbf{T}^\mathbf{k}=T_1^{k_1}\cdots T_d^{k_d}$, $|\mathbf{k}|=|k_1|\cdots |k_d|.$

The ergodic averages of our multiple map $\mathbf{T}$ are defined as follows:
\be
M_{\mathbf{n}}(\mathbf{T}) = \frac{1}{|\mathbf{n+1}|} \sum_{\mathbf{k}=\mathbf{0}}^{\mathbf{n}} \mathbf{T}^{\mathbf{k}}  =   \frac{1}{(n_1+1) \cdots (n_d+1)} \sum_{k_1=0}^{n_1}\cdots \sum_{k_d=0}^{n_d} T_1^{k_1}\cdots T_d^{k_d}, \ ~\ \mathbf{k,n}\in \mathbb{N}^d.
\ee
To simplify the notation, whenever no confusion arises we will write
\beq
M_{\mathbf{n}}(x)= M_{\mathbf{n}}(\mathbf{T})(x).
\eeq
for any $x\in L_p(\M)~(1\le p\le \8)$.
For any multi-parameter weight sequence $\mathbf{a}= \{a(\mathbf{k})\in \mathbb{C}\}_{\mathbf{k}\in \mathbb{N}^d}$, the corresponding weighted ergodic averages of $\mathbf{T}$ acting on some $x\in L_p(\M)~(1\le p\le \8)$ is defined as:
\beq
M_{\mathbf{n}}(x, \mathbf{a}) = \frac{1}{|\mathbf{n+1}|} \sum_{\mathbf{k}=\mathbf{0}}^{\mathbf{n}} a(\mathbf{k}) \mathbf{T}^{\mathbf{k}} x, \ ~\ \mathbf{n}\in \mathbb{N}^d.
\eeq
In particular, if $a(\mathbf{k})=\underline{\lambda}^{\mathbf{k}}= \lambda_1^{k_1}\cdots \lambda_d^{k_d}$, $\underline{\lambda}\in \mathbb{T}^d$, then we write
\beq
M_{\mathbf{n}}(x, \underline{\lambda}) = \frac{1}{|\mathbf{n+1}|} \sum_{\mathbf{k}=\mathbf{0}}^{\mathbf{n}} \underline{\lambda}^{\mathbf{k}} \mathbf{T}^{\mathbf{k}} x, \ ~\ \mathbf{n}\in \mathbb{N}^d.
\eeq

We also define the convergence of a multi-parameter sequence $\{x_\mathbf{n}\}_{\mathbf{n}\in \mathbb{N}^d}$ in any Banach space. We say $x_{\mathbf{n}}$ converges to ${x}$ and denote as $\lim_{\mathbf{n}}x_\mathbf{n} = {x}$, if given any $\e>0$, there is an $\mathbf{N}\in \mathbb{N}^d$ such that for all $\mathbf{n}>\mathbf{N}~(n_j>N_j, j=1,2,\cdots,d)$, $\|x_{\mathbf{n}} - {x}\|< \e$.

The notion of Lance's almost uniform convergence is a nice analogue of the usual pointwise convergence, and plays an important role in individual ergodic theorem. We recall this concept in multi-parameter case. Let $x_{\mathbf{n}},x\in L_0(\M)$, the space of measurable operators (see e.g. Section 1.4 of \cite{Xu07}).  A multi-parameter sequence $\{x_{\mathbf{n}}\}$ is said to converge bilaterally almost uniformly (resp. almost uniformly) to ${x}$, if for any $\e>0$, there exists $e\in P(\M)$, such that
$$ \tau(e^\bot)\le \e \text{ and } \{e(x_{\mathbf{n}}-x)e\} ~(\text{resp. }\{(x_{\mathbf{n}}-x)e\})$$
converges to $0$ in $\M$.
Usually we denote it as b.a.u. (resp. a.u.) convergence.

\bigskip

Let $\mathcal{B}(\mathcal M)$ be a subset of $L_0(\M)$ constructed from a given probability space $(\M,\tau)$. Motivated by the concept of a.u. convergence, we define the noncommutative Wiener-Wintner type weights as follows.

\begin{definition}
A set $\mathcal{A}$ of $d$-parameter sequences of complex numbers $a=\{a(\mathbf{k})\}_{k\in\mathbb{N}^d}$  is called  $\mathcal{B}$-noncommutative bilateral Wiener-Wintner type (resp. noncommutative Wiener-Wintner type),  in short $\mathcal{B}$-NCbWW type (resp. $\mathcal{B}$-NCWW type), if for any finite trace preserving dynamical system $(\M,\tau,\mathbf{T})$ and for any $x\in \mathcal{B}$ and any $\e > 0$, there exists $e\in P(\M)$, such that
$$ \tau(e^\bot)\le \e \text{ and } \{eM_{\mathbf{n}}(x,a)e \}\quad (\text{resp.}\quad \{M_{\mathbf{n}}(x,a)e \})$$
converges in $\M$ for all $\mathbf{a} \in \mathcal{A}$.
\end{definition}

\bigskip

The multi-parameter generalization of Bellow and Losert's class \cite{BL1985} is still denoted by $\mathcal{D}$ (see the exact definition in Section 3). Now we can formulate our main result.
%isHowever, the definition of $\mathcal{D}$ is given in a slightly different way  due to the multi-parameter.  The multi-parameter bounded Besicovitch weight %class is shown to be strictly included in the class $\mathcal{D}$ in section 5.

\begin{thm}\label{th:Main 1}
The class $\mathcal{D}$ is of $L_1\log^{2(d-1)}L$-NCbWW type and $L_2\log^{2(d-1)}L$-NCWW type.
\end{thm}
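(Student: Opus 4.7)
The plan is to follow a noncommutative Banach-principle strategy, combining two ingredients: a uniform maximal inequality
\[
\Bigl\|\sup_{\mathbf{a}\in \mathcal{D},\ \mathbf{n}\in \mathbb{N}^d} M_{\mathbf{n}}(x,\mathbf{a})\Bigr\|_{1,\infty} \lesssim \|x\|_{L_1\log^{2(d-1)}L(\M)}
\]
(interpreted in the appropriate $L_p(\M;\ell_\infty)$ sense, with an analogous statement for $L_2\log^{2(d-1)}L(\M)$ in the one-sided case), together with b.a.u.\ (resp.\ a.u.) convergence of $\{M_{\mathbf{n}}(x,\mathbf{a})\}_{\mathbf{n}}$ for every $\mathbf{a}\in \mathcal{D}$ simultaneously, with a single projection $e\in P(\M)$, for $x$ in a dense subset of the target Orlicz space. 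Given both ingredients, the usual $\e/3$-argument---with the projection from the maximal inequality intersected with the projection on the dense part---produces the theorem.

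For the dense-subset step, I would use the Jacobs--de Leeuw--Glicksberg splitting of $L_2(\M)$ under $\mathbf{T}$ into the closed linear span of the joint eigenvectors and its orthogonal (weakly mixing) complement. On a joint eigenvector with $\mathbf{T}^{\mathbf{k}} x = \underline{\mu}^{\mathbf{k}} x$ the average reduces to
\[
M_{\mathbf{n}}(x,\mathbf{a}) = \Bigl(\tfrac{1}{|\mathbf{n}+1|}\sum_{\mathbf{k}=\mathbf{0}}^{\mathbf{n}} a(\mathbf{k})\,\underline{\mu}^{\mathbf{k}}\Bigr)\, x,
\]
whose convergence for each $\mathbf{a}\in \mathcal{D}$ is essentially built into the defining spectral/correlation-measure property of $\mathcal{D}$; moreover, since the convergence is in operator norm, the trivial projection $e=1$ works, uniformly in $\mathbf{a}$. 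On the weakly mixing part, a van der Corput-type estimate together with the continuity of the spectral measure of $\mathbf{a}\in \mathcal{D}$ gives $L_2$-convergence $M_{\mathbf{n}}(x,\mathbf{a})\to 0$; to promote this to b.a.u./a.u.\ convergence with a single projection valid for all $\mathbf{a}\in \mathcal{D}$, one first runs the argument on a countable dense subfamily of $\mathcal{D}$ (taking a common projection by intersection) and then transfers to the full class via the maximal inequality. The dense subset in the Orlicz space is then the algebraic sum of finite combinations of joint eigenvectors and bounded weakly mixing elements. The crucial departure from the classical Bellow--Losert proof is that, absent any pointwise statement, uniformity in $\mathbf{a}$ must be threaded through the operator-norm/$L_2$-norm convergence rather than extracted pointwise.

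The main obstacle is the maximal inequality. A naive reduction to Junge--Xu's unweighted multi-parameter Dunford--Schwartz theorem (whose sharp $L_p$-maximal bounds with optimal $(p-1)^{-1}$ blowup as $p\to 1$ yield the $L_1\log^{d-1}L$ endpoint by extrapolation) does not suffice, because the supremum over $\mathbf{a}\in \mathcal{D}$ is an extra maximal operator not controlled by the trivial $\sup_{\mathbf{a}}\|a\|_\infty$ bound, and because classical scalar extrapolation does not transfer inside the $L_p(\M;\ell_\infty)$ framework. My plan is therefore to apply Junge--Xu's sharp $L_p$-maximal inequalities with their optimal order, iterate them once per parameter plus an additional iteration to absorb the $\mathcal{D}$-supremum, and combine the resulting strong-type bounds with the atomic decomposition of the noncommutative Orlicz spaces $L_p\log^{2(d-1)}L(\M)$, estimating atom-by-atom at an appropriate $L_p$-scale and summing. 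The exponent $2(d-1)$, rather than $d-1$, arises as precisely the cost of absorbing the $\mathcal{D}$-supremum on top of the $d$-parameter $\mathbf{n}$-supremum in this scheme, so it is both natural and plausibly sharp from the method.
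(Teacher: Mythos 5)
Your overall architecture (dense subset from the Jacobs--de Leeuw--Glicksberg splitting $E\oplus\mathcal{K}^\perp$, a maximal inequality on the Orlicz space via Junge--Xu plus Tao's atomic decomposition, then a Banach principle) matches the paper, and your treatment of the eigenvector part is correct. But there are two genuine gaps. First, the maximal inequality you posit, with $\sup_{\mathbf{a}\in\mathcal{D}}$ inside the maximal function, is neither available nor needed. What the argument requires (and what the paper proves) is a weak-type inequality for each \emph{fixed} bounded weight, $\tau(e^\perp)\le C\|x\|_{L_1\log^{2(d-1)}L}/\lambda$ and $\sup_{\mathbf{n}}\|eM_{\mathbf{n}}(x,\mathbf{a})e\|_\infty\le\|\mathbf{a}\|_\infty\lambda$, in which the projection $e$ does \emph{not} depend on $\mathbf{a}$: it is built from the unweighted averages of the positive parts of $x$, and the weight is absorbed trivially through $|a(\mathbf{k})|\le\|\mathbf{a}\|_\infty$. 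There is no ``extra iteration to absorb the $\mathcal{D}$-supremum''; the exponent $2(d-1)$ comes entirely from iterating the one-parameter estimate $\|\{M_n(T)y\}_n\|_{L_1\log^{s}L(\ell_\infty)}\lesssim\|y\|_{L_1\log^{s+2}L}$ (proved atom-by-atom with $p=1+(\log\frac{1}{\tau(e)})^{-(s+2)/2}$ and the optimal $(p-1)^{-2}$ constant of Junge--Xu) through $d-1$ of the parameters and finishing the last one with Yeadon's weak $(1,1)$ inequality --- consistently, the exponent vanishes at $d=1$, which your accounting would not give.

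Second, and more seriously, your plan for the weakly mixing part fails. Passing from $L_2$-convergence to a.u.\ convergence is not automatic, and the ``countable dense subfamily of $\mathcal{D}$ plus transfer via the maximal inequality'' step cannot work: the maximal inequality only transfers convergence along $\ell_\infty$-approximation of the weight, and $\mathcal{D}$ has no countable $\ell_\infty$-dense subset (already $\Lambda_d\subset\mathcal{D}$ is uncountable and uniformly separated in $\ell_\infty$). This is exactly the obstruction that makes Wiener--Wintner theorems nontrivial. The paper's mechanism is different: for $x\in\mathcal{K}^\perp\cap\M$ one shows $\sigma_x$ is continuous while $\sigma_{\mathbf{a}}$ is discrete for $\mathbf{a}\in\mathcal{D}$ (you have this the wrong way around), so the two are mutually singular and the affinity $\rho(\sigma_{\mathbf{a}},\sigma_x)$ vanishes; a Coquet-type partition $\{V_l\}$ of $\mathbb{T}^d$ and Kadison--Schwarz bound $\|M_{\mathbf{n}}(x,\mathbf{a})e\|_\infty$ by $\sum_l\|(\int f_l\,d\sigma_{\mathbf{a}}^{\mathbf{n}})^{1/2}(e\int f_l\,d\sigma_x^{\mathbf{n}}e)^{1/2}\|_\infty$; and the key lemma --- that $\int f\,d\sigma_x^{\mathbf{n}}$ converges b.a.u.\ to $\int f\,d\sigma_x\cdot\mathbb{I}$ for \emph{all} continuous $f$ with a single projection depending only on $x$ --- is what yields one projection valid for the whole uncountable class $\mathcal{D}$. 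Van der Corput, which you invoke here, is used in the paper only for Theorem \ref{th:Main 2} over $\Lambda_d$, where $\overline{a(\mathbf{j})}a(\mathbf{j}+\mathbf{d})$ is constant in $\mathbf{j}$; for a general $\mathbf{a}\in\mathcal{D}$ it does not reduce the weighted average to correlations.
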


As one expect, a natural consequence follows using some standard arguments (see the end of Section 3).

\begin{cor}\label{cor:NCWW}
Let $(\M, \tau, \mathbf{T})$ be a finite trace preserving dynamical system. Then for any $x\in L_1\log^{2(d-1)}L(\M)$ and any $\e >0$, there exists $e\in P(\M)$ such that
$\tau(e^\bot)\le \e$ and
$$\|e[M_{\mathbf{n}}(x, \mathbf{a})- F(x,\mathbf{a})]e\|_\8 \rightarrow 0$$ for all $\mathbf{a}\in \mathcal{D},$
where $F(x,\mathbf{a})\in L_1\log^{2(d-1)}L(\M)$ is the b.a.u. limit of $\{M_{\mathbf{n}}(x, \mathbf{a})\}$.

Similarly, for any $x\in L_2\log^{2(d-1)}L(\M)$ and any $\e >0$, there exists $e\in P(\M)$ such that
$\tau(e^\bot)\le \e$ and
$$\|[M_{\mathbf{n}}(x, \mathbf{a})- F(x,\mathbf{a})]e\|_\8 \rightarrow 0$$ for all $\mathbf{a}\in \mathcal{D}$, where $F(x,\mathbf{a})\in L_2\log^{2(d-1)}L(\M)$ is the a.u. limit of $\{M_{\mathbf{n}}(x, \mathbf{a})\}$.
\end{cor}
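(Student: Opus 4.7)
The plan is to deduce Corollary~\ref{cor:NCWW} from Theorem~\ref{th:Main 1} by combining a standard limit-identification argument with a maximal-inequality step that places the b.a.u.\ (resp.\ a.u.) limit inside the correct Orlicz space. First I apply Theorem~\ref{th:Main 1} with the appropriate Orlicz space to obtain a single projection $e\in P(\M)$, independent of $\mathbf{a}\in\mathcal{D}$, with $\tau(e^\bot)\le\e$, such that $\{eM_{\mathbf{n}}(x,\mathbf{a})e\}$ (resp.\ $\{M_{\mathbf{n}}(x,\mathbf{a})e\}$) converges in $\M$ for every $\mathbf{a}\in\mathcal{D}$. What remains is to produce the limit $F(x,\mathbf{a})$ as a measurable operator in the correct space and to identify the $\M$-norm limit above with the appropriate compression of $F(x,\mathbf{a})$.

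For each fixed $\mathbf{a}\in\mathcal{D}$ the existence of a b.a.u.\ (resp.\ a.u.) limit $F(x,\mathbf{a})$ for $\{M_{\mathbf{n}}(x,\mathbf{a})\}$ is essentially automatic from the NCbWW/NCWW property: applying Theorem~\ref{th:Main 1} along a sequence $\e_k=2^{-k}$, we obtain projections $e_k$ whose orthogonal complements shrink in trace, and on each compressed corner $e_k\M e_k$ the sequence $e_kM_{\mathbf{n}}(x,\mathbf{a})e_k$ converges in operator norm. The compatible $\M$-norm limits patch into a single measurable operator $F(x,\mathbf{a})$ to which $M_{\mathbf{n}}(x,\mathbf{a})$ converges b.a.u.\ (resp.\ a.u.). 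To show $F(x,\mathbf{a})$ actually lies in $L_1\log^{2(d-1)}L(\M)$ (resp.\ $L_2\log^{2(d-1)}L(\M)$), I invoke the multi-parameter noncommutative maximal ergodic inequality on Orlicz spaces cited in the introduction: since every weight $\mathbf{a}\in\mathcal{D}$ is uniformly bounded, the weighted maximal function is dominated by the Dunford--Schwartz maximal function (up to the constant $\|\mathbf{a}\|_\8$), and lower semicontinuity of the Orlicz norm under convergence in measure then places $F(x,\mathbf{a})$ inside the correct space.

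Identification of the two limits is then immediate. Since b.a.u.\ convergence implies convergence in the measure topology, $eM_{\mathbf{n}}(x,\mathbf{a})e$ converges in measure to $eF(x,\mathbf{a})e$; combined with the $\M$-norm convergence already furnished by Theorem~\ref{th:Main 1}, uniqueness of limits forces $\|e[M_{\mathbf{n}}(x,\mathbf{a})-F(x,\mathbf{a})]e\|_\8\to 0$ for every $\mathbf{a}\in\mathcal{D}$. The a.u.\ case in $L_2\log^{2(d-1)}L(\M)$ is handled identically, with the one-sided compression $M_{\mathbf{n}}(x,\mathbf{a})e$ in place of the symmetric one. Crucially, the uniformity in $\mathbf{a}$ is automatic because the projection $e$ extracted from Theorem~\ref{th:Main 1} is simultaneous for all weights in $\mathcal{D}$. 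The one substantive step beyond Theorem~\ref{th:Main 1} is the Orlicz-space maximal bound needed to guarantee $F(x,\mathbf{a})$ lands in the correct space, and this is already part of the machinery developed for the main theorem; I expect this to be the only real obstacle.
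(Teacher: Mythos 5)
Your proposal is correct and follows essentially the same route as the paper: extract $e$ from Theorem~\ref{th:Main 1}, observe that the b.a.u.\ (resp.\ a.u.) Cauchy property makes $\{M_{\mathbf{n}}(x,\mathbf{a})\}$ Cauchy in measure so that completeness of $L_0(\M)$ yields $F(x,\mathbf{a})$, and identify $eF(x,\mathbf{a})e$ with the $\M$-norm limit by uniqueness of limits in the measure topology. The only cosmetic difference is that the paper places $F(x,\mathbf{a})$ in the Orlicz space via the elementary uniform bound $\|M_{\mathbf{n}}(x,\mathbf{a})\|_{L_1\log^{2(d-1)}L}\le\|\mathbf{a}\|_\8\|x\|_{L_1\log^{2(d-1)}L}$ together with a Fatou-type theorem (a multi-parameter version of Theorem 1.2 of \cite{CLS2005}), rather than the heavier maximal-inequality argument you invoke.
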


Restricted to the one-parameter case, our result is more general than the noncommutative Wiener-Wintner's theorem obtained by Litvinov \cite{L2014}, since the set $\Lambda_1$ is strictly contained in $\mathcal {D}$. In the multi-parameter case, Theorem \ref{th:Main 1} is much stronger than the weighted pointwise ergodic theorem associated to the bounded Besicovitch class established in \cite{MMT2008} from the three aspects. Firstly, we show the convergence holds for the functions in $L_1\log^{2(d-1)}L$ (resp. $L_2\log^{2(d-1)}L$), which includes all the $L_p$ spaces with $p>1$ (resp. $p>2$) and is the largest space for the result to be true in terms of Junge and Xu's optimal noncommutative maximal inequality; Secondly, the present convergence is uniform with respect to the weights; Lastly, the bounded Besicovitch class is strictly included in $\mathcal D$ as shown in Section 5.

The multi-parameter and noncommutativity present some new difficulties, even though the main sketch of the proof follows from the one provided by Bellow and Losert. Their strategy starts with the introduction of the notion of correlation in defining $\mathcal D$, which unfortunately can not be directly generalized to the multi-parameter case. In Section 2.2, we introduce a new definition which coincides with the one in the one-parameter case. Then a commutative multi-parameter version of Wiener-Wintner's theorem follows from the relation between two weight sequences, see Section 2.2. However since the notion ``point'' disappears in the general noncommutative setting, the ergodic average $M_{\mathbf{n}}(x, \mathbf{a})$ cannot be reduced to the affinity of spectral measures of two weight sequences. Thus in Section 2.3, we introduce the notion of correlation associated to one noncommutative dynamical system for elements in $L_2$, and the final arguments in Section 3 are very different. Also due to the noncommutativity, classical extrapolation argument does not work in establishing multi-parameter maximal inequality. In the present paper, we make use of Tao's atomic decomposition of Orlicz space \cite{Tao2001} to carry out the extrapolation argument. It turns out that our argument is much easier than that in \cite{Hu2009}, which seems only applicable to some special cases.

However, our result is not so satisfactory in the sense that our $\mathbf{T}$ is restricted to be automorphisms. But in the classical case, $\mathbf{T}$ could be any Dunford-Schwartz operators due to the dilation theory, see for instance \cite{BO1983}. The reader can find the classical dilation theory in \cite{Pel76}. But the noncommutative counterpart fails for completely contractive operators, see \cite{JRS10} and the references therein. Therefore it remains an open problem that whether our definition of $\mathcal{B}$-NCWW type is equivalent to the one defined through completely contractive operators.
\bigskip

Considering the multi-parameter version of the set $\Lambda_1$, we obtain the noncommutative multi-parameter Bourgain's uniform Wiener-Wintner theorem \cite{Bou3}.

\begin{thm}\label{th:Main 2}
Given an ergodic finite trace preserving system $(\M, \tau, \mathbf{T})$, define $\mathcal{K}$ as the $\|\cdot\|_2$-closure of $E$---the linear span of $ \bigcap_{j=1}^d E_j$ where
\be
E_j= \{ x \in L_2(\mathcal{M})| T_j(x)= \mu_j x \text{ for some }\mu_j \in \mathbb{T} \},j=1,\cdots,d.
\ee For any $x\in \mathcal{K}^{\bot}\cap \M$ and any $\e>0$, there exists $e\in P(\M)$ that
$$ \tau(e^\bot)\le \e \text{ and } \lim_{\mathbf{n}} \sup_{\underline{\lambda}\in \mathbb{T}^d} \left\| M_{\mathbf{n}}(x,\underline{\lambda})e \right\|_\8 = 0.
$$
\end{thm}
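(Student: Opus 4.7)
My plan is to adapt Bourgain's strategy for the uniform Wiener--Wintner theorem~\cite{Bou3} to the noncommutative multi-parameter setting. The three ingredients are: an operator-valued multi-parameter Van der Corput inequality (to transfer the $\underline{\lambda}$-uniformity into a Ces\`aro sum of correlation ergodic averages), the noncommutative multi-parameter individual ergodic theorem of Junge--Xu~\cite{JX2007}, and the multi-parameter Wiener spectral lemma applied to the joint spectral measure of $\mathbf{T}$ at $x$; a diagonal argument then produces a single projection $e$.

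Step~1 (operator-valued multi-parameter Van der Corput). The key observation is the identity $y_k^* y_{k+h}=\underline{\lambda}_1^{h}T_1^k(y^* T_1^{h} y)$ for $y_k=\underline{\lambda}_1^k T_1^k y$, so that the $\underline{\lambda}_1$-dependence becomes a scalar unimodular factor that disappears once the operator norm is taken. Iterating the one-parameter operator Van der Corput inequality in each of the $d$ commuting directions, one obtains, for every $\mathbf{H}\le \mathbf{n}$, an inequality of the form
\beqn
\sup_{\underline{\lambda}\in\T^d}\bigl\|M_{\mathbf{n}}(x,\underline{\lambda})\bigr\|_\infty^{2^d}\le\frac{C_d}{\prod_j(H_j+1)}\sum_{\mathbf{h}\in[0,\mathbf{H}]^d}\bigl\|M_{\mathbf{n}}(y_{\mathbf{h}})\bigr\|_\infty+C_d\,\e(\mathbf{H},\mathbf{n}),
\eeqn
where each $y_{\mathbf{h}}\in\M$ is a $d$-fold nested correlation of $x$ with its $\mathbf{T}^{\mathbf{h}}$-translate (bounded by $\|x\|_\infty^{2^d}$) and $\e(\mathbf{H},\mathbf{n})\to0$ whenever $\min_j(n_j/H_j)\to\8$. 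Since this is an operator inequality, sandwiching by a projection $e$ gives the corresponding bound on $\|M_{\mathbf{n}}(x,\underline{\lambda})e\|_\infty^{2^d}$ in terms of $\|e M_{\mathbf{n}}(y_{\mathbf{h}})e\|_\infty$, because $(M_{\mathbf{n}}(x,\underline{\lambda})e)^* M_{\mathbf{n}}(x,\underline{\lambda})e=eM_{\mathbf{n}}(x,\underline{\lambda})^*M_{\mathbf{n}}(x,\underline{\lambda})e$.

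Step~2 (ergodic theorem and Wiener lemma). For each fixed $\mathbf{h}$, the noncommutative multi-parameter individual ergodic theorem of Junge--Xu applied to $y_{\mathbf{h}}\in\M\subset L_2(\M)$ produces a projection, with arbitrarily small complement, on which $M_{\mathbf{n}}(y_{\mathbf{h}})$ converges almost uniformly to the projection $F_{\mathbf{h}}$ of $y_{\mathbf{h}}$ onto the joint $\mathbf{T}$-fixed subspace of $L_2(\M)$. Since at least one $T_j$ is ergodic, this subspace equals $\com\,\I$, so $F_{\mathbf{h}}=\tau(y_{\mathbf{h}})\,\I$ and $\|eF_{\mathbf{h}}e\|_\infty=|\tau(y_{\mathbf{h}})|$. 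Taking the infimum of the finitely many such projections, indexed by $\mathbf{h}\in[0,\mathbf{H}]^d$, yields a single projection $e_{\mathbf{H}}$ of arbitrarily small complement on which $\|eM_{\mathbf{n}}(y_{\mathbf{h}})e-\tau(y_{\mathbf{h}})e\|_\infty\to0$ uniformly in $\mathbf{h}$. A standard polarisation unfolds the nested correlation so that $\tau(y_{\mathbf{h}})$ is a Fourier coefficient of a measure on $\T^d$ derived from the joint spectral measure $\mu_x$ of $\mathbf{T}$ at $x$; the hypothesis $x\in\mathcal{K}^\bot$ forces $\mu_x$ to be atomless on $\T^d$, and the classical multi-parameter Wiener spectral lemma then yields
\beqn
\lim_{\min_j H_j\to\8}\frac{1}{\prod_j(H_j+1)}\sum_{\mathbf{h}\in[0,\mathbf{H}]^d}|\tau(y_{\mathbf{h}})|=0.
\eeqn

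Step~3 (diagonal assembly). For every $k\ge1$, successively choose $\mathbf{H}_k$ so that the Ces\`aro sum in Step~2 is at most $2^{-2^dk-1}$, then a projection $e_k=e_{\mathbf{H}_k}$ of complement trace at most $\e\,2^{-k}$, and finally $\mathbf{n}_k$ large enough that the bound from Step~1 (with $\mathbf{H}=\mathbf{H}_k$) combined with Step~2 gives $\sup_{\underline{\lambda}}\|M_{\mathbf{n}}(x,\underline{\lambda})e_k\|_\infty\le2^{-k}$ for every $\mathbf{n}\ge\mathbf{n}_k$. Setting $e:=\bw_{k\ge1}e_k$ gives $\tau(e^\bot)\le\e$ and $\sup_{\underline{\lambda}}\|M_{\mathbf{n}}(x,\underline{\lambda})e\|_\infty\to0$. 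The main obstacle I expect is Step~1: the $d$-fold iteration of the operator-valued Van der Corput inequality has to be set up carefully, since one cannot take $|\cdot|^2$ of an arbitrary operator inequality, and the intermediate applications of Kadison--Schwarz must be arranged so that $y_{\mathbf{h}}$ stays in $\M$ and $\tau(y_{\mathbf{h}})$ retains a clean spectral interpretation allowing Wiener's lemma to fire.
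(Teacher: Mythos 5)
Your overall architecture --- a multi-parameter operator Van der Corput inequality, then the Junge--Xu multi-parameter individual ergodic theorem applied to the correlation operators, continuity of the spectral measure $\sigma_x$ for $x\in\mathcal{K}^{\bot}$, Wiener's criterion, and a diagonal choice of projections --- is exactly the paper's strategy, and your Steps~2 and~3 match the paper's proof in substance. The genuine gap is Step~1. You propose to obtain the multi-parameter inequality by iterating the one-parameter operator Van der Corput inequality direction by direction, arriving at a bound on $\sup_{\underline{\lambda}}\|M_{\mathbf{n}}(x,\underline{\lambda})\|_\infty^{2^d}$; you flag this as the main obstacle but do not resolve it, and in fact the iteration does not go through. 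Concretely, after one application in the first direction with $b_{j_1}=\frac{1}{n_2}\sum_{j_2}a_{j_1,j_2}$, the right-hand side contains $\bigl\|\frac{1}{n_1}\sum_{j_1}b_{j_1}^{\ast}b_{j_1+d_1}\bigr\|$, and $b_{j_1}^{\ast}b_{j_1+d_1}=\frac{1}{n_2^2}\sum_{j_2,j_2'}a_{j_1,j_2}^{\ast}a_{j_1+d_1,j_2'}$ is a full double sum over $(j_2,j_2')$ with no small-lag structure: re-indexing $j_2'=j_2+d_2$ produces on the order of $n_2$ lags $d_2$ rather than $h_2$, so no decay is gained, while the operator Cauchy--Schwarz alternative destroys the $d_1$-shift. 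In the scalar case one escapes by expanding $|\cdot|^2$ and re-summing, which is precisely the move unavailable for operator norms; the paper states explicitly that its Lemma~4.1 cannot be deduced from the one-parameter case by iteration, for this reason.

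The paper's Lemma~4.1 circumvents the obstruction by performing the Fej\'er-type smoothing (its Formula~1) in all variables simultaneously, applying the Kadison--Schwarz inequality once to the resulting multiple sum, and only then unfolding variable by variable with its Formula~2. This yields the square (not the $2^d$-th power) of the supremum, bounded by a Ces\`aro average over $\mathbf{h}\in[0,\mathbf{H}]$ of terms $\bigl\|\frac{1}{n_1n_2}\sum_{j_1,j_2} a_{j_1,j_2}^{\ast}a_{j_1+d_1,j_2+d_2}\bigr\|$ together with the mixed-sign terms $a_{j_1+d_1,j_2}^{\ast}a_{j_1,j_2+d_2}$; the latter produce Fourier coefficients $\widehat{\sigma_x}(d_1,-d_2)$ with lags of opposite signs, which are absent from your ansatz (you only list correlations with $\mathbf{h}$ in the positive orthant) but are harmless since Wiener's criterion controls the full symmetric block. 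Once the correct Lemma~4.1 is in hand, the remainder of your argument is sound and coincides with the paper's.
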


Theorem \ref{th:Main 2} is in a sense stronger than Thoerem \ref{th:Main 1}, since we get convergence of $\sup_{\underline{\lambda}\in\mathbb{T}^d} \left\| M_{\mathbf{n}}(x,\underline{\lambda})e \right\|_\8 $. In one-parameter case, Litvinov used a noncommutative Van der Corput's inequality, which was discovered in \cite{NSZ2005}, to obtain the convergence independent of weights in $\Lambda_1$. In the present paper, we firstly establish a multi-parameter Van der Corput's inequality, which cannot be deduced from one-parameter case by iteration due to the noncommutativity. Then the rest of argument relies on a spectral characterization of $\mathcal{K}^{\bot}$ and the ergodic theorems by Junge and Xu \cite{JX2007}.

\section{preliminaries}\label{pre}

\subsection{Noncommutative vector-valued $L_p$ spaces}
We use standard notions for the theory of noncommutative $L_p$ spaces. Our main references are \cite{PX2003} and \cite{Xu07}.
Let $\mathcal{M}$ be a von Neumann algebra equipped with a normal finite
faithful trace $\tau$. Let $L_0(\M)$ be the spaces of measurable operators associated to $(\M,\tau)$. For a measurable operator $x$, its generalized singular number is defined as
$$\mu_t(x) = \inf \{ \lambda > 0 : \tau \big( \mathds{1}_{(\lambda,\8)}(|x|)\big)\le t\}, \ \ ~ t>0.$$
The trace $\tau$ can be extended to the positive cone $L_0^+(\M)$ of $L_0(\M)$, still denoted by $\tau$,
$$\tau(x) = \int_0^\8 \mu_t(x) dt, \ \ ~ x\in L_0^+(\M).$$
Given $0 < p < \8,$ let
$$L_p(\M) = \{x \in L_0(\M) : \tau (|x|^p) < \8\}$$
and for $x \in L_p(\M),$
$$\|x\|_p = \big(\tau (|x|^p)\big)^{\frac1p} = \big(\int_0^\8 (\mu_t(x))^p dt \big)^{\frac1p}.$$
Then $(L_p(\M),\|\cdot\|_p)$ is a Banach space (or quasi-Banach space when $p<1$). This is the noncommutative $L_p$ space associated with $(\M,\tau)$, denoted by $L_p(\M,\tau)$ or simply by $L_p(\M)$.  As usual, we set $L_\infty(\M,\tau)=\M$ equipped with the operator norm.

The noncommutative Orlicz spaces are defined in a similar way as commutative ones. Given an Orlicz function $\Phi$, the Orlicz
space $L_\Phi(\M)$ is defined as the set of all measurable operators $x$ such
that $\Phi( \frac{|x|}{\lambda} ) \in L_1(\M)$ for some $ \lambda>0$. Equipped with the norm
$$ \|x\|_\Phi = \inf\left\{ \lambda>0: \tau \left[ \Phi\left( \frac{|x|}{\lambda} \right) \right] \le 1\right\},$$
$L_\Phi(M)$ is a Banach space.
When $\Phi(t)=t^p$ with $1\leq p<\infty$, the space $L_\Phi(\M)$ coincides with $L_p(\M)$. If $\Phi(t)= t^p(1+ \log^+t)^r$ for $ 1\le p <\8 $ and $r>0$, we get the space $L_p\log^r L(\M)$. From the definition, it is easy to check that whenever $(\M,\tau)$ is a probability space, we have
$$L_q(\M)\subset L_p\log^r L(\M)\subset L_s(\M)$$
for $q>p\geq s\geq1$.

\bigskip

The spaces $L_p(\M; \ell_\8)$ and $L_p(\M; \ell_\8^c)$ play an important role in the formulation of noncommutative maximal inequalities. A sequence
$ \{x_n\}_{n\ge 0} \subset L_p(\M)$ belongs to $L_p(\M; \ell_\8)$ if and only if it can be factored
as $x_n = a y_n b$ with $a, b \in L_{2p}(\M)$ and a bounded sequence $\{y_n\} \subset L_\8(\M)$. We
then define
$$\|\{x_n\}_n\|_{L_p(\ell_\8)}=\inf_{x_n = a y_n b}\big\{\|a\|_{2p}\,
 \sup_{n\ge0}\|y_n\|_\8\,\|b\|_{2p}\big\} .$$
Following \cite{JX2007}, this norm is symbolically denoted by $\|{\sup_n}^+ x_n\|_p$. It is shown in \cite{JX2007} that a positive
sequence $\{x_n\}_n$ belongs to $L_p(\M; \ell_\8)$ if and only if there exists
$a \in L^+_p (\M)$ such that $x_n \le a$ for all $n \ge 0$. Moreover

$$\|{\sup_n}^+ x_n\|_p = \inf\big\{\|a\|_{p}\;:\; a\in L^+_p(\M)
 \;\mbox{s.t.}\; x_n\le a,\ \forall\;n\ge0\big\}.$$
Here and in the rest of the paper, $L^+_p (\M)$ denotes the positive cone of $L_p(\M).$ The space
$L_p(\M; \ell_\8^c)$ is defined to be the set of sequences $\{x_n\}_{n\ge 0}$ with $\{|x_n|^2\}_{n\ge 0}$ belonging to $L_{p/2}(\M; \ell_\8)$ equipped with (quasi) norm
$$\|\{x_n\}_n\|_{L_p(\ell^c_\8)}=\|\{|x_n|^2\}_n\|^{\frac12}_{L_{\frac p2}(\ell_\8)}.$$
We refer to \cite{Jun2002}, \cite{JX2007} and \cite{Musat2003} for more information on these spaces and related facts.

The vector-valued Orlicz spaces $L_p\log^r L(\M; \ell_\8)$  $(1\le p< \8,\ r>0)$ are firstly introduced by Bekjan {\it et al} in \cite{BCO2013}. The key observation is that ${L_p(\ell_\8)}$-norm has an equivalent formulation:
$$\|\{x_n\}_n\|_{L_p(\ell_\8)}=\inf \big\{\frac12(\|a\|^2_{2p}+\|b\|^2_{2p})\,
 \sup_{n\ge0}\|y_n\|_\8\big\},$$
where the infimum is taken over the same parameter as before. Given an Orlicz function $\Phi$, let $\{x_n\}$ be a sequence of operators in $L_{\Phi}(\M)$, we define
$$\tau\big(\Phi({\sup_n}^+ x_n)\big)=\inf \big\{\frac12\big(\tau(\Phi(|a|^2))+\tau(\Phi(|b|^2))\big)\,
 \sup_{n\ge0}\|y_n\|_\8\big\},$$
where the infimum is taken over all the decompositions $x_n=ay_nb$ for $a,b\in L_0(\M)$ and $y_n\in L_\infty(\M)$ with $|a|^2,|b|^2\in L_\Phi(\M)$ and $\sup_n\|y_n\|_\infty\leq1$. Then $L_\Phi(\M;\ell_\infty)$ is defined to be the set of sequences $\{x_n\}_n\subset L_\Phi(\M)$ such that there exists one $\lambda>0$ satisfying $$\tau\big(\Phi({\sup_n}^+ \frac{x_n}\lambda)\big)<\infty$$
equipped with the norm
$$ \|\{x_n\}_n\|_{L_\Phi(\ell_\infty)} = \inf\left\{ \lambda>0: \tau\big(\Phi({\sup_n}^+ \frac{x_n}\lambda)\big)<1\right\}.$$
Then $(L_\Phi(\M;\ell_\infty),\|\cdot\|_{L_\Phi(\ell_\infty)})$ is a Banach space.
It was proved in \cite{BCO2013} that a similar characterization holds for sequences of positive operators:
$$\tau\big(\Phi({\sup_n}^+ x_n)\big) \thickapprox \inf\big\{\tau(\Phi(a))\;:\; a\in L^+_\Phi(\M)
 \;\mbox{s.t.}\; x_n\le a,\ \forall\;n\ge0\big\}$$
which implies a similar characterization for the norm
$$\|\{x_n\}_n\|_{L_\Phi(\ell_\infty)} \thickapprox \inf\big\{\|a\|_{\Phi}\;:\; a\in L^+_\Phi(\M)
 \;\mbox{s.t.}\; x_n\le a,\ \forall\;n\ge0\big\}.$$
From the definition, it is not difficult to check that whenever $(\M,\tau)$ is a probability space, we have
$$L_q(\M;\ell_\infty)\subset L_p\log^r L(\M;\ell_\infty)\subset L_s(\M;\ell_\infty)$$
for $q>p\geq s\geq1$. We refer the reader to \cite{BCO2013} for more information on the vector-valued Orlicz spaces.

\subsection{Correlation and spectral measure associated to a weight}

In the one parameter case, as it is defined in \cite{BL1985}, a weight sequence $\{a(k)\}_{k=0}^\8$ has a correlation if
$$ \gamma_a(m)=\lim_n \frac{1}{n+1} \sum_{k=0}^{n}\overline{a(k)}a(k+m)$$ exists for all $m\in \mathbb{N}.$ Then one can extend $\gamma_a$ to a positive definite function on $\mathbb{Z}$ by setting $\gamma_a(-m)=\overline{\gamma_a(m)}$, $m\in \mathbb{N}$, and the spectral measure on $\mathbb{T}$ associated with $a$ comes from Bochner's theorem. However, in the multi-parameter case, this way of extending $\gamma_\mathbf{a}$ to be defined on $\mathbb{Z}^d$ simply does not work. Instead, we start with extending the weight sequence to be defined on $\mathbb{Z}^d$, then define its correlation on the whole $\mathbb{Z}^d$ using two-sided averages which is positive definite, thus the associated spectral measure is defined naturally by Bochner's theorem. Let us give some necessary details.

Let $\mathbf{a}=\{a(\mathbf{k})\}_{\mathbf{k}\in \mathbb{N}^d}$ be a multi-parameter weight sequence. We extend $\mathbf{a}$ to a complex function on $\mathbb{Z}^d$ by setting $a(\mathbf{k})=0$ if $\mathbf{k}\in \mathbb{Z}^d \setminus \mathbb{N}^d.$ For $\mathbf{m}\in \mathbb{Z}^d$, the $\mathbf{m}$-translation of a function $\mathbf{a}$ is defined as $\mathbf{a}^{\mathbf{m}}= \{a(\mathbf{k+m})\}_{k\in \mathbb{Z}^d}$.

\begin{definition}\label{def:SpaceS}
Given a multi-parameter weight sequence $\mathbf{a}$ on $\mathbb{N}^d$, we say it has a correlation if
\be\begin{split}
\gamma_\mathbf{a}(\mathbf{m}) & = \lim_{\mathbf{n}} \frac{1}{ | \mathbf{n+1} |} \sum_{\mathbf{k} = - \mathbf{n}}^{\mathbf{n}} \overline{a( \mathbf{k} )} a ( \mathbf{k}+\mathbf{m} )\\
& = \lim_{\mathbf{n}} \frac{1}{ | \mathbf{n+1} |} \sum_{k_1 = \max\{ 0,-m_1 \}}^{n_1} \cdots \sum_{k_d = \max\{ 0,-m_d \}}^{n_d} \overline{a( \mathbf{k} )} a ( \mathbf{k}+\mathbf{m} )
\end{split}\ee
exists for all $\mathbf{m}\in \mathbb{Z}^d.$
We define the space $S$ to be the set of all sequences having correlations.
\end{definition}

\begin{rk}\label{rk:spectralmeasure}
Restricted to the one-parameter case, it is easy to check that our new definition of correlation coincides with the one given in \cite{BL1985}, which was first introduced by Wiener. It is also easy to verify that the correlation function $\gamma_{\mathbf{a}}$ is positive definite on $\mathbb{Z}^d$ (see Proposition \ref{prop:translation} in Section 5 for details). Then by Bochner's Theorem \cite{Rudin1962}, we have a representation
\be
\gamma_{\mathbf{a}}(\mathbf{m}) = \int_{\mathbb{T}^d} \mathbf{z}^{\mathbf{m}} d \sigma_{\mathbf{a}}(\mathbf{z}), \ \ \ \ \ \mathbf{m}\in \mathbb{Z}^d,
\ee
where the positive Borel measure $\sigma_{\mathbf{a}}$ is uniquely determined by $\mathbf{a}$; we call $\sigma_{\mathbf{a}}$ the {spectral measure associated with the weight} $\mathbf{a} \in S$.
\end{rk}

In the following, we introduce some multi-parameter analogues of the approximation results related to the spectral measure theory. We omit the proofs, since the arguments are parallel to the ones  in the one parameter case, see for instance \cite{CKF1977}.

\begin{definition}\label{def:n-measure}
Let $\mathbf{a}=\{a(\mathbf{k})\}_{\mathbf{k}\in \mathbb{N}^d}$. For each $\mathbf{n}\in \mathbb{N}^d $, we define
the positive Borel measure $\sigma_{\mathbf{a}}^{\mathbf{n}}$ on the torus $\mathbb{T}^d$ by
\be
\sigma_{\mathbf{a}}^{\mathbf{n}}(E)= \int_E \frac{1}{|\mathbf{n+1}|} \left| \sum_{\mathbf{k}=\mathbf{0}}^{\mathbf{n}} a(\mathbf{k})\overline{\mathbf{z}}^{\mathbf{k}} \right|^2 d\mathbf{z}
\ee
for any Borel set $E \subset \mathbb{T}^d$, where $d\mathbf{z}$ is the normalized Haar measure on $\mathbb{T}^d$.
\end{definition}

\begin{prop}\label{prop:measureconvergence}
Let $\mathbf{a} \in S$. Then the measure $\sigma_{\mathbf{a}}^{\mathbf{n}}$ converges weakly to $\sigma_{\mathbf{a}}$.
\end{prop}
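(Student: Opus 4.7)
The plan is to apply Bochner's theorem in reverse: it suffices to check that $\widehat{\sigma}_{\mathbf{a}}^{\mathbf{n}}(\mathbf{m}) \to \gamma_{\mathbf{a}}(\mathbf{m}) = \widehat{\sigma}_{\mathbf{a}}(\mathbf{m})$ for every $\mathbf{m}\in\mathbb{Z}^d$, and then upgrade this pointwise Fourier convergence to weak convergence on $C(\mathbb{T}^d)$ via density of trigonometric polynomials.

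First, I would expand the square $\bigl|\sum_{\mathbf{k}\in[\mathbf{0},\mathbf{n}]} a(\mathbf{k})\overline{\mathbf{z}}^{\mathbf{k}}\bigr|^2$ as a double sum and apply the orthogonality relation $\int_{\mathbb{T}^d}\mathbf{z}^{\mathbf{p}}\,d\mathbf{z}=\delta_{\mathbf{p},\mathbf{0}}$ to obtain
\beq
\widehat{\sigma}_{\mathbf{a}}^{\mathbf{n}}(\mathbf{m}) \;=\; \frac{1}{|\mathbf{n+1}|}\sum_{\mathbf{k},\,\mathbf{k}+\mathbf{m}\in[\mathbf{0},\mathbf{n}]}\overline{a(\mathbf{k})}\,a(\mathbf{k}+\mathbf{m}).
\eeq
This expression differs from the $\mathbf{n}$-th partial sum defining $\gamma_{\mathbf{a}}(\mathbf{m})$ in Definition \ref{def:SpaceS} only by the extra constraint that $\mathbf{k}+\mathbf{m}$ also lie in $[\mathbf{0},\mathbf{n}]$; the symmetric difference of the two index sets is contained in a union of $d$ ``slabs,'' the $j$-th of which has thickness $|m_j|$ in coordinate $j$ and full length in the remaining coordinates.

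Second, I would estimate this boundary contribution. Because $\gamma_{\mathbf{a}}(\mathbf{0})$ exists, the averages $\frac{1}{|\mathbf{n+1}|}\sum_{\mathbf{k}=\mathbf{0}}^{\mathbf{n}}|a(\mathbf{k})|^2$ are uniformly bounded; applying Cauchy-Schwarz slab by slab shows that the discrepancy is $O\bigl(\sum_j |m_j|/(n_j+1)\bigr)\to 0$ as $\mathbf{n}\to\infty$. Hence $\widehat{\sigma}_{\mathbf{a}}^{\mathbf{n}}(\mathbf{m})\to\gamma_{\mathbf{a}}(\mathbf{m})$ for every fixed $\mathbf{m}$. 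Taking $\mathbf{m}=\mathbf{0}$ in particular gives $\sigma_{\mathbf{a}}^{\mathbf{n}}(\mathbb{T}^d)\to\gamma_{\mathbf{a}}(\mathbf{0})=\sigma_{\mathbf{a}}(\mathbb{T}^d)$, so the total masses are uniformly bounded.

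Third, since the trigonometric polynomials are dense in $C(\mathbb{T}^d)$ by Stone--Weierstrass, the pointwise convergence of every Fourier coefficient together with the uniform bound on total mass upgrades by an $\varepsilon/3$ argument to $\int f\,d\sigma_{\mathbf{a}}^{\mathbf{n}}\to\int f\,d\sigma_{\mathbf{a}}$ for all $f\in C(\mathbb{T}^d)$, which is exactly weak convergence.

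The only delicate point is the multi-parameter boundary estimate in the second step: in one dimension the slab is a single interval of length $|m|$ and the bound is immediate, but in several dimensions one must handle each of the $d$ slabs separately and estimate the $\ell^2$-average of $|a|$ on each slab via the uniform bound furnished by $\gamma_{\mathbf{a}}(\mathbf{0})$, since no pointwise bound on the weight is available.
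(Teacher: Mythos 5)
Your overall architecture is exactly the standard one (the paper in fact omits this proof, referring to the one-parameter argument in \cite{CKF1977}): compute $\widehat{\sigma_{\mathbf{a}}^{\mathbf{n}}}(\mathbf{m})$ by orthogonality, show it tends to $\gamma_{\mathbf{a}}(\mathbf{m})=\widehat{\sigma_{\mathbf{a}}}(\mathbf{m})$, and upgrade to weak convergence via Stone--Weierstrass plus the uniform mass bound $\sigma_{\mathbf{a}}^{\mathbf{n}}(\mathbb{T}^d)\to\gamma_{\mathbf{a}}(\mathbf{0})$. Steps one and three are fine.

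The gap is in your boundary estimate, precisely at the point you flag as delicate. For a general $\mathbf{a}\in S$ no pointwise bound is available, and the uniform bound on the full-box averages $\frac{1}{|\mathbf{n+1}|}\sum_{\mathbf{k}=\mathbf{0}}^{\mathbf{n}}|a(\mathbf{k})|^2$ furnished by $\gamma_{\mathbf{a}}(\mathbf{0})$ only shows that the normalized $\ell^2$-mass of a slab is $O(1)$, not $o(1)$, and certainly not $O(|m_j|/(n_j+1))$; that rate would require $\|\mathbf{a}\|_\infty<\infty$, which is not assumed in the definition of $S$ (boundedness is only imposed later, in the definition of $\mathcal{D}$). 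So as written, Cauchy--Schwarz slab by slab leaves you with a bounded, not vanishing, discrepancy. The fix is cheap but is a different mechanism: since the two index sets are nested boxes with the same lower corner, each slab is a set-difference of two full boxes, and
\beqn
\frac{1}{|\mathbf{n+1}|}\sum_{n_j-m_j<k_j\le n_j}|a(\mathbf{k})|^2
=\frac{1}{|\mathbf{n+1}|}\sum_{[\mathbf{0},\mathbf{n}]}|a(\mathbf{k})|^2-\frac{|\mathbf{n}'+1|}{|\mathbf{n+1}|}\cdot\frac{1}{|\mathbf{n}'+1|}\sum_{[\mathbf{0},\mathbf{n}']}|a(\mathbf{k})|^2\longrightarrow \gamma_{\mathbf{a}}(\mathbf{0})-\gamma_{\mathbf{a}}(\mathbf{0})=0,
\eeqn
where $\mathbf{n}'$ is $\mathbf{n}$ with $n_j$ replaced by $n_j-m_j$; this uses the \emph{convergence} of the full-box averages to a common limit, not merely their boundedness. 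With this substitution (and the companion bound $\frac{1}{|\mathbf{n+1}|}\sum_{\mathrm{slab}}|a(\mathbf{k}+\mathbf{m})|^2=O(1)$ for the other Cauchy--Schwarz factor), your proof closes and coincides with the intended one.
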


\begin{definition}\label{def:affinity}
Let $\mathbf{M}^+(\mathbb{T}^d)$ be the set of all bounded positive Borel measures on $\mathbb{T}^d$. Let $P$, $Q \in \mathbf{M}^+(\mathbb{T}^d)$, let
$\nu \in \mathbf{M}^+(\mathbb{T}^d)$ such that $P$ and $Q$ are absolutely continuous with respect to $\nu$. Then {\rm the affinity} of $P$ and $Q$ is defined by
\be
\rho(P,Q) = \int_{\mathbb{T}^d}\left( \frac{dP}{d\nu}\right)^{1\slash 2} \left( \frac{dQ}{d\nu}\right)^{1\slash 2} d\nu.
\ee
\end{definition}
As in the one-parameter case, it is easy to check that  $\rho(P,Q)$ is independent of $\nu$ and $\rho(P,Q)=0$ if and only if $P$ and $Q$ are mutually singular, which is denoted by $P \bot Q$.

\begin{prop}\label{prop:approxaffinity}
Let $\{P_{\mathbf{n}}\}$, $\{Q_{\mathbf{n}}\}$ be two multi-parameter sequences of elements in $\mathbf{M}^+(\mathbb{T}^d)$, and $P$, $Q \in \mathbf{M}^+(\mathbb{T}^d)$. If the sequences  $\{P_{\mathbf{n}}\}$ and $\{Q_{\mathbf{n}}\}$ converge weakly to $P$ and $Q$ respectively, then
\be
\limsup_{\mathbf{n}} \rho(P_{\mathbf{n}},Q_{\mathbf{n}}) \le \rho(P,Q).
\ee
\end{prop}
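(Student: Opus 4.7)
The plan is to reduce the upper semicontinuity of $\rho$ under weak convergence to the Portmanteau theorem, via a variational characterization of affinity as an infimum over finite Borel partitions. The starting observation is that for any measurable partition $\{E_i\}_{i=1}^{N}$ of $\mathbb{T}^d$, applying the Cauchy--Schwarz inequality on each piece to $f=(dP/d\nu)^{1/2}$ and $g=(dQ/d\nu)^{1/2}$ yields
\[
\int_{E_i}\Bigl(\tfrac{dP}{d\nu}\Bigr)^{1/2}\Bigl(\tfrac{dQ}{d\nu}\Bigr)^{1/2} d\nu \;\le\; \sqrt{P(E_i)\,Q(E_i)}\,,
\]
so that $\rho(P,Q)\le \sum_i \sqrt{P(E_i)Q(E_i)}$. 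A standard simple-function approximation (refining the partition along level sets of $dP/d\nu$ and $dQ/d\nu$) shows the reverse inequality in the limit, giving the variational formula $\rho(P,Q)=\inf_{\{E_i\}}\sum_i \sqrt{P(E_i)Q(E_i)}$, where the infimum runs over all finite Borel partitions of $\mathbb{T}^d$.

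With this in hand, the strategy for the proposition is standard: given $\varepsilon>0$, I would first pick a finite partition $\{E_i\}$ with $\sum_i \sqrt{P(E_i)Q(E_i)}<\rho(P,Q)+\varepsilon$, and then pass this partition through the weak convergence. The same Cauchy--Schwarz bound applied to each $P_{\mathbf{n}},Q_{\mathbf{n}}$ gives $\rho(P_{\mathbf{n}},Q_{\mathbf{n}})\le \sum_i \sqrt{P_{\mathbf{n}}(E_i)Q_{\mathbf{n}}(E_i)}$, so if the pieces $E_i$ happen to be $P$- and $Q$-continuity sets, the Portmanteau theorem yields $P_{\mathbf{n}}(E_i)\to P(E_i)$ and $Q_{\mathbf{n}}(E_i)\to Q(E_i)$, whence
\[
\limsup_{\mathbf{n}}\rho(P_{\mathbf{n}},Q_{\mathbf{n}})\;\le\;\sum_{i}\sqrt{P(E_i)Q(E_i)}\;<\;\rho(P,Q)+\varepsilon,
\]
and letting $\varepsilon\to 0$ finishes the argument.

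The one technical point to ensure is that the approximating partition can be chosen to consist of continuity sets for both $P$ and $Q$. On $\mathbb{T}^d$ this is easy: I would use a product partition into dyadic-type rectangles obtained by cutting along finitely many ``grid'' hyperplanes, chosen so that none of the finitely many hyperplanes carries positive $P$- or $Q$-mass. Since each of $P$ and $Q$ can have at most countably many hyperplanes of positive mass, such a grid exists at arbitrarily fine mesh, and refining further allows the sum $\sum_i \sqrt{P(E_i)Q(E_i)}$ to approach $\rho(P,Q)$ thanks to the variational formula.

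The main obstacle I anticipate is not the weak-convergence step but the verification of the variational formula itself in the multi-parameter setting; however, this is purely a measure-theoretic statement on $\mathbb{T}^d$ and transfers verbatim from the one-parameter proof (for example by approximating $dP/d\nu$ and $dQ/d\nu$ in $L^1(\nu)$ by simple functions subordinate to a common refinement, and invoking the continuity of $(f,g)\mapsto \int \sqrt{fg}\,d\nu$ on $L^1_+(\nu)\times L^1_+(\nu)$). Once this is granted, the proposition follows along the lines sketched above.
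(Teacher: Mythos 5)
Your argument is correct, but it is worth noting that the paper itself gives no proof of this proposition (it defers to the one-parameter argument in the reference [CKF77]), and the route taken there --- which is reproduced in spirit inside the proof of Proposition 3.4 of this paper --- is slightly different from yours. There one works with a \emph{continuous} partition of unity $\sum_{l}f_l=1$, $f_l\ge 0$, built from the $(1+\delta)^l$ level sets of the densities $dP/d\nu$, $dQ/d\nu$ with $\nu=P+Q$: the Kadison--Schwarz/Cauchy--Schwarz step gives $\rho(P_{\mathbf n},Q_{\mathbf n})\le\sum_l\bigl(\int f_l\,dP_{\mathbf n}\bigr)^{1/2}\bigl(\int f_l\,dQ_{\mathbf n}\bigr)^{1/2}$, weak convergence applies \emph{directly} to the continuous test functions $f_l$, and near-optimality of the partition of unity is arranged as in display (3.2) of the paper. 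Your version with indicator functions of Borel continuity sets plus the Portmanteau theorem is an equivalent and equally standard variant, but it forces you to discharge two extra points: (i) that the grid hyperplanes can be chosen to carry no $P$- or $Q$-mass (which you handle correctly), and (ii) that grid partitions actually realize the infimum in your variational formula. Point (ii) does not follow formally from the formula $\rho(P,Q)=\inf_{\{E_i\}}\sum_i\sqrt{P(E_i)Q(E_i)}$ over \emph{all} partitions, since the level sets of the densities are not grid sets and a grid partition need not refine them; the clean justification is a conditional-expectation/refinement argument: taking $\nu=P+Q$ and a nested generating sequence of grids, $\sum_i\sqrt{P(E_i)Q(E_i)}=\int\sqrt{\mathbb{E}_\nu[f|\mathcal F_k]\,\mathbb{E}_\nu[g|\mathcal F_k]}\,d\nu\to\int\sqrt{fg}\,d\nu$ by martingale convergence and bounded convergence (the integrand is $\le 1/2$ since $f+g=1$). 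Your parenthetical $L^1$-approximation remark is pointing at the right fact, but it should be made precise along these lines. In exchange for this extra care, your proof is entirely self-contained measure theory on $\mathbb{T}^d$; the partition-of-unity version avoids Portmanteau and continuity sets altogether and is the form the paper actually reuses later.
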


Let the above measures be specialized to the ones arising from $\mathbf{a}$ and $\mathbf{b}$ in $S$, namely taking $P= \sigma_{\mathbf{a}}$, $Q= \sigma_{\mathbf{b}}$, $P_{\mathbf{n}}= \sigma_{\mathbf{a}}^{\mathbf{n}}$ and $Q_{\mathbf{n}}= \sigma_{\mathbf{b}}^{\mathbf{n}}$, we have the following corollary of Proposition \ref{prop:measureconvergence} and \ref{prop:approxaffinity}.

\begin{cor}\label{cor:sequenffinity}
Let $\mathbf{a}$ and $\mathbf{b} \in S$. Then
\be
\limsup_{\mathbf{n}} \frac{1}{|\mathbf{n+1}|} \left| \sum_{\mathbf{k}=\mathbf{0}}^{\mathbf{n}} a(\mathbf{k})\overline{b(\mathbf{k})} \right|
\le \rho(\sigma_{\mathbf{a}},\sigma_{\mathbf{b}}).
\ee
In particular,
\be
\lim_{\mathbf{n}} \frac{1}{|\mathbf{n+1}|} \sum_{\mathbf{k}=\mathbf{0}}^{\mathbf{n}} a(\mathbf{k})\overline{b(\mathbf{k})} = 0
\ee
if $\sigma_{\mathbf{a}}$ and $\sigma_{\mathbf{b}}$ are mutually singular.
\end{cor}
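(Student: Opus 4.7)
The plan is to sandwich the quantity $\frac{1}{|\mathbf{n+1}|}\big|\sum_{\mathbf{k}=\mathbf{0}}^{\mathbf{n}}a(\mathbf{k})\overline{b(\mathbf{k})}\big|$ between $0$ and the affinity $\rho(\sigma_{\mathbf{a}}^{\mathbf{n}},\sigma_{\mathbf{b}}^{\mathbf{n}})$ of the measures introduced in Definition \ref{def:n-measure}, and then pass to the limit using Propositions \ref{prop:measureconvergence} and \ref{prop:approxaffinity}.

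First I would compute $\rho(\sigma_{\mathbf{a}}^{\mathbf{n}},\sigma_{\mathbf{b}}^{\mathbf{n}})$ explicitly. Both $\sigma_{\mathbf{a}}^{\mathbf{n}}$ and $\sigma_{\mathbf{b}}^{\mathbf{n}}$ are absolutely continuous with respect to the normalized Haar measure $d\mathbf{z}$ on $\mathbb{T}^d$, with densities given by the square moduli in Definition \ref{def:n-measure}. Taking $\nu = d\mathbf{z}$ in Definition \ref{def:affinity}, the square roots cancel the squares and one obtains
\[
\rho(\sigma_{\mathbf{a}}^{\mathbf{n}},\sigma_{\mathbf{b}}^{\mathbf{n}}) = \frac{1}{|\mathbf{n+1}|} \int_{\mathbb{T}^d} \Bigl|\sum_{\mathbf{k}=\mathbf{0}}^{\mathbf{n}} a(\mathbf{k})\overline{\mathbf{z}}^{\mathbf{k}}\Bigr| \Bigl|\sum_{\mathbf{k}=\mathbf{0}}^{\mathbf{n}} b(\mathbf{k})\overline{\mathbf{z}}^{\mathbf{k}}\Bigr|\,d\mathbf{z}.
\]

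Next I would use orthogonality of the characters $\{\mathbf{z}^{\mathbf{k}}\}_{\mathbf{k}\in\mathbb{Z}^d}$ on $\mathbb{T}^d$ to identify the bilinear pairing: expanding and integrating term-by-term gives
\[
\int_{\mathbb{T}^d}\Bigl(\sum_{\mathbf{k}=\mathbf{0}}^{\mathbf{n}} a(\mathbf{k})\overline{\mathbf{z}}^{\mathbf{k}}\Bigr)\overline{\Bigl(\sum_{\mathbf{j}=\mathbf{0}}^{\mathbf{n}} b(\mathbf{j})\overline{\mathbf{z}}^{\mathbf{j}}\Bigr)}\,d\mathbf{z} \;=\; \sum_{\mathbf{k}=\mathbf{0}}^{\mathbf{n}} a(\mathbf{k})\overline{b(\mathbf{k})}.
\]
Applying the triangle inequality under the integral sign on the left yields
\[
\frac{1}{|\mathbf{n+1}|}\Bigl|\sum_{\mathbf{k}=\mathbf{0}}^{\mathbf{n}} a(\mathbf{k})\overline{b(\mathbf{k})}\Bigr| \;\le\; \rho(\sigma_{\mathbf{a}}^{\mathbf{n}},\sigma_{\mathbf{b}}^{\mathbf{n}}).
\]

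Finally, since $\mathbf{a},\mathbf{b}\in S$, Proposition \ref{prop:measureconvergence} tells us that $\sigma_{\mathbf{a}}^{\mathbf{n}} \to \sigma_{\mathbf{a}}$ and $\sigma_{\mathbf{b}}^{\mathbf{n}} \to \sigma_{\mathbf{b}}$ weakly, so Proposition \ref{prop:approxaffinity} gives $\limsup_{\mathbf{n}} \rho(\sigma_{\mathbf{a}}^{\mathbf{n}},\sigma_{\mathbf{b}}^{\mathbf{n}}) \le \rho(\sigma_{\mathbf{a}},\sigma_{\mathbf{b}})$, which, combined with the bound above, yields the first assertion. The ``in particular'' clause is then immediate: if $\sigma_{\mathbf{a}} \perp \sigma_{\mathbf{b}}$, then by the remark following Definition \ref{def:affinity} we have $\rho(\sigma_{\mathbf{a}},\sigma_{\mathbf{b}})=0$, forcing the limit of the (nonnegative) averaged sum to be $0$. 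There is no real obstacle here — the only step that requires any care is the orthogonality computation, where one must note that the index difference $\mathbf{j}-\mathbf{k}$ ranges only over $\{\mathbf{k}=\mathbf{j}\}$ inside the square $\{0,\ldots,\mathbf{n}\}^2$, so the cross terms vanish exactly.
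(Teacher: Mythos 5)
Your proposal is correct and follows exactly the route the paper intends: the corollary is stated as a direct specialization of Propositions \ref{prop:measureconvergence} and \ref{prop:approxaffinity} with $P_{\mathbf{n}}=\sigma_{\mathbf{a}}^{\mathbf{n}}$, $Q_{\mathbf{n}}=\sigma_{\mathbf{b}}^{\mathbf{n}}$, and your explicit computation of $\rho(\sigma_{\mathbf{a}}^{\mathbf{n}},\sigma_{\mathbf{b}}^{\mathbf{n}})$ via the Haar density, the orthogonality identity, and the triangle inequality is precisely the (omitted) verification that the averaged sum is dominated by that finite-stage affinity. Nothing is missing.
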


Specializing Corollary \ref{cor:sequenffinity} further---by taking $b(\mathbf{k})=\mathbf{z}^{\mathbf{k}}$ for $\mathbf{z} \in \mathbb{T}^d$, it is a direct calculation that the correlation $\gamma_{b}(\mathbf{m}) = z^{\mathbf{m}}$. So by the Fourier inverse transform we get the spectral measure $\sigma_{b}= \delta_{z}$, which is the Dirac measure at $z$. Then we have the following result.

\begin{cor}\label{cor:WW}
Let $\mathbf{a} \in S.$ Then for each $\mathbf{z} \in \mathbb{T}^d$ we have
\be
\limsup_{\mathbf{n}}\frac{1}{|\mathbf{n+1}|} \left| \sum_{\mathbf{k}=\mathbf{0}}^{\mathbf{n}} a(\mathbf{k})\overline{\mathbf{z}}^{\mathbf{k}} \right|
\le [ \sigma_{\mathbf{a}}(\{ \mathbf{z} \}) ]^{1\slash 2}.
\ee
\end{cor}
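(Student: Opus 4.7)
The plan is to derive Corollary \ref{cor:WW} as an immediate specialization of Corollary \ref{cor:sequenffinity}, by choosing a convenient companion weight $\mathbf{b}\in S$ whose spectral measure is the Dirac mass at $\mathbf{z}$. Fix $\mathbf{z}\in\mathbb{T}^d$ and set $b(\mathbf{k})=\mathbf{z}^{\mathbf{k}}$ for $\mathbf{k}\in\mathbb{N}^d$, extended by zero on $\mathbb{Z}^d\setminus\mathbb{N}^d$. The goal is then to feed $(\mathbf{a},\mathbf{b})$ into Corollary \ref{cor:sequenffinity}, after identifying $\sigma_{\mathbf{b}}$.

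First I would check that $\mathbf{b}\in S$ by a direct computation of the correlation. For any fixed $\mathbf{m}\in\mathbb{Z}^d$, the product $\overline{b(\mathbf{k})}\,b(\mathbf{k}+\mathbf{m})$ equals $\mathbf{z}^{\mathbf{m}}$ whenever both $\mathbf{k}$ and $\mathbf{k}+\mathbf{m}$ lie in $\mathbb{N}^d$ (since $\overline{\mathbf{z}}=\mathbf{z}^{-1}$ on $\mathbb{T}^d$), and vanishes otherwise. Using the second formulation in Definition \ref{def:SpaceS}, the two-sided sum from $-\mathbf{n}$ to $\mathbf{n}$ reduces to a product of intervals of length $n_j+1-\max\{0,-m_j\}$ in each coordinate; dividing by $|\mathbf{n}+1|$ and letting $\mathbf{n}\to\infty$, the ratio tends to $1$, so
\[
\gamma_{\mathbf{b}}(\mathbf{m})=\mathbf{z}^{\mathbf{m}},\qquad \mathbf{m}\in\mathbb{Z}^d.
\]
Comparing with the integral representation in Remark \ref{rk:spectralmeasure} and invoking uniqueness of the representing measure in Bochner's theorem, we conclude $\sigma_{\mathbf{b}}=\delta_{\mathbf{z}}$.

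Next I would compute the affinity $\rho(\sigma_{\mathbf{a}},\delta_{\mathbf{z}})$. Write $c:=\sigma_{\mathbf{a}}(\{\mathbf{z}\})$ and take $\nu:=\sigma_{\mathbf{a}}+\delta_{\mathbf{z}}$ as a common dominating measure. Since $\delta_{\mathbf{z}}$ is supported at $\{\mathbf{z}\}$, the integrand in Definition \ref{def:affinity} vanishes off $\{\mathbf{z}\}$. At the point $\mathbf{z}$ we have $\nu(\{\mathbf{z}\})=1+c$, $\tfrac{d\sigma_{\mathbf{a}}}{d\nu}(\mathbf{z})=\tfrac{c}{1+c}$, $\tfrac{d\delta_{\mathbf{z}}}{d\nu}(\mathbf{z})=\tfrac{1}{1+c}$, so
\[
\rho(\sigma_{\mathbf{a}},\delta_{\mathbf{z}})=\Bigl(\tfrac{c}{1+c}\Bigr)^{1/2}\Bigl(\tfrac{1}{1+c}\Bigr)^{1/2}(1+c)=c^{1/2}=[\sigma_{\mathbf{a}}(\{\mathbf{z}\})]^{1/2}.
\]

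Finally, applying Corollary \ref{cor:sequenffinity} to the pair $(\mathbf{a},\mathbf{b})$ and using $\overline{b(\mathbf{k})}=\overline{\mathbf{z}}^{\mathbf{k}}$ yields
\[
\limsup_{\mathbf{n}}\frac{1}{|\mathbf{n}+1|}\Bigl|\sum_{\mathbf{k}=\mathbf{0}}^{\mathbf{n}}a(\mathbf{k})\overline{\mathbf{z}}^{\mathbf{k}}\Bigr|\le \rho(\sigma_{\mathbf{a}},\sigma_{\mathbf{b}})=[\sigma_{\mathbf{a}}(\{\mathbf{z}\})]^{1/2},
\]
which is the desired inequality. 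No step is genuinely hard; the only mild care required is in the correlation computation, where one must keep track of the boundary terms in $\max\{0,-m_j\}\le k_j\le n_j$, and verify they are negligible after normalization by $|\mathbf{n}+1|$ as $\mathbf{n}\to\infty$.
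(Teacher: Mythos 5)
Your proposal is correct and follows essentially the same route as the paper, which also specializes Corollary \ref{cor:sequenffinity} to $b(\mathbf{k})=\mathbf{z}^{\mathbf{k}}$, computes $\gamma_{\mathbf{b}}(\mathbf{m})=\mathbf{z}^{\mathbf{m}}$ so that $\sigma_{\mathbf{b}}=\delta_{\mathbf{z}}$, and then reads off the bound from the affinity. Your explicit verification that $\rho(\sigma_{\mathbf{a}},\delta_{\mathbf{z}})=[\sigma_{\mathbf{a}}(\{\mathbf{z}\})]^{1/2}$ is a detail the paper leaves implicit, and it is carried out correctly.
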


Corollary \ref{cor:WW} is quite suggestive in deducing the following multi-parameter version of classical Wiener-Wintner's theorem. A $d$-parameter  commutative dynamical system is denoted by $(X,\mathcal{F},\mu,\mathbf{T})$, where $\mathbf{T}$ is a $d$-tuple of mutually commuting measure preserving transformations on finite measure space $X$. The $d$-parameter version of $\mathcal B$-WW type is defined in a similar way as  Definition \ref{def:classicalWW}.

\begin{prop}\label{prop:WWErgodic}
$\Lambda_d = \left\{ \{ \underline{\lambda}^{\mathbf{k}}\}_{\mathbf{k}\in \mathbb{N}^d}\ :  \underline{\lambda} \in \mathbb{T}^d \right\}$ is of $L_1\log^{d-1}L$-WW type.

\end{prop}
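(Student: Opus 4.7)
I would follow Bellow and Losert's one-parameter strategy, using Corollary \ref{cor:WW} as the key spectral input. The plan has three pieces: a density reduction, a.e.\ convergence for every $\underline{\lambda}\in\mathbb{T}^d$ outside a random countable atomic set coming from Corollary \ref{cor:WW}, and absorption of that random set into a deterministic countable set handled by a direct ergodic theorem.

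Since $L_\infty(X,\mu)$ is dense in $L_1\log^{d-1}L(X,\mu)$ and
$$\sup_{\mathbf{n},\underline{\lambda}}\left|M_\mathbf{n}(f,\underline{\lambda})(x)\right|\le\sup_{\mathbf{n}}\frac{1}{|\mathbf{n}+\mathbf{1}|}\sum_{\mathbf{k}=\mathbf{0}}^{\mathbf{n}}|f|(\mathbf{T}^\mathbf{k}x),$$
Fava's multi-parameter weak-type $(L_1\log^{d-1}L,L_1)$ maximal inequality together with the Banach principle reduces the problem to proving WW convergence for $f\in L_\infty$. Fix such an $f$ and set $a_f^x(\mathbf{k})=f(\mathbf{T}^\mathbf{k}x)$. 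For each $\mathbf{m}\in\mathbb{Z}^d$ the function $\overline{f}\cdot(f\circ\mathbf{T}^\mathbf{m})$ is bounded, so the classical multi-parameter Dunford-Schwartz individual ergodic theorem produces, outside a null set, the correlation
$$\gamma_{a_f^x}(\mathbf{m})=\lim_{\mathbf{n}}\frac{1}{|\mathbf{n}+\mathbf{1}|}\sum_{\mathbf{k}=\mathbf{0}}^{\mathbf{n}}\overline{f(\mathbf{T}^\mathbf{k}x)}f(\mathbf{T}^{\mathbf{k}+\mathbf{m}}x).$$
Intersecting over the countable collection $\{\mathbf{m}\in\mathbb{Z}^d\}$ yields a full-measure set $X_f^{(1)}$ on which $a_f^x\in S$ in the sense of Definition \ref{def:SpaceS}, so the spectral measure $\sigma_{a_f^x}$ is defined. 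Corollary \ref{cor:WW} applied with $\mathbf{a}=a_f^x$ and $\mathbf{z}=\overline{\underline{\lambda}}$ then delivers
$$\limsup_{\mathbf{n}}\left|M_\mathbf{n}(f,\underline{\lambda})(x)\right|\le\sigma_{a_f^x}(\{\overline{\underline{\lambda}}\})^{1/2},$$
so $M_\mathbf{n}(f,\underline{\lambda})(x)\to 0$ whenever $\overline{\underline{\lambda}}$ is not an atom of $\sigma_{a_f^x}$.

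The expected main obstacle is to show that the random atomic set $A(x)=\{\underline{\lambda}\in\mathbb{T}^d:\sigma_{a_f^x}(\{\overline{\underline{\lambda}}\})>0\}$ is almost surely contained in a deterministic countable subset $\Sigma\subset\mathbb{T}^d$. I would take $\Sigma$ to be the joint point spectrum of the commuting unitary tuple $(U_{T_1},\ldots,U_{T_d})$ on $L_2(X,\mu)$, which is countable by separability of $L_2$ and orthogonality of distinct joint eigenspaces. Decomposing $f=f_\mathcal{K}+f_{\mathcal{K}^\perp}$ along the closed span $\mathcal{K}$ of joint eigenfunctions and its orthogonal complement, the joint spectral theorem together with a multi-parameter analogue of the Koopman--von Neumann lemma shows that $\sigma_{a_{f_{\mathcal{K}^\perp}}^x}$ is atom-free almost surely, while atoms of $\sigma_{a_{f_\mathcal{K}}^x}$ lie in $\Sigma$ by construction. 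Once this inclusion is secured, for each $\underline{\mu}\in\Sigma$ the tuple $(\mu_j T_j)_{j=1}^d$ is again a commuting Dunford-Schwartz family, so the multi-parameter pointwise ergodic theorem yields a.e.\ convergence of $M_\mathbf{n}(f,\overline{\underline{\mu}})(x)$; a countable union over $\underline{\mu}\in\Sigma$ removes only a null exceptional set. Combining all three full-measure sets and undoing the density reduction via the Banach principle completes the proof.
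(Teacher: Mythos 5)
Your proposal is correct and rests on the same pillars as the paper's proof --- the correlation and spectral measure of the orbit sequence $\{f(\mathbf{T}^{\mathbf{k}}x)\}$ obtained a.e.\ from the individual ergodic theorem, the affinity estimate (Corollary \ref{cor:sequenffinity} / Corollary \ref{cor:WW}), the split along the closed span of joint eigenfunctions, and the maximal inequality plus Banach principle --- but it is organized differently. The paper decomposes the \emph{function}: it takes the dense class $E+V$, where $V$ consists of $g$ with continuous spectral measure, observes that for $g\in E$ the averages $M_{\mathbf{n}}(g,\underline{\lambda})$ converge trivially for every $\underline{\lambda}$ and every point, and that for $g\in V$ Corollary \ref{cor:sequenffinity} forces the limit $0$ for \emph{all} $\underline{\lambda}$ simultaneously, since a continuous measure is singular to every $\delta_{\underline{\lambda}}$; no atoms ever appear. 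You instead keep $f\in L_\infty$ whole and decompose the \emph{frequency set}: Corollary \ref{cor:WW} kills all $\underline{\lambda}$ off the atoms of $\sigma_{a_f^x}$, and you then must (i) show the random atom set sits inside the deterministic countable joint point spectrum $\Sigma$, and (ii) invoke the multi-parameter pointwise ergodic theorem for the modulated Dunford--Schwartz tuple $(\mu_jT_j)_j$ for each $\underline{\mu}\in\Sigma$. These two extra ingredients are genuine but standard; note that (i) is exactly the commutative analogue of Proposition \ref{prop:continuousmeasure}, which is also what hides behind the paper's one-line claim that $E+V$ is dense, so neither write-up escapes proving it. The paper's arrangement is slightly leaner because the eigenfunction part is handled by an explicit geometric-series computation rather than by another appeal to the ergodic theorem.

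Two small points to tighten. First, include the reduction to the ergodic case (as the paper does via ergodic decomposition): without it, $\gamma_{a_f^x}(\mathbf{m})$ is the conditional expectation of $\bar f\cdot(f\circ\mathbf{T}^{\mathbf{m}})$ on invariants evaluated at $x$ rather than $\langle\mathbf{T}^{\mathbf{m}}f,f\rangle$, so $\sigma_{a_f^x}$ genuinely depends on $x$ and your identification of its atoms with the joint point spectrum needs to be run fiberwise. Second, since $\mathcal{K}$ and $\mathcal{K}^{\perp}$ are jointly invariant, the cross-correlations vanish and $\sigma_{a_f^x}=\sigma_{a_{f_{\mathcal{K}}}^x}+\sigma_{a_{f_{\mathcal{K}^\perp}}^x}$; you should say this explicitly, as it is what lets you read off the atoms of the sum from those of the two pieces.
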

\begin{proof}
Fix any $d$-parameter commutative dynamical system $(X,\mathcal{F},\mu,\mathbf{T})$. Using decomposition into ergodic parts, we can assume the dynamical system is ergodic. For any $\underline{\lambda}\in \mathbb{T}^d$, we have $\sigma_{\{\underline{\lambda} ^{\mathbf{k}}\}}(\mathbf{m}) = \delta_{\underline{\lambda}}$. Now let $g \in L_2(\mu)$, from the individual ergodic theorem, we know that there exists a set $X_g$ of full measure satisfying $$\lim_{\mathbf{n}}\frac{1}{\mathbf{n+1}} \sum_{\mathbf{k}=0}^{\mathbf{n}}g(\mathbf{T^{k+m}}\omega)\bar{g}(\mathbf{T^k}\omega)=\langle \mathbf{T^m}g,g \rangle$$ for all $\omega \in X_g$ and all $\mathbf{m}\in \mathbb{Z}^d.$ So for all $\omega\in X_g$, the sequence $\{g(\mathbf{T^k}\omega)\}$ has a correlation and we have the same spectral measure $\sigma_g$.
Then let $E$ be the span of eigenfunctions of $\mathbf{T}$, and let $V = \{g\in L_2(\mu): \sigma_g \text{ is continuous}\}$. Clearly $E + V$ is dense in $L_2(\mu).$ On the other hand, obviously for $g\in E$, the limit
$$\lim_{\mathbf{n}}\frac{1}{\mathbf{n+1}} \sum_{\mathbf{k}=0}^{\mathbf{n}}\underline{\lambda}^{\mathbf{k}}g(\mathbf{T^k}\omega) \;\mathrm{exists}\;\mathrm{a.e.}$$ And for $g\in V$, by Corollary \ref{cor:sequenffinity}  $$\lim_{\mathbf{n}}\frac{1}{\mathbf{n+1}} \sum_{\mathbf{k}=0}^{\mathbf{n}}\underline{\lambda}^{\mathbf{k}}g(\mathbf{T^k}\omega) = 0$$ for all $\omega \in X_g$. Now since in this classical multi-parameter case the maximal inequality and Banach principle are well known results, we deduce that $\Lambda_d$ is of $L_1\log^{d-1}L$-WW type.
\end{proof}

\subsection{Correlation and spectral measure associated to an operator in a dynamical system}
Let $(\M, \tau, \mathbf{T})$ be a finite trace preserving system. Since all $T_j (j=1,\cdots,d)$ are unitaries on $L_2(\M)$, for $x\in L_2(\M)$, we can consider the sequence of operators $\{\mathbf{T}^{\mathbf{k}}x\}_{\mathbf{k}\in \mathbb{Z}^d}$.

\begin{definition}\label{def:noncommucorrelation}
Given any $x \in L_2(\mathcal{M})$, we define the correlation as
\be
\gamma_x(\mathbf{m}) = (\mathbf{T}^{\mathbf{m}} x,x )_{\tau}=\tau((\mathbf{T}^{\mathbf{m}} x)^*x), \ \ \ \ \ \mathbf{m}\in \mathbb{Z}^d.
\ee
\end{definition}

\begin{lem}\label{lem:noncommuposidefi}
Let $x \in L_2(\mathcal{M})$, then its correlation $\gamma_x(\mathbf{m})$ is a positive definite function on $\mathbb{Z}^d$, thus there exists
a positive Borel measure $\sigma_x$ on $\mathbb{T}^d$ such that
\be
\gamma_x(\mathbf{m})= \widehat{\sigma_x}(\mathbf{m})=\int_{\mathbb{T}^d}\mathbf{z}^{\mathbf{m}}d\sigma_x(\mathbf{z}).
\ee
\end{lem}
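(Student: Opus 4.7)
The plan is to verify positive definiteness of $\gamma_x$ directly from the fact that each $T_j$ is a trace-preserving $\ast$-automorphism, and then invoke Bochner's theorem on the discrete abelian group $\mathbb{Z}^d$ (whose dual is $\mathbb{T}^d$) to produce $\sigma_x$.

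First I would rewrite $\gamma_x$ in a more symmetric form. Because $\mathbf{T}^{\mathbf{m}_j}$ is a trace-preserving $\ast$-automorphism, for any $\mathbf{m}_i,\mathbf{m}_j\in\mathbb{Z}^d$ one has
\begin{equation*}
\gamma_x(\mathbf{m}_i-\mathbf{m}_j)
= \tau\bigl((\mathbf{T}^{\mathbf{m}_i-\mathbf{m}_j}x)^{*}x\bigr)
= \tau\bigl(\mathbf{T}^{\mathbf{m}_j}\bigl((\mathbf{T}^{\mathbf{m}_i-\mathbf{m}_j}x)^{*}x\bigr)\bigr)
= \tau\bigl((\mathbf{T}^{\mathbf{m}_i}x)^{*}\,\mathbf{T}^{\mathbf{m}_j}x\bigr),
\end{equation*}
where I used that $\mathbf{T}^{\mathbf{m}_j}$ commutes with the involution and is multiplicative.

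Next I would check positive definiteness. Fix $N\ge 1$, $\mathbf{m}_1,\dots,\mathbf{m}_N\in\mathbb{Z}^d$ and $c_1,\dots,c_N\in\mathbb{C}$, and set $y=\sum_{j=1}^N \overline{c_j}\,\mathbf{T}^{\mathbf{m}_j}x\in L_2(\M)$. The identity above gives
\begin{equation*}
\sum_{i,j=1}^{N} c_i\,\overline{c_j}\,\gamma_x(\mathbf{m}_i-\mathbf{m}_j)
= \sum_{i,j=1}^{N}\overline{c_j}\,c_i\,\tau\bigl((\mathbf{T}^{\mathbf{m}_i}x)^{*}\mathbf{T}^{\mathbf{m}_j}x\bigr)
= \tau(y^{*}y)=\|y\|_2^{2}\ge 0,
\end{equation*}
which is precisely the definition of a positive definite function on $\mathbb{Z}^d$. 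In addition, by the Cauchy--Schwarz inequality in $L_2(\M)$ and the trace preservation, $|\gamma_x(\mathbf{m})|\le\|\mathbf{T}^{\mathbf{m}}x\|_2\|x\|_2=\|x\|_2^{2}$, so $\gamma_x$ is bounded.

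Finally, Bochner's theorem for the discrete abelian group $\mathbb{Z}^d$ (see \cite{Rudin1962}) asserts that every bounded positive definite function on $\mathbb{Z}^d$ is the Fourier--Stieltjes transform of a unique finite positive Borel measure on the dual group $\mathbb{T}^d$. Applying this to $\gamma_x$ produces the desired $\sigma_x\in\mathbf{M}^{+}(\mathbb{T}^d)$ with $\gamma_x(\mathbf{m})=\widehat{\sigma_x}(\mathbf{m})=\int_{\mathbb{T}^d}\mathbf{z}^{\mathbf{m}}\,d\sigma_x(\mathbf{z})$. There is no real obstacle here; the only point that needs care is the identity $\gamma_x(\mathbf{m}_i-\mathbf{m}_j)=\tau((\mathbf{T}^{\mathbf{m}_i}x)^{*}\mathbf{T}^{\mathbf{m}_j}x)$, which crucially uses that $\mathbf{T}$ is a tuple of commuting $\ast$-automorphisms (not merely positive contractions), since the argument rewrites $(\mathbf{T}^{\mathbf{m}_i-\mathbf{m}_j}x)^{*}$ as $\mathbf{T}^{-\mathbf{m}_j}((\mathbf{T}^{\mathbf{m}_i}x)^{*})$.
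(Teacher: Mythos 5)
Your proof is correct and follows essentially the same route as the paper: both verify positive definiteness by using that each $\mathbf{T}^{\mathbf{m}_j}$ is a trace-preserving $\ast$-automorphism to rewrite $\gamma_x(\mathbf{m}_i-\mathbf{m}_j)$ as the $L_2$ inner product $\tau\bigl((\mathbf{T}^{\mathbf{m}_i}x)^{*}\mathbf{T}^{\mathbf{m}_j}x\bigr)$, collapse the quadratic form to $\|y\|_2^2\ge 0$, and then invoke Bochner's theorem on $\mathbb{Z}^d$. Your version merely makes explicit two points the paper leaves implicit (the symmetrization identity and the boundedness $|\gamma_x(\mathbf{m})|\le\|x\|_2^2$ needed for Bochner), which is fine.
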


\begin{proof}
Let $\forall~ z_1,z_2,\cdots,z_N \in \mathbb{C},~ \forall~ \mathbf{m}_1,\mathbf{m}_2, \cdots, \mathbf{m}_N \in \mathbb{Z}^d,$ and $N$ a positive integer, we have
\be\begin{split}
\sum_{i=1}^N \sum_{j=1}^N \overline{z}_i z_j \gamma_x ( \mathbf{m}_i - \mathbf{m}_j )&=\sum_{i=1}^N \sum_{j=1}^N \overline{z}_i z_j ( \mathbf{T}^{\mathbf{m}_i} x , \mathbf{T}^{\mathbf{m}_j} x )_{\tau}\\
&=( \sum_{i=1}^N z_i \mathbf{T}^{\mathbf{m}_i} x , \sum_{j=1}^N z_j \mathbf{T}^{\mathbf{m}_j} x )_{\tau}\geq0,
\end{split}\ee
which implies the positive definiteness by definition. The existence of a spectral measure comes from Bochner's theorem.
\end{proof}

Let $x \in L_2(\mathcal{M})$. For each $\mathbf{n}\in \mathbb{N}^d $, we define
the $L^+_1(\M)$-valued function associated with $x$ as $ q_x^{\mathbf{n}}(\mathbf{z}) = \frac{1}{|\mathbf{n+1}|} \left| \sum_{\mathbf{k}=\mathbf{0}}^{\mathbf{n}} \mathbf{T}^{\mathbf{k}} (x) {\mathbf{z}}^{\mathbf{k}} \right|^2$, which is obviously continuous on $\mathbb{T}^d$.
Consequently it yields a $L^+_1(\M)$-valued Borel measure $\sigma_{x}^{\mathbf{n}}$ on $\mathbb{T}^d$ by
\be
\sigma_x^{\mathbf{n}}(E)= \int_E q_x^{\mathbf{n}}(\mathbf{z}) d\mathbf{z}, \ \ \ \ \ \ E \subset \mathbb{T}^d \text{ any Borel set.}
\ee
As usual, we will write $d \sigma_x^{\mathbf{n}}(\mathbf{z})= q_x^{\mathbf{n}}(\mathbf{z}) d \mathbf{z}$.

\section{proof of Theorem \ref{th:Main 1}}
As usual, we will deduce the desired bilateral almost uniform convergence for all operators in $L_1\log^{2{d-1}}L$ using the Banach principle. That is, we will first prove the bilateral almost uniform convergence for operators from a dense subspace of $L_1\log^{2{d-1}}L$, then establish maximal inequality on the space $L_1\log^{2{d-1}}L$, which will allow us to apply the Banach principle. In the present case, the Banach principle we need is slightly different from the previous known ones, so we formulate it in Lemma \ref{lem:densetoall}, and Theorem \ref{th:Main 1} follows immediately. Let us start with the a.u convergence for operators in a dense subset.

\subsection{Almost uniform convergence for operators from a dense subset} In the Wiener-Wintner type theorem, this part is considered as the most difficult part of the whole proof. For a given ergodic dynamical system$(\M,\tau,\mathbf{T})$, we will make use of the Kronecker factor $\mathcal{K}=\overline{E}^{\|\cdot\|_2}$, which has been identified in the statement of Theorem \ref{th:Main  2} in the Introduction, and break the operator in $L_2(\M)$ in order to prove the a.u. convergence independently for the eigenfunctions and those functions in the orthocomplement of the Kronecker factor. Due to the noncommutativity, the dense subset of $L_1\log^{2{d-1}}L(\M)$ that we construct is $L_\8(\M)\cap (E\oplus \mathcal{K}^{\perp})$. Yet we need a lemma in advance which concerns the weak convergence of operator valued measure sequence $\{\sigma_x^{\mathbf{n}}\}$.

\begin{lem}\label{lem:noncommuweakconverge} Given an ergodic dynamical system $(\M, \tau, \mathbf{T})$ and let $x \in L_{\8}(\mathcal{M})\subset L_2(\mathcal{M})$. Then for any $f\in \mathcal{C}(\mathbb{T}^d)$ (continuous complex functions on $d$ torus), we have
$$\left\{\int_{\mathbb{T}^d} f(\mathbf{z}) d \sigma_x^{\mathbf{n}}(\mathbf{z}) \right\}\text{ converges b.a.u. to}  \int_{\mathbb{T}^d} f(\mathbf{z}) d \sigma_x(\mathbf{z}) \cdot \mathbb{I}.$$
\end{lem}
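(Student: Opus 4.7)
My plan is to reduce the b.a.u.\ convergence to the case of characters $f(\mathbf{z}) = \mathbf{z}^{\mathbf{m}}$, then extend by linearity to trigonometric polynomials, and finally pass to general continuous $f$ by uniform polynomial approximation.

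Starting with a character $f(\mathbf{z}) = \mathbf{z}^{\mathbf{m}}$, I would expand $q_x^{\mathbf{n}}(\mathbf{z})$ as a double sum, integrate term by term using the orthogonality $\int_{\mathbb{T}^d} \mathbf{z}^{\mathbf{j}} d\mathbf{z} = \d_{\mathbf{j},\mathbf{0}}$, and exploit that each $\mathbf{T}^{\mathbf{l}}$ is a $\ast$-automorphism to obtain
\begin{equation*}
\int_{\mathbb{T}^d} \mathbf{z}^{\mathbf{m}}\, d\sigma_x^{\mathbf{n}}(\mathbf{z}) = \frac{1}{|\mathbf{n+1}|} \sum_{\mathbf{l}\in R_{\mathbf{n},\mathbf{m}}} \mathbf{T}^{\mathbf{l}}\bigl((\mathbf{T}^{\mathbf{m}} x)^* x\bigr),
\end{equation*}
where $R_{\mathbf{n},\mathbf{m}}$ is the intersection $[\mathbf{0},\mathbf{n}] \cap ([\mathbf{0},\mathbf{n}]-\mathbf{m})$. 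This set differs from $[\mathbf{0},\mathbf{n}]$ only by a boundary of cardinality at most $\sum_i |m_i|\prod_{j\ne i}(n_j+1)$, so the induced discrepancy is controlled in operator norm by $\bigl(\sum_i |m_i|/(n_i+1)\bigr)\|x\|_\8^2$ and tends to $0$. The principal piece is exactly $M_{\mathbf{n}}\bigl((\mathbf{T}^{\mathbf{m}}x)^*x\bigr)$, and since $(\mathbf{T}^{\mathbf{m}}x)^*x \in L_\8(\M)$ and the system is ergodic, the multi-parameter noncommutative Dunford--Schwartz ergodic theorem (obtained by iterating the one-parameter result of Junge--Xu) yields b.a.u.\ convergence to $\tau((\mathbf{T}^{\mathbf{m}}x)^*x)\cdot\mathbb{I} = \gamma_x(\mathbf{m})\cdot\mathbb{I} = \int_{\mathbb{T}^d}\mathbf{z}^{\mathbf{m}}\, d\sigma_x\cdot\mathbb{I}$, the last equality being Lemma~\ref{lem:noncommuposidefi}. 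Taking the finite intersection of the projections associated with each character extends the conclusion to any trigonometric polynomial $P$.

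For a general $f \in \mathcal{C}(\mathbb{T}^d)$, fix $\e > 0$ and, by Stone--Weierstrass (or Fej\'er sums on $\mathbb{T}^d$), pick a trigonometric polynomial $P$ with $\|f - P\|_\8$ as small as needed. Decomposing
\begin{equation*}
\int f\, d\sigma_x^{\mathbf{n}} - \int f\, d\sigma_x\cdot \mathbb{I} = \int(f-P)\, d\sigma_x^{\mathbf{n}} + \Bigl(\int P\, d\sigma_x^{\mathbf{n}} - \int P\, d\sigma_x\cdot\mathbb{I}\Bigr) + \int(P-f)\, d\sigma_x\cdot\mathbb{I},
\end{equation*}
the middle term converges b.a.u.\ to $0$ by the polynomial case; the third term is a scalar bounded by $\|f-P\|_\8\,\gamma_x(\mathbf{0}) \le \|f-P\|_\8\|x\|_\8^2$; and the first term is estimated by splitting $f-P$ into real and imaginary parts and using the pointwise operator inequality $\pm\mathrm{Re}(f-P)(\mathbf{z})\, q_x^{\mathbf{n}}(\mathbf{z}) \le \|f-P\|_\8\, q_x^{\mathbf{n}}(\mathbf{z})$ (and similarly for the imaginary part), producing the uniform bound $2\|f-P\|_\8\,\|\sigma_x^{\mathbf{n}}(\mathbb{T}^d)\|_\8 = 2\|f-P\|_\8\,\|M_{\mathbf{n}}(x^*x)\|_\8 \le 2\|f-P\|_\8\,\|x\|_\8^2$, where the identity $\sigma_x^{\mathbf{n}}(\mathbb{T}^d) = M_{\mathbf{n}}(x^*x)$ follows from a direct integration of $q_x^{\mathbf{n}}$. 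The crux of the argument, and the step I expect to be most delicate, is precisely this uniform-in-$\mathbf{n}$ control: it rests entirely on the deterministic bound $\|M_{\mathbf{n}}(x^*x)\|_\8 \le \|x\|_\8^2$, which is exactly why the hypothesis $x \in L_\8(\M)$ is imposed --- for $x$ merely in $L_2(\M)$ no such estimate is available and one would be forced to rely on noncommutative maximal inequalities instead.
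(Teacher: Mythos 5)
Your overall strategy is the same as the paper's: compute $\int_{\mathbb{T}^d}\mathbf{z}^{\mathbf{m}}\,d\sigma_x^{\mathbf{n}}$, recognize it (up to a boundary error) as an ergodic average of the fixed element $(\mathbf{T}^{\mathbf{m}}x)^*x$, apply the multi-parameter Junge--Xu individual ergodic theorem together with ergodicity to get almost uniform convergence to $\gamma_x(\mathbf{m})\cdot\mathbb{I}=\widehat{\sigma_x}(\mathbf{m})\cdot\mathbb{I}$, and then pass from trigonometric polynomials to general $f$ by uniform approximation. Your explicit boundary count and the deterministic bound $\|\sigma_x^{\mathbf{n}}(\mathbb{T}^d)\|_\8=\|M_{\mathbf{n}}(x^*x)\|_\8\le\|x\|_\8^2$ (which needs no projection at all) are, if anything, slightly cleaner than the corresponding steps in the paper, which instead controls $\sup_{\mathbf{n}}\|e\,\sigma_x^{\mathbf{n}}(\mathbb{T}^d)\,e\|_\8$ through the $\mathbf{m}=\mathbf{0}$ case of the character convergence.

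The one genuine gap is the uniformity of the projection in the approximation step. As written, you produce, for each trigonometric polynomial $P$, a projection $e_P$ (a finite intersection over the characters occurring in $P$) along which $e_P\big(\int P\,d\sigma_x^{\mathbf{n}}-\int P\,d\sigma_x\cdot\mathbb{I}\big)e_P\to0$; your three-term decomposition then yields only
$$\limsup_{\mathbf{n}}\Big\|e_P\Big(\int f\,d\sigma_x^{\mathbf{n}}-\int f\,d\sigma_x\cdot\mathbb{I}\Big)e_P\Big\|_\8\le C\,\|f-P\|_\8\,\|x\|_\8^2.$$
This is a small limsup attached to a projection that changes with $P$; it is not yet b.a.u.\ convergence, which requires a single projection $e$ (for each $\e$) along which the limit is exactly $0$. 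The fix is routine and is precisely what the paper does: choose for each $\mathbf{m}\in\mathbb{Z}^d$ a projection $e_{\mathbf{m}}$ with $\tau(e_{\mathbf{m}}^\perp)\le 3^{-d}\,\e\,2^{-|m_1|\cdots|m_d|}$ and set $e=\bigwedge_{\mathbf{m}}e_{\mathbf{m}}$, so that one projection works simultaneously for all characters, hence for every trigonometric polynomial, after which the approximation $\|f-P\|_\8\to0$ can be carried out with $e$ fixed. (A countable intersection over a sequence $P_l\to f$ would serve equally well.) With that adjustment your argument is complete and coincides with the paper's proof.
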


\begin{proof}
Step 1. We calculate the Fourier-Stieltjes transform of $\sigma_x^{\mathbf{n}}$.
\be\begin{split}
\widehat{\sigma_x^{\mathbf{n}}}(\mathbf{m}) & = \int_{\mathbb{T}^d}\mathbf{z}^{\mathbf{m}}d\sigma_x^{\mathbf{n}}(\mathbf{z})\\
& = \int_{\mathbb{T}^d}\mathbf{z}^{\mathbf{m}} \frac{1}{|\mathbf{n+1}|} \left| \sum_{\mathbf{k}=\mathbf{0}}^{\mathbf{n}} \mathbf{T}^{\mathbf{k}} (x) {\mathbf{z}}^{\mathbf{k}} \right|^2 d \mathbf{z}\\
& = \frac{1}{|\mathbf{n+1}|} \int_{\mathbb{T}^d} \Bigg( \sum_{\mathbf{j}=\mathbf{0}}^{\mathbf{n}} \mathbf{T}^{\mathbf{j}} (x^{\ast}) \overline{\mathbf{z}}^{\mathbf{j}} \Bigg) \Bigg( \sum_{\mathbf{k}=\mathbf{0}}^{\mathbf{n}} \mathbf{T}^{\mathbf{k}} (x) {\mathbf{z}}^{\mathbf{k}} \Bigg)
\mathbf{z}^{\mathbf{m}} d \mathbf{z}\\
& = \frac{1}{|\mathbf{n+1}|} \int_{\mathbb{T}^d} \sum_{\mathbf{j}=\mathbf{0}}^{\mathbf{n}} \sum_{\mathbf{k}=\mathbf{0}}^{\mathbf{n}}
\mathbf{T}^{\mathbf{j}} (x^{\ast}) \mathbf{T}^{\mathbf{k}} (x) \mathbf{z}^{\mathbf{m}+\mathbf{k}-\mathbf{j}} d \mathbf{z}\\
& = \frac{1}{|\mathbf{n+1}|} \int_{\mathbb{T}^d} \sum_{\mathbf{k}=\mathbf{0}}^{\mathbf{n}} \sum_{\mathbf{i}=\mathbf{-k}}^{\mathbf{n-k}}
\mathbf{T}^{\mathbf{k+i}} (x^{\ast}) \mathbf{T}^{\mathbf{k}} (x) \mathbf{z}^{\mathbf{m}-\mathbf{i}} d \mathbf{z}\\
& = \frac{1}{|\mathbf{n+1}|} \sum_{\mathbf{0} \le \mathbf{k} \le \mathbf{n} \atop \mathbf{-m} \le \mathbf{k} \le \mathbf{n-m} }
\mathbf{T}^{\mathbf{k+ m}} (x^{\ast}) \mathbf{T}^{\mathbf{k}} (x).
\end{split}\ee
Here, we only deal with the case $m_1 \le 0 , m_2, \cdots, m_d \ge 0$, and the other cases are of the same nature. We calculate the correlation in the following,
\be\begin{split}
\widehat{\sigma_x^{\mathbf{n}}}(\mathbf{m})& = \frac{1}{|\mathbf{n+1}|} \sum_{ k_1= -m_1 }^{ n_1 } \sum_{ k_2 = 0}^{ n_2 -m_2  } \cdots \sum_{ k_d = 0}^{ n_d -m_d } \mathbf{T}^{\mathbf{k+m}} (x^{\ast}) \mathbf{T}^{\mathbf{k}} (x) \\
& = \frac{1}{|\mathbf{n+1}|} \sum_{ j_1= 0 }^{ n_1-|m_1| } \sum_{ j_2 = 0}^{ n_2 -m_2 } \cdots \sum_{ j_d = 0}^{ n_d -m_d } T_1^{j_1}T_2^{j_2+m_2}\cdots
T_d^{j_d+m_d} (x^{\ast})~ T_1^{j_1+|m_1|}T_2^{j_2}\cdots T_d^{j_d} (x)\\
& = \frac{1}{|\mathbf{n+1}|} \sum_{ j_1= 0 }^{ n_1-|m_1| } \sum_{ j_2 = 0}^{ n_2 -m_2 } \cdots \sum_{ j_d = 0}^{ n_d -m_d } \mathbf{T}^{\mathbf{j}}\Bigg(
T_2^{m_2}\cdots T_d^{m_d}(x^{\ast})~ T_1^{|m_1|}(x)\Bigg).
\end{split}\ee
Thus by the multi-parameter individual ergodic theorem \cite{JX2007}, $\widehat{\sigma_x^{\mathbf{n}}}(\mathbf{m})$ a.u. converges to
\be\begin{split}
\tau\Bigg( T_2^{m_2}\cdots T_d^{m_d}(x^{\ast})~ T_1^{|m_1|}(x)\Bigg) \cdot \mathbb{I} = \gamma_x(\mathbf{m})\cdot \mathbb{I}=\widehat{\sigma_x}(\mathbf{m})\cdot \mathbb{I}.
\end{split}\ee

We restate the previous conclusion: Given $ \mathbf{m}\in \mathbb{Z}^d $, for any $\ep > 0$, there exists $e_{\mathbf{m}} \in P(\M)$, such that$$ \tau(e_{\mathbf{m}}^{\bot}) < \frac{1}{3^d}\cdot\frac{\ep }{ 2^{|m_1|\cdot|m_2| \cdots |m_d|}} $$ and $$ \lim_{\mathbf{n}} \bigg\|  \big[ \widehat{\sigma_x^{\mathbf{n}}}(\mathbf{m}) - \widehat{\sigma_x}(\mathbf{m})\cdot \mathbb{I} \big] e_{\mathbf{m}} \bigg\|_{\8} = 0 .$$
Take $e=\wedge_{\mathbf{m}=-\8}^{+\8}e_{\mathbf{m}}$, then
\be\begin{split}
\tau(e^{\bot}) & < \lim_{\mathbf{N}}\sum_{\mathbf{m}=-\mathbf{N}}^{\mathbf{N}}\frac{1}{3^d}\cdot\frac{\ep }{ 2^{|m_1|\cdot|m_2| \cdots |m_d|}}\\
& = \lim_{\mathbf{N}} \sum_{m_1=-N_1}^{N_1} \sum_{m_2=-N_2}^{N_2} \cdots \sum_{m_d=-N_d}^{N_d}\frac{1}{3^d}\cdot\frac{\ep }{ 2^{|m_1|\cdot|m_2| \cdots |m_d|}}\\
& = \ep
\end{split}\ee
and for all $ \mathbf{m}\in \mathbb{Z}^d $,
\begin{align}\label{identity}
 \lim_{\mathbf{n}} \bigg\|  \big[ \widehat{\sigma_x^{\mathbf{n}}}(\mathbf{m}) - \widehat{\sigma_x}(\mathbf{m})\cdot \mathbb{I} \big] e \bigg\|_{\8} = 0 .
\end{align}

Step 2.
From Step 1, we know immediately that the desired result holds when $f$ is a trigonometric polynomial.
For general continuous function, we proceed with an approximation argument.

Fix $f\in \mathcal{C}(\mathbb{T}^{d})$, by Weierstrass approximation theorem we can find a sequence of polynomials $\{\varphi_l\}_{l=0}^\8$ uniformly converges to $f.$ On the other hand, from equality (\ref{identity}), we know that
$$\sup_{\mathbf{n}}\|e\sigma_x^{\mathbf{n}}(\mathbb{T}^d)e\|_\infty=\sup_{\mathbf{n}}\|e\widehat{\sigma_x^{\mathbf{n}}}(\mathbf{0})e\|_\infty<\infty.$$
Note that for any complex valued continuous function $g$ defined on $\mathbb{T}^d$, we have
\begin{align*}
\bigg\|e\int_{\mathbb{T}^d} g(\mathbf{z}) d \sigma_x^{\mathbf{n}}(\mathbf{z})e\bigg\|_\infty&\leq \bigg\|e\int_{\mathbb{T}^d} Re[g(\mathbf{z})] d \sigma_x^{\mathbf{n}}(\mathbf{z})e\bigg\|_\infty+\bigg\|e\int_{\mathbb{T}^d} Im[g(\mathbf{z})] d \sigma_x^{\mathbf{n}}(\mathbf{z})e\bigg\|_\infty\\
&\leq 2\bigg\|e\int_{\mathbb{T}^d} |g(\mathbf{z})|d \sigma_x^{\mathbf{n}}(\mathbf{z})e\bigg\|_\infty\leq 2\|g\|_{L_{\infty}(\mathbb{T}^d)}\sup_{\mathbf{n}}\|e\sigma_x^{\mathbf{n}}(\mathbb{T}^d)e\|_\infty.
\end{align*}
Hence for any $\delta>0,$ there exists $l_0$ and $\mathbf{n}_0$ such that when $l>l_0$ and $\mathbf{n}>\mathbf{n}_0$, we have
$$\bigg\| e\bigg[ \int_{\mathbb{T}^d} f(\mathbf{z}) d \sigma_x^{\mathbf{n}}(\mathbf{z})-\int_{\mathbb{T}^d} \varphi_l(\mathbf{z}) d \sigma_x^{\mathbf{n}}(\mathbf{z}) \bigg] e \bigg\|_{\8} < \delta\slash 3,$$
$$\bigg\|e \bigg[ \int_{\mathbb{T}^d} \varphi_l(\mathbf{z}) d \sigma_x^{\mathbf{n}}(\mathbf{z})-\int_{\mathbb{T}^d} \varphi_l(\mathbf{z}) d \sigma_x(\mathbf{z}) \bigg] e \bigg\|_{\8} < \delta\slash 3$$
and
$$\bigg\| e \bigg[ \int_{\mathbb{T}^d} f(\mathbf{z}) d \sigma_x(\mathbf{z})-\int_{\mathbb{T}^d} \varphi_l(\mathbf{z}) d \sigma_x(\mathbf{z}) \bigg] e \bigg\|_{\8} < \delta\slash 3.$$
Then it follows that,
\be\begin{split}
& \bigg\| \bigg[ \int_{\mathbb{T}^d} f(\mathbf{z}) d \sigma_x^{\mathbf{n}}(\mathbf{z})-\int_{\mathbb{T}^d} f(\mathbf{z}) d \sigma_x(\mathbf{z}) \bigg] e \bigg\|_{\8} \\
 \le & \bigg\|  \bigg[ \int_{\mathbb{T}^d} f(\mathbf{z}) d \sigma_x^{\mathbf{n}}(\mathbf{z})-\int_{\mathbb{T}^d} \varphi_l(\mathbf{z}) d \sigma_x^{\mathbf{n}}(\mathbf{z})\bigg] e \bigg\|_{\8} + \bigg\|  \bigg[ \int_{\mathbb{T}^d} \varphi_l(\mathbf{z}) d \sigma_x^{\mathbf{n}}(\mathbf{z})-\int_{\mathbb{T}^d} \varphi_l(\mathbf{z}) d \sigma_x(\mathbf{z})\bigg] e \bigg\|_{\8}\\
  + & \bigg\|  \bigg[ \int_{\mathbb{T}^d} \varphi_l(\mathbf{z}) d \sigma_x(\mathbf{z})-\int_{\mathbb{T}^d} f(\mathbf{z}) d \sigma_x(\mathbf{z})\bigg] e \bigg\|_{\8}< \delta,
\end{split}\ee
for any  $l>l_0$ and $\mathbf{n}>\mathbf{n}_0$. Thus we complete the proof.
\end{proof}

\begin{prop}\label{prop:continuousmeasure}
If $x\in \mathcal{K}^{\bot}$, then $\sigma_x$ is a continuous measure.
\end{prop}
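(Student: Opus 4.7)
The plan is to show that the spectral measure $\sigma_x$ assigns mass zero to every singleton $\{\underline{\lambda}\}\subset \mathbb{T}^d$, which is exactly the assertion that $\sigma_x$ is continuous. To do this I would study the twisted Cesaro averages of the correlation,
\[
A_{\mathbf{n}} \;:=\; \frac{1}{|\mathbf{n+1}|}\sum_{\mathbf{k}=\mathbf{0}}^{\mathbf{n}} \underline{\lambda}^{-\mathbf{k}}\,\gamma_x(\mathbf{k}),
\]
and evaluate $\lim_{\mathbf{n}} A_{\mathbf{n}}$ in two independent ways. On the spectral side, inserting $\gamma_x(\mathbf{k}) = \int_{\mathbb{T}^d}\mathbf{z}^{\mathbf{k}}\,d\sigma_x(\mathbf{z})$ from Lemma \ref{lem:noncommuposidefi} and interchanging sum and integral yields
\[
A_{\mathbf{n}} \;=\; \int_{\mathbb{T}^d} \prod_{j=1}^{d} \frac{1}{n_j+1} \sum_{k_j=0}^{n_j} (z_j \overline{\lambda_j})^{k_j}\,d\sigma_x(\mathbf{z}).
\]
Each one-dimensional factor is bounded by $1$ in modulus, equals $1$ at $z_j=\lambda_j$, and by a standard geometric-sum estimate tends to $0$ as $n_j\to\infty$ whenever $z_j\neq\lambda_j$. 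Hence the integrand tends pointwise to $\mathds{1}_{\{\underline{\lambda}\}}(\mathbf{z})$ and is dominated, so the dominated convergence theorem gives $\lim_{\mathbf{n}} A_{\mathbf{n}} = \sigma_x(\{\underline{\lambda}\})$.

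On the dynamical side, using $\gamma_x(\mathbf{k}) = \tau\bigl((\mathbf{T}^{\mathbf{k}}x)^* x\bigr)$ and pulling the scalars $\underline{\lambda}^{-\mathbf{k}}$ inside the trace, one immediately reads
\[
A_{\mathbf{n}} \;=\; \tau\bigl(M_{\mathbf{n}}(x,\underline{\lambda})^{*}\,x\bigr).
\]
Set $S_j := \lambda_j T_j$ for $j=1,\dots,d$. Since the $T_j$ commute and $|\lambda_j|=1$, $S_1,\dots,S_d$ are commuting unitaries on $L_2(\M)$, and $M_{\mathbf{n}}(x,\underline{\lambda}) = \frac{1}{|\mathbf{n+1}|}\sum_{\mathbf{k}=\mathbf{0}}^{\mathbf{n}} \mathbf{S}^{\mathbf{k}}x$. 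The Hilbert space mean ergodic theorem, iterated in each coordinate, shows that this average converges in $L_2(\M)$ to the orthogonal projection $Px$ of $x$ onto
\[
\bigcap_{j=1}^{d}\mathrm{Fix}(S_j) \;=\;\bigcap_{j=1}^{d}\{\,y\in L_2(\M): T_j y = \overline{\lambda_j}\, y\,\} \;\subset\;\bigcap_{j=1}^{d} E_j \;\subset\; E\;\subset\; \mathcal{K}.
\]
Since $x\in \mathcal{K}^{\bot}$, the projection $Px$ vanishes; therefore $M_{\mathbf{n}}(x,\underline{\lambda})\to 0$ in $L_2(\M)$, and Cauchy--Schwarz yields $|A_{\mathbf{n}}|\leq \|M_{\mathbf{n}}(x,\underline{\lambda})\|_{2}\|x\|_{2}\to 0$.

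Comparing the two limits gives $\sigma_x(\{\underline{\lambda}\})=0$ for every $\underline{\lambda}\in\mathbb{T}^d$, so $\sigma_x$ is continuous. There is no substantive obstacle: the argument is essentially a dual evaluation of one Cesaro sum, once via Fourier analysis on $\mathbb{T}^d$ and once via the mean ergodic theorem on $L_2(\M)$. The only bookkeeping point is that $\mathrm{Fix}(\lambda_j T_j)$ is the $\overline{\lambda_j}$-eigenspace of $T_j$, which still lies in $E_j$ because $\overline{\lambda_j}\in\mathbb{T}$; this is what ensures that the $L_2$-limit of the twisted ergodic average genuinely sits inside $\mathcal{K}$, so the orthogonality hypothesis $x\in\mathcal{K}^{\bot}$ forces it to vanish. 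No noncommutative machinery beyond the $L_2$ inner product structure of $(\M,\tau)$ enters.
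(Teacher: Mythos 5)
Your proof is correct and follows essentially the same route as the paper's: both evaluate the $\underline{\lambda}$-twisted Ces\`aro averages of the correlation in two ways, once spectrally to extract the point mass $\sigma_x(\{\underline{\lambda}\})$ and once dynamically via the mean ergodic theorem, using that the $L_2$-limit of the twisted averages lies in $\mathcal{K}$ while $x\in\mathcal{K}^{\bot}$. The only cosmetic differences are that the paper uses the symmetric two-sided sums and cites Wiener's criterion (Katznelson, Theorem 7.13) where you use one-sided averages with a direct dominated-convergence verification, and that you identify the ergodic limit explicitly as the projection onto the joint $\overline{\lambda_j}$-eigenspaces rather than arguing, as the paper does, that the limit lies in $\mathcal{K}\cap\mathcal{K}^{\bot}$.
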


\begin{proof}
We only prove the case $d=2$, and other cases follow from the similar argument. It is sufficient to show that $\sigma_x(\{ (z_1,z_2) \})=0$ for every $(z_1,z_2) \in \mathbb{T}^2 $. Using a two dimensional version of Theorem 7.13 \cite{Katz1976}, we have

\be\begin{split}
\sigma_x(\{ (z_1,z_2) \}) = &\lim_{n_1,n_2}\frac{1}{(2n_1 +1)(2n_2 +1)} \sum_{k_1=-n_1}^{n_1}\sum_{k_2=-n_2}^{n_2}z_1^{k_1}z_2^{k_2}\widehat{\sigma_x}(k_1,k_2)\\
 = & \lim_{n_1,n_2}\frac{1}{(2n_1 +1) (2n_2 +1)} \sum_{k_1=-n_1}^{n_1}\sum_{k_2=-n_2}^{n_2}z_1^{k_1}z_2^{k_2}\tau\left(x^{\ast}T_1^{k_1}T_2^{k_2}(x)\right)
\\
= & \lim_{n_1,n_2} \tau\left(x^{\ast}\ \frac{1}{(2n_1 +1) (2n_2 +1)}\sum_{k_1=-n_1}^{n_1} \sum_{k_2=-n_2}^{n_2} z_1^{k_1}T_1^{k_1} z_2^{k_2}T_2^{k_2}(x)\right).
\end{split}\ee
Then using mean ergodic theorem, we get that
\be
\frac{1}{(2n_1 +1) (2n_2 +1)} \sum_{k_1=-n_1}^{n_1} \sum_{k_2=-n_2}^{n_2} z_1^{k_1}T_1^{k_1} z_2^{k_2}T_2^{k_2}(x)
\ee
converges in $L_2$ sense to some element in $\mathcal{K}$.
On the other hand, by the fact that $T_i(\mathcal K^\perp)\subset\mathcal{K}^\perp$, we get
\be
\frac{1}{(2n_1 +1) (2n_2 +1)} \sum_{k_1=-n_1}^{n_1} \sum_{k_2=-n_2}^{n_2} z_1^{k_1}T_1^{k_1} z_2^{k_2}T_2^{k_2}(x) \in \mathcal{K}^{\bot}
\ee for all $n_1,n_2,$
thus the $L_2$-limit of
\be
\frac{1}{(2n_1 +1) (2n_2 +1)} \sum_{k_1=-n_1}^{n_1} \sum_{k_2=-n_2}^{n_2} z_1^{k_1}T_1^{k_1} z_2^{k_2}T_2^{k_2}(x)
\ee
is an element in $\mathcal{K}^{\perp}$.
Therefore $\sigma_x(\{ (z_1,z_2) \})$ is equal to 0 for any $(z_1,z_2) \in \mathbb{T}^2$, and we complete the proof.
\end{proof}

We introduce the definition of $\mathcal{D}$ here, and its relationship with bounded Besicovitch class will be shown in Section 5.

\begin{definition}\label{def:D}
We define $\mathcal{D}$ to be the set of all $\mathbf{a}\in S\cap \ell_\infty(\mathbb{N}^d)$ satisfying the following conditions:
\begin{enumerate}[{\rm (1)}]
\item The spectral measure $\sigma_{\mathbf{a}}$ corresponding to $\mathbf{a}$ is discrete;
\item The amplitude $ \lim_{\mathbf{n}}\frac{1}{|\mathbf{n+1}|} \sum_{\mathbf{k}=0}^{\mathbf{n}} a(\mathbf{k}) \mathbf{z}^{\mathbf{k}} $ exists for all $\mathbf{z}\in \mathbb{T}^d.$
\end{enumerate}
\end{definition}

\begin{prop}\label{pr:dense convergence}
Let $(\M,\tau,\mathbf{T})$ be a finite trace preserving ergodic dynamical system. Given $x\in L_\infty(\M)\cap (E\oplus\mathcal{K}^{\perp})$, for any $\e > 0$, there exists $e\in P(\M)$ such that
$$ \tau(e^\bot)\le \e \text{ and }  \quad \{M_{\mathbf{n}}(x,a)e \}$$
converges in $\M$ for all $\mathbf{a} \in \mathcal{D}$.
\end{prop}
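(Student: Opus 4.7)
The plan is to split $x=x_1+x_2$ along the orthogonal decomposition $L_2(\M)=\mathcal{K}\oplus\mathcal{K}^\perp$. Any simultaneous eigenfunction $y\in\bigcap_j E_j$ has $|y|^2$ invariant under every $T_j$, and ergodicity forces $|y|^2=\tau(|y|^2)\cdot\mathbb{I}$, so $E\subset L_\infty(\M)$; hence $x=x_1+x_2$ with $x_1\in E$ a finite linear combination of simultaneous eigenfunctions and $x_2=x-x_1\in L_\infty(\M)\cap\mathcal{K}^\perp$. I would handle the two pieces separately.

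For the eigenfunction piece, writing $x_1=\sum_{i=1}^N c_i y_i$ with $\mathbf{T}^{\mathbf{k}}y_i=\underline{\mu}_i^{\mathbf{k}}y_i$, linearity collapses the average to
$$M_{\mathbf{n}}(x_1,\mathbf{a})=\sum_{i=1}^N c_i\Big(\tfrac{1}{|\mathbf{n+1}|}\sum_{\mathbf{k}=\mathbf{0}}^{\mathbf{n}}a(\mathbf{k})\underline{\mu}_i^{\mathbf{k}}\Big)y_i,$$
and each bracketed scalar converges for every $\mathbf{a}\in\mathcal{D}$ by the amplitude condition (2) of Definition~\ref{def:D}. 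This gives operator-norm convergence with $e=\mathbb{I}$, so this component is immediate.

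For $x_2\in L_\infty(\M)\cap\mathcal{K}^\perp$, Proposition~\ref{prop:continuousmeasure} yields continuity of $\sigma_{x_2}$, while $\sigma_{\mathbf{a}}$ is discrete by the definition of $\mathcal{D}$, so $\sigma_{\mathbf{a}}\perp\sigma_{x_2}$ for every $\mathbf{a}\in\mathcal{D}$. I would apply Lemma~\ref{lem:noncommuweakconverge} to $x_2$ alone to obtain a single projection $e$ with $\tau(e^\bot)\le\epsilon$ such that $\big\|\big(\int f\,d\sigma_{x_2}^{\mathbf{n}}-\int f\,d\sigma_{x_2}\cdot\mathbb{I}\big)e\big\|_\infty\to 0$ for every $f\in C(\mathbb{T}^d)$; note that $e$ depends only on $x_2$, which secures uniformity in $\mathbf{a}$ from the outset. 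By continuous approximation $\mathbf{1}_U\le f_\eta\le \mathbf{1}_{U'}$ from above combined with positivity of the operator-valued measure, this upgrades to $\limsup_{\mathbf{n}}\|e\sigma_{x_2}^{\mathbf{n}}(U)e\|_\infty\le\sigma_{x_2}(\overline U)+\eta$ for any open $U\subset\mathbb{T}^d$ and any $\eta>0$. Given $\mathbf{a}\in\mathcal{D}$ and $\delta>0$, discreteness of $\sigma_{\mathbf{a}}$ and continuity of $\sigma_{x_2}$ let us pick an open $U\subset\mathbb{T}^d$ covering finitely many atoms of $\sigma_{\mathbf{a}}$ of total mass $\ge\sigma_{\mathbf{a}}(\mathbb{T}^d)-\delta$ while $\sigma_{x_2}(\overline U)<\delta$; set $V=\mathbb{T}^d\setminus U$. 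Using the Fourier representation
$$M_{\mathbf{n}}(x_2,\mathbf{a})=\frac{1}{|\mathbf{n+1}|}\int_{\mathbb{T}^d}P_{\mathbf{a}}^{\mathbf{n}}(\mathbf{z})\,Q_{x_2}^{\mathbf{n}}(\mathbf{z})\,d\mathbf{z},$$
where $P_{\mathbf{a}}^{\mathbf{n}}$ and $Q_{x_2}^{\mathbf{n}}$ are the scalar and operator-valued polynomial kernels whose squared moduli yield the densities of $\sigma_{\mathbf{a}}^{\mathbf{n}}$ and $\sigma_{x_2}^{\mathbf{n}}$ respectively, splitting the integral along $U\sqcup V$ and applying Cauchy--Schwarz on each piece followed by $\sup_{\|\xi\|\le 1}$ yields
$$\|M_{\mathbf{n}}(x_2,\mathbf{a})e\|_\infty^2\le 2\sigma_{\mathbf{a}}^{\mathbf{n}}(U)\,\|e\sigma_{x_2}^{\mathbf{n}}(U)e\|_\infty+2\sigma_{\mathbf{a}}^{\mathbf{n}}(V)\,\|e\sigma_{x_2}^{\mathbf{n}}(V)e\|_\infty.$$
Combining with Proposition~\ref{prop:measureconvergence} then gives $\limsup_{\mathbf{n}}\|M_{\mathbf{n}}(x_2,\mathbf{a})e\|_\infty^2\le 2\delta(\|\mathbf{a}\|_\infty^2+\|x_2\|_2^2)$, which tends to $0$ as $\delta\to 0$.

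The main obstacle is the passage from the continuous-function weak convergence of $\sigma_{x_2}^{\mathbf{n}}$ furnished by Lemma~\ref{lem:noncommuweakconverge} to the indicator-set estimates on $U$ and $V$ needed to exploit mutual singularity; the bridge is the continuous approximation $\mathbf{1}_U\le f_\eta\le\mathbf{1}_{U'}$ together with positivity of the operator-valued measures. A secondary delicate point is that the Cauchy--Schwarz step must be carried out before taking $\sup_{\|\xi\|\le 1}$, so that the operator-norm estimate collapses into scalar times $\|e\sigma_{x_2}^{\mathbf{n}}(X)e\|_\infty$ factors that are controlled by Lemma~\ref{lem:noncommuweakconverge}; once $e$ is chosen from $x_2$ alone, the whole argument applies uniformly to every $\mathbf{a}\in\mathcal{D}$.
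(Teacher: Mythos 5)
Your proposal is correct, and it follows the paper's overall architecture — split along $\mathcal K\oplus\mathcal K^\perp$, dispose of the eigenvector part via condition (2) of Definition \ref{def:D}, and for the $\mathcal K^\perp$ part fix a single projection $e$ from Lemma \ref{lem:noncommuweakconverge} (so that $e$ is independent of $\mathbf a$) and then run a Kadison--Schwarz factorization of $M_{\mathbf n}(x_2,\mathbf a)e$ into a scalar $\sigma_{\mathbf a}^{\mathbf n}$-factor and an operator $e\sigma_{x_2}^{\mathbf n}(\cdot)e$-factor. Where you genuinely diverge is in how mutual singularity of $\sigma_{\mathbf a}$ (discrete) and $\sigma_{x_2}$ (continuous, by Proposition \ref{prop:continuousmeasure}) is cashed in: the paper imports Coquet's construction, building a partition $\{V_l\}$ of $\mathbb T^d$ from the level sets of the Radon--Nikodym ratio and a subordinate continuous partition of unity $\{f_l\}$, so that the limit of the Cauchy--Schwarz bound is controlled by the affinity $\rho(\sigma_{\mathbf a},\sigma_{x_2})=0$ via Proposition \ref{prop:approxaffinity}; you instead split $\mathbb T^d$ into one open set $U$ swallowing most of the atoms of $\sigma_{\mathbf a}$ but with $\sigma_{x_2}(\overline U)<\delta$, and its complement, and pass from the continuous-test-function convergence of Lemma \ref{lem:noncommuweakconverge} and Proposition \ref{prop:measureconvergence} to indicator estimates by a portmanteau/Urysohn sandwich, which is legitimate here because the operator-valued densities are positive. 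Your route is more elementary (it never needs the affinity formalism and uses the discreteness of $\sigma_{\mathbf a}$ directly rather than only through $\rho=0$), at the cost of the extra open/closed-set approximation step; the paper's route stays entirely within continuous test functions and would adapt unchanged to any pair of mutually singular spectral measures, not just a discrete-versus-continuous pair. Two further remarks: your observation that ergodicity forces simultaneous eigenvectors to lie in $\M$ (so that $x_2=x-x_1$ really is in $L_\infty(\M)\cap\mathcal K^\perp$) fills a small gap the paper leaves implicit; and note that the single-$e$-for-all-$f\in C(\mathbb T^d)$ reading of Lemma \ref{lem:noncommuweakconverge} that you rely on is exactly what its proof delivers (and what the paper itself uses), even though the bare statement only asserts b.a.u.\ convergence for each $f$.
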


\begin{proof}
Fix $x\in L_\infty(\M)\cap (E\oplus\mathcal{K}^{\perp})$. We separate the proof into two parts.

Part 1. Let $x\in E\cap L_{\8}(\mathcal{M}),$ by linear combination, we can assume $x\in\bigcap_{i=1}^d E_i$. Then the result follows trivially from condition (2) of Definition \ref{def:D} since
\be\begin{split}
 M_{\mathbf{n}}(x, \mathbf{a})& = \frac{1}{|\mathbf{n+1}|} \sum_{\mathbf{k}=\mathbf{0}}^{\mathbf{n}}a(\mathbf{k}) \mathbf{T}^{\mathbf{k}}(x)\\
 &= \frac{1}{|\mathbf{n+1}|}\sum_{\mathbf{k}=\mathbf{0}}^{\mathbf{n}}a(\mathbf{k})\underline{\mu}^{\mathbf{k}}x, ~\ \underline{\mu}\in \mathbb{T}^d.
\end{split}\ee

Part 2. Now let $x \in \mathcal{K}^{\bot} \cap L_{\8}(\mathcal{M})$. We first give a construction of partitions of $\mathbb{T}^d$ associated with spectral measure. This construction is first used by Coquet \cite{CKF1977} in realizing approximation process of the affinity between measures.

Let $\sigma_{\mathbf{a}}, \sigma_{\mathbf{a}}^{\mathbf{n}}, \sigma_x ,\sigma_x^{\mathbf{n}}$ be the corresponding positive measures we defined in Section 2.2. Let $\nu=\sigma_{\mathbf{a}}+\sigma_x$, so that $\sigma_{\mathbf{a}}$ and $\sigma_x$ are absolutely continuous with respect to $\nu$.
For any $\delta >0$, we define the following sets,
\be
\Omega=\left\{ \mathbf{z}\in \mathbb{T}^d: \frac{d\sigma_{\mathbf{a}}}{d\nu}(\mathbf{z})=0 \right\},
\ee
\be
U_l=\left\{ \mathbf{z}\in \mathbb{T}^d \backslash \Omega: (1+\delta)^l \le \frac{(d\sigma_x\slash d\nu) (\mathbf{z})}{(d\sigma_{\mathbf{a}}\slash d\nu)(\mathbf{z})}<(1+\delta)^{l+1} \right\}, \ \ l\in \mathbb{Z}.
\ee
The series $\sum_l \sigma_{\mathbf{a}}(U_l)$ is convergent, so that $r(n)=\sum_{|l|\ge n} \sigma_{\mathbf{a}}(U_l)$ tends to zero when $n$ tends to infinity.

For any positive integer $N$, We consider the following sets:
\be
V_0 = \Omega \cup \bigcup_{|l|\ge N}U_l,
\ee
\be
V_1 = \left\{ \mathbf{z}\in \mathbb{T}^d \backslash \Omega: \frac{d \sigma_x}{d\nu}(\mathbf{z}) =0 \right\}
\ee
and
\be
V_l=U_{l-1-N}, ~\ l=2,3,\cdots, 2N.
\ee
The family $\{ V_0,V_1,\cdots,V_{2N} \}$ is a partition of $\mathbb{T}^d$, and $\sigma_{\mathbf{a}}(V_0)=r(N).$

Now we choose $N$ such that $\sigma_{\mathbf{a}}^{1 \slash 2}(V_0)\sigma_x^{1 \slash 2}(V_0)< \delta$, and it is obvious that $\sigma_{\mathbf{a}}^{1 \slash 2}(V_1)\sigma_x^{1 \slash 2}(V_1)=0.$
Under these conditions,
\beq\label{eq:affinity}\begin{split}
\rho(\sigma_{\mathbf{a}},\sigma_x) &\ge \sum_{l=2}^{2N} \int_{V_l}  \left(\frac{d \sigma_{\mathbf{a}}}{d\nu}\right)^{1\slash 2} \left(\frac{d \sigma_x}{d\nu}\right)^{1\slash 2}d\nu\\
& \ge \sum_{l=2}^{2N} (1+\delta)^{(l-1-N)\slash 2} \sigma_{\mathbf{a}}(V_l)\\
& \ge (1+\delta)^{-1 \slash 2} \sum_{l=2}^{2N} [\sigma_{\mathbf{a}}(V_l)]^{1\slash 2} [\sigma_x(V_l)]^{1\slash 2}\\
& \ge (1+\delta)^{-1 \slash 2} \sum_{l=0}^{2N} [\sigma_{\mathbf{a}}(V_l)]^{1\slash 2} [\sigma_x(V_l)]^{1\slash 2}-\delta.
\end{split}\eeq

So we can find a family of continuous non-negative functions $f_l: \mathbb{T}^d \rightarrow \mathbb{R}$ $(l=0,1,\dots,2N)$ which satisfies:
\begin{enumerate}[{\rm (i)}]
\item $\sum_{l=0}^{2N}f_l=1$;
\item $\begin{array}{lcl}
\int_{\mathbb{T}^d}f_l d\sigma_{\mathbf{a}}  \le  (1+\delta)^{1\slash 2} \sigma_{\mathbf{a}}(V_l),\\
\int_{\mathbb{T}^d}f_l d\sigma_{x}  \le  (1+\delta)^{1\slash 2} \sigma_{x}(V_l), \ \ l=0,1,\dots,2N.
      \end{array}$
\end{enumerate}
Then from (\ref{eq:affinity}) we get:
\be
\rho(\sigma_{\mathbf{a}},\sigma_x) \ge (1+ \delta)^{-1}\sum_{l=0}^{2N}\left( \int_{\mathbb{T}^d}f_l d\sigma_{\mathbf{a}} \right)^{1 \slash 2}\left( \int_{\mathbb{T}^d}f_l d\sigma_{x} \right)^{1 \slash 2}-\delta.
\ee
From condition (1) of Definition \ref{def:D} and Proposition \ref{prop:continuousmeasure}, we know $\sigma_{\mathbf{a}}$ and $\sigma_x$ are respectively discrete and continuous thus mutually singular, which means the affinity
$\rho(\sigma_{\mathbf{a}},\sigma_x)= 0.$  Then with $\delta$ arbitrary, we have
\beq\label{eq:affinity2}
\sum_{l=0}^{2N}\left( \int_{\mathbb{T}^d}f_l d\sigma_{\mathbf{a}} \right)^{1 \slash 2}\left( \int_{\mathbb{T}^d}f_l d\sigma_{x} \right)^{1 \slash 2}=0.
\eeq

Now we calculate the following for some projection $e$ which will be fixed later:
\be\begin{split}
\left\| M_{\mathbf{n}}(x,\mathbf{a}) e\right\|_\8 & = \left\|\frac{1}{|\mathbf{n+1}|} \sum_{\mathbf{k}=\mathbf{0}}^{\mathbf{n}}a(\mathbf{k}) \mathbf{T}^{\mathbf{k}}(x)e\right\|_\8\\
& = \left\| \frac{1}{|\mathbf{n+1}|} \int_{\mathbb{T}^d}\sum_{\mathbf{k}=\mathbf{0}}^{\mathbf{n}}\sum_{\mathbf{j}=\mathbf{0}}^{\mathbf{n}}
a(\mathbf{j})  \mathbf{T}^{\mathbf{k}}(x)e~\mathbf{z}^{\mathbf{k}-\mathbf{j}}d\mathbf{z}\right\|_\8\\
& = \left\|\int_{\mathbb{T}^d} \Bigg(\frac{1}{\sqrt{|\mathbf{n+1}|}}\sum_{\mathbf{j}=\mathbf{0}}^{\mathbf{n}}a(\mathbf{j})\mathbf{z}^{-\mathbf{j}} \Bigg)
\Bigg( \frac{1}{\sqrt{|\mathbf{n+1}|}}\sum_{\mathbf{k}=\mathbf{0}}^{\mathbf{n}} \mathbf{T}^{\mathbf{k}}(x)\mathbf{z}^{\mathbf{k}}\Bigg)e~ d\mathbf{z}\right\|_\8\\
& = \left\|\sum_{l=0}^{2N}\int_{\mathbb{T}^d} f_l(\mathbf{z}) \Bigg(\frac{1}{\sqrt{|\mathbf{n+1}|}}\sum_{\mathbf{j}=\mathbf{0}}^{\mathbf{n}}a(\mathbf{j})\mathbf{z}^{-\mathbf{j}} \Bigg)
\Bigg( \frac{1}{\sqrt{|\mathbf{n+1}|}}\sum_{\mathbf{k}=\mathbf{0}}^{\mathbf{n}} \mathbf{T}^{\mathbf{k}}(x)\mathbf{z}^{\mathbf{k}}\Bigg)e~ d\mathbf{z}\right\|_\8.
\end{split}\ee
Then by triangle inequality and Kadison-Schwarz inequality, we have
\be\begin{split}
\left\| M_{\mathbf{n}}(x,\mathbf{a}) e\right\|_\8 & \le \sum_{l=0}^{2N} \left\| \int_{\mathbb{T}^d} f_l(\mathbf{z}) \Bigg(\frac{1}{\sqrt{|\mathbf{n+1}|}}\sum_{\mathbf{j}=\mathbf{0}}^{\mathbf{n}}a(\mathbf{j})\mathbf{z}^{-\mathbf{j}} \Bigg)
\Bigg( \frac{1}{\sqrt{|\mathbf{n+1}|}}\sum_{\mathbf{k}=\mathbf{0}}^{\mathbf{n}} \mathbf{T}^{\mathbf{k}}(x)\mathbf{z}^{\mathbf{k}}\Bigg)e~ d\mathbf{z}\right\|_\8\\
& \le \sum_{l=0}^{2N}\left\| \left[ \int_{\mathbb{T}^d} \frac{f_l(\mathbf{z})}{|\mathbf{n+1}|}\Bigg|\sum_{\mathbf{j}=\mathbf{0}}^{\mathbf{n}}a(\mathbf{j})\mathbf{z}^{-\mathbf{j}} \Bigg|^2d\mathbf{z}\right]^{\frac{1}{2}}
\left[ \int_{\mathbb{T}^d} e \frac{f_l(\mathbf{z})}{|\mathbf{n+1}|}\Bigg|\sum_{\mathbf{k}=\mathbf{0}}^{\mathbf{n}} \mathbf{T}^{\mathbf{k}}(x)\mathbf{z}^{\mathbf{k}} \Bigg|^2 e~ d\mathbf{z}\right]^{\frac{1}{2}} \right\|_\8\\
& = \sum_{l=0}^{2N}\left\| \left(\int_{\mathbb{T}^d}f_l d \sigma_{\mathbf{a}}^{\mathbf{n}} \right)^{1 \slash 2} \left(e \int_{\mathbb{T}^d} f_l d \sigma_{x}^{\mathbf{n}}~ e \right)^{1 \slash 2}\right\|_\8.
\end{split}\ee

Apply Lemma \ref{lem:noncommuweakconverge} to $x$: For any $\varepsilon>0$, there exists a projection $e$ such that
$\tau(e^\perp)<\varepsilon$ and
$$\left\|e \left(\int_{\mathbb{T}^d} f_l d \sigma_{x}^{\mathbf{n}}-\int_{\mathbb{T}^d} f_l d \sigma_{x}\right)  e\right\|_\8$$
converges to 0. Now that $e$ is fixed, by Proposition \ref{prop:measureconvergence}

\be\begin{split}
\left\| M_{\mathbf{n}}(x,\mathbf{a})e\right\|_\8 \le & \sum_{l=0}^{2N}\left\| \left(\int_{\mathbb{T}^d}f_l d \sigma_{\mathbf{a}}^{\mathbf{n}} \right)^{1 \slash 2} \left(e\int_{\mathbb{T}^d} f_l d \sigma_{x}^{\mathbf{n}}e \right)^{1 \slash 2}\right\|_\8
\end{split}\ee
converges to
\be\begin{split}
 \sum_{l=0}^{2N}\left\|\left( \int_{\mathbb{T}^d}f_l d\sigma_{\mathbf{a}} \right)^{1 \slash 2} \left( \int_{\mathbb{T}^d}f_l d\sigma_{x} \right)^{1 \slash 2}e\right\|_\8= & \sum_{l=0}^{2N}\left( \int_{\mathbb{T}^d}f_l d\sigma_{\mathbf{a}} \right)^{1 \slash 2} \left( \int_{\mathbb{T}^d}f_l d\sigma_{x} \right)^{1 \slash 2} = 0
\end{split}\ee
{by }(\ref{eq:affinity2}).
\end{proof}

\subsection{Maximal inequality} The maximal inequality in proving the classical one-parameter Wiener-Wintner's theorem follows from a trivial argument. The desired multi-parameter noncommutative maximal inequality requires new ideas since the classical extrapolation argument and the existed noncommutative one \cite{Hu2009} do not work. We make use of the atomic decomposition of Orlicz space discovered by Tao \cite{Tao2001}.

\begin{thm}\label{th:maximalergodic}
Let $(\M, \tau, \mathbf{T})$ be a finite trace preserving dynamical system and $\mathbf{a}\in \ell_{\8}(\mathbb{N}^d)$. Given $x\in L_1\log^{2(d-1)} L(\mathcal{M})$, for any $\lambda>0$ there are positive constant $C$ and a projection $e\in P(\mathcal{M})$ such that
$$\tau(e^{\bot}) \le
 C \frac{\| x \|_{L_1\log^{2(d-1)}L}}{\lambda} ~ \text{ and } ~ \sup_{\mathbf{n}}\|e (M_{\mathbf{n}}(x, \mathbf{a})) e\|_{\8} \le \| \mathbf{a}\|_{\8} \lambda.$$
Moreover, for $x\in L_2\log^{2(d-1)} L(\mathcal{M})$ we have the following estimates $$\tau(e^{\bot}) \le
 \left(C \frac{\| x \|_{L_2\log^{2(d-1)}L}}{\lambda}\right)^2 ~ \text{ and } ~ \sup_{\mathbf{n}}\| (M_{\mathbf{n}}(x, \mathbf{a})) e\|_{\8} \le \| \mathbf{a}\|_{\8} \lambda.$$
\end{thm}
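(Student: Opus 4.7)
The plan is to attack the theorem in three stages, exactly following the strategy announced in the introduction. First, I would reduce the weighted maximal inequality to the unweighted one. Writing $a(\mathbf{k})=\sum_{j=1}^{4}\varepsilon_j a_j(\mathbf{k})$ with $0\le a_j(\mathbf{k})\le\|\mathbf{a}\|_\infty$ and $\varepsilon_j\in\{1,-1,i,-i\}$, and splitting $x$ into four positive pieces of bounded Orlicz norm, the positivity-preservation of each $\mathbf{T}^{\mathbf{k}}$ gives, componentwise in $\mathbf{n}$, the operator-order bound
\[
0\le M_{\mathbf{n}}(x_j,a_j)\le \|\mathbf{a}\|_\infty M_{\mathbf{n}}(x_j),
\]
so it suffices to establish an unweighted Orlicz maximal estimate for positive inputs, with the factor $\|\mathbf{a}\|_\infty$ tracking through to the final bound.

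Second, I would combine Junge--Xu's one-parameter optimal noncommutative Dunford--Schwartz inequality $\|\{M^{T_j}_n y\}_n\|_{L_p(\M;\ell_\infty)}\lesssim (p-1)^{-2}\|y\|_p$, valid for $p>1$, with its column analog valid for $p>2$, and iterate across the $d$ commuting directions to obtain the multi-parameter estimate
\[
\bigl\|\{M_{\mathbf{n}}(y)\}_{\mathbf{n}}\bigr\|_{L_p(\M;\ell_\infty)}\lesssim (p-1)^{-2(d-1)}\|y\|_p
\]
(and its column analog in the $(p-2)$-variable). The exponent is $2(d-1)$, not $2d$: the weak type $(1,1)$ (respectively weak type $(2,2)$) of the innermost single-parameter step absorbs one of the $(p-1)^{-2}$ factors, a noncommutative analog of the classical saving that places Fava's theorem in $L\log^{d-1}L$ rather than $L\log^{d}L$.

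Third, I would extrapolate to the Orlicz space via Tao's atomic decomposition \cite{Tao2001} in its noncommutative form \cite{BCO2013}. Writing a positive $x$ of unit $L_1\log^{2(d-1)}L$-norm as $x=\sum_{k\ge 0}2^k b_k$, where $0\le b_k\le 1$ is supported on a projection of trace at most $2^{-k}$ and $\sum_k(1+k)^{2(d-1)}2^k\tau(\mathrm{supp}\,b_k)\lesssim 1$, I would apply the $L_{p_k}$-maximal estimate from stage two with $p_k=1+1/(1+k)$ to each atom. The noncommutative Chebyshev inequality applied to the positive operator dominating $\{M_{\mathbf{n}}(2^k b_k)\}_{\mathbf{n}}$ (obtained from the $L_{p_k}(\M;\ell_\infty)$-factorization) yields projections $e_k\in P(\M)$ with
\[
\tau(e_k^\perp)\lesssim \lambda^{-1}(1+k)^{2(d-1)}2^k\tau(\mathrm{supp}\,b_k),\qquad \sup_{\mathbf{n}}\|e_k M_{\mathbf{n}}(2^kb_k)e_k\|_\infty\lesssim 2^{-k/2}\lambda,
\]
and setting $e=\bigwedge_{k}e_k$ gives $\tau(e^\perp)\lesssim\lambda^{-1}\|x\|_{L_1\log^{2(d-1)}L}$ and $\sup_{\mathbf{n}}\|eM_{\mathbf{n}}(x)e\|_\infty\lesssim\lambda$ after summing the geometric series. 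The $L_2\log^{2(d-1)}L$ statement follows by the same scheme from the column maximal inequality, with the squared trace bound coming from the column Chebyshev estimate $\tau(\mathds{1}_{(\lambda^2,\infty)}(|z|^2))\le\lambda^{-p_k}\||z|^2\|_{p_k/2}^{p_k/2}$.

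The main obstacle is the bookkeeping in the third stage: choosing $p_k$ and the Chebyshev threshold $\delta_k$ so that the three blow-ups---the atom weight $2^k$, the Orlicz weight $(1+k)^{2(d-1)}$, and the maximal constant $(p_k-1)^{-2(d-1)}$---align precisely to reproduce the claimed Orlicz norm of $x$. Classical Yano-type extrapolation is not directly applicable since $L_p(\M;\ell_\infty)$ is a quasi-linear vector-valued norm rather than a standard Banach function space, and as remarked in the introduction the extrapolation of \cite{Hu2009} is too rigid for the present range; Tao's atomic decomposition sidesteps this by letting us extrapolate atom by atom, transferring the optimization to the structural side of $x$ itself, which is exactly where the noncommutativity and multi-parameter nature of $\mathbf{T}$ do not interfere.
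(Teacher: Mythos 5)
Your stage 1 (reducing the weighted inequality to the unweighted one for positive operators via $0\le M_{\mathbf{n}}(x_j,a_j)\le\|\mathbf{a}\|_\infty M_{\mathbf{n}}(x_j)$) and your treatment of the $L_2\log^{2(d-1)}L$ case are sound and agree in spirit with the paper. The genuine gap is in stage 2: the claimed multi-parameter estimate $\|\{M_{\mathbf{n}}(y)\}_{\mathbf{n}}\|_{L_p(\M;\ell_\infty)}\lesssim (p-1)^{-2(d-1)}\|y\|_p$ is not obtainable by the iteration you describe, and is in fact false. Iterating Junge--Xu across $d$ commuting directions gives $(p-1)^{-2d}$, and a weak type $(1,1)$ bound for the innermost direction cannot ``absorb'' a factor of $(p-1)^{-2}$ into a \emph{strong} $L_p(\ell_\infty)$ estimate: Junge and Xu prove that $(p-1)^{-2}$ is the optimal order already for a single Dunford--Schwartz operator as $p\to1$. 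A sanity check makes this vivid: for $d=1$ your formula asserts a $p$-uniform bound $\|\{M_n y\}_n\|_{L_p(\ell_\infty)}\lesssim\|y\|_p$, which is known to be false. The Fava-type saving that places the theorem in $L_1\log^{2(d-1)}L$ rather than $L_1\log^{2d}L$ lives at the level of the \emph{weak-type conclusion} (projection plus trace bound), not at the level of a strong vector-valued $L_p$ norm; since your stage 3 extrapolates the full $d$-parameter operator in one shot from the stage-2 constant, correcting that constant to $(p-1)^{-2d}$ would only deliver the result on $L_1\log^{2d}L$.

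The paper implements the saving differently, and this is the step you are missing. It first proves a \emph{one-parameter Orlicz-to-Orlicz} maximal estimate $\|\{M_n(T)x\}_n\|_{L_1\log^{s}L(\ell_\infty)}\le C\|x\|_{L_1\log^{s+2}L}$ for each $s\ge0$, by applying Tao's atomic decomposition to $x$ and choosing, for each atom supported on a projection $e$, the exponent $p=1+(\log\frac{1}{\tau(e)})^{-(s+2)/2}$ so that the Junge--Xu constant $(p-1)^{-2}=(\log\frac{1}{\tau(e)})^{s+2}$ exactly cancels the atom's weight; the embedding $L_p(\M;\ell_\infty)\subset L_1\log^sL(\M;\ell_\infty)$ then closes the loop. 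Composing this estimate $d-1$ times drops $x\in L_1\log^{2(d-1)}L$ to a single positive majorant in $L_1$ of all the averages $M_{n_1,\dots,n_{d-1}}(T_1,\dots,T_{d-1})x$, and only then is the last direction handled by Yeadon's weak type $(1,1)$ inequality applied to that majorant. So the extrapolation is performed direction by direction on strong Orlicz bounds, with the weak-type endpoint reserved for the final direction; your one-shot Chebyshev argument on the full $d$-parameter factorization cannot reproduce this. (A workable variant closer to your plan would be to extrapolate only the $(d-1)$-fold composition, whose iterated constant genuinely is $(p-1)^{-2(d-1)}$, and then finish with Yeadon.) The paper's $L_2$ statement is then obtained simply by applying the $L_1$ result to $|x|^2$ together with the Kadison--Schwarz inequality, which is lighter than the column-space machinery you propose.
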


\begin{proof}
We first prove a non-weighted version of the maximal inequalities for positive operators.
We start with the following one-parameter maximal inequality between Orlicz spaces, which is the key point of the whole arguments.
Let $T$ be any $T_i$, $i=1,\dotsm,d$. For any $s\geq0$, there exists constant $C$ such that
\be
 \|\{M_{n}(T)x\}_{n}\|_{L_1\log^{s} L(\ell_\8)}\le C \|x\|_{L_1\log^{s+2} L}
\ee
holds for any $x\in L_1\log^{s+2}L(\M).$

In [Tao01], the author establish atomic characterization of commutative Orlicz space $L_1\log^{r} L$, $r>0$. Starting with the generalized singular numbers, we note that the same arguments work well in the noncommutative setting. That is, every positive $x\in L_1\log^{s+2} L(\M)$ can be decomposed into convex linear combination of atoms:
$$x= \sum_e \lambda_e x_e$$
where $x_e$'s are atoms of the form $\frac{1}{\tau(e)}(\log(\frac{1}{\tau(e)}))^{-(s+2)}e$ with $e$ being projection as well as $0<\tau(e) \ll 1$, and $\|x\|_{L_1\log^{s+2}L}$ is equivalent to $\sum_e\lambda_e$. Thus we are reduced to the estimate over every atom
\be\begin{split}
 \|\{M_{n}(T)x\}_{n}\|_{L_1\log^{s} L(\ell_\8)} &= \| \{M_{n}(T)\sum_e \lambda_e x_e \|_{L_1\log^{s} L(\ell_\8)}\\
& \le  \sum_e \lambda_e\| \{M_{n}(T) x_e \|_{L_1\log^{s} L(\ell_\8)}.
\end{split}\ee
Now taking $p=1+ \big(\log\frac{1}{\tau(e)}\big)^{-\frac{s+2}{2}}$, using the fact that the ${L_p(\M;\ell_\8)}$ embeds into ${L_1\log^s L(\M;\ell_\8)}$ and applying Junge and Xu's ergodic maximal inequality \cite{JX2007}, we have
\be\begin{split}
& \|\{M_{n}(T)x\}_{n}\|_{L_1\log^{s} L(\ell_\8)} \leq \sum_e \lambda_e\| \{M_{n_i}(T_i) x_e \|_{L_p(\ell_\8)}\\
&\le C\sum_e \frac{\lambda_e}{(p-1)^2}\| x_e \|_{p} = C\sum_e \lambda_e \frac{1}{(p-1)^2} \frac{1}{\tau(e)} \left(\log\frac{1}{\tau(e)}\right)^{-(s+2)} \Big(\tau(e)\Big)^{\frac{1}{p}}\\
&\leq C \sum_e \lambda_e \Big(\tau(e)\Big)^{\frac{1}{p}-1}.
\end{split}\ee
The desired estimate follows from the fact that $\Big(\tau(e)\Big)^{\frac{1}{p}-1}\leq 1$.
Indeed, let $1 \ll \frac{1}{\tau(e)}= a$,
\be\begin{split}
\Big(\tau(e)\Big)^{\frac{1}{p}-1} & = \Big(\tau(e)\Big)^{\frac{1}{1+ \big(\log\frac{1}{\tau(e)}\big)^{-\frac{s+2}{2}}}-1}
 = \Big(\tau(e)\Big)^{\frac{-\big(\log\frac{1}{\tau(e)}\big)^{-\frac{s+2}{2}}}{1+ \big(\log\frac{1}{\tau(e)}\big)^{-\frac{s+2}{2}}}}\\
& = a^{\frac{\big(\log a \big)^{-\frac{s+2}{2}}}{1+ \big(\log a\big)^{-\frac{s+2}{2}}}}
 \le a^{\big(\log a \big)^{-\frac{s+2}{2}}}
 = e^{\log a^{\big(\log a \big)^{-\frac{s+2}{2}}}}\\
& = e^{\big(\log a\big)^{-\frac{s}{2}}}\leq1.
\end{split}\ee
So finally we get
\be
 \|\{M_{n}(T)x\}_{n}\|_{L_1\log^{s} L(\ell_\8)}\le C\sum_e \lambda_e \le C \|x\|_{L_1\log^{s+2} L}.
\ee
By Proposition 3.1 in \cite{BCO2013} we obtain en equivalent formulation: There exists an operator $a$ such that
$$M_{n}(T)x\leq a\quad\forall n\quad\mathrm{and}\quad\|a\|_{L_1\log^{s} L}\leq C\|x\|_{L_1\log^{s+2} L}.$$

Now starting with $x\in L_1\log^{2(d-1)} L(\M)$. Applying $d$ times the previous equivalent formulation, there exists an operator $x_{d-1}$ such that
$$M_{n_1,\dotsm,n_{d-1}}(T_1,\dotsm,T_{d-1})x\leq x_d\quad\forall (n_1,\dotsm,n_{d-1})\quad\mathrm{and}\quad\|x_{d-1}\|_{1}\leq C\|x\|_{L_1\log^{2(d-1)} L}.$$
Then by Yeadon's weak type $(1,1)$ maximal inequality, it implies for any $\lambda>0$, there is a projection $e\in P(\mathcal{M})$ such that $$\tau(e^{\bot}) \le C \frac{\| x \|_{L_1\log^{2(d-1)} L(\M)}}{\lambda}  ~ \text{ and } ~ \sup_{\mathbf{n}}\|e (M_{\mathbf{n}}(x)) e\|_{\8} \le \lambda,$$
with $C$ being a constant independent of $x$ and $\lambda$.

When $x\in L_2\log^{2(d-1)}L$, we apply the previous estimate to $|x|^2$. That is, for any $\eta>0$, there is a projection $e\in P(\mathcal{M})$ such that $$\tau(e^{\bot}) \le C \frac{\| |x|^2 \|_{L_1\log^{2(d-1)} L(\M)}}{\eta}  ~ \text{ and } ~ \sup_{\mathbf{n}}\|e (M_{\mathbf{n}}(|x|^2)) e\|_{\8} \le \eta$$
with $C$ being a constant independent of $x$ and $\eta$.
Then for any $\lambda>0$, taking $\eta=\lambda^2$, we get the desired estimate since on the one hand
$\||x|^2\|_{L_1\log^{2(d-1)}L}$ is equivalent to $\|x\|^2_{L_2\log^{2(d-1)}L}$, on the other hand by Kadison-Schwarz inequality
$$\|M_{\mathbf{n}}(x) e\|^2_\infty\leq \|eM_{\mathbf{n}}(|x|^2) e\|_\infty\leq \lambda^2.$$

\bigskip

With the previous non-weighted estimates, the following arguments allow us to complete the proof.
Given $x\in {L_1\log^{2(d-1)} L(\M)}$, we have $x = (x_1 - x_2)+ i(x_3 - x_4),$ where $x_j \in {L_1\log^{2(d-1)} L^+(\M)}$ and $\|x_j\|_{L_1\log^{2(d-1)} L(\M)} \le \|x\|_{L_1\log^{2(d-1)} L(\M)}$ for each $j=1,2,3,4.$
So using previous non-weighted result, for any $\lambda>0$, there exists $e_j \in P(\mathcal{M})$ satisfying
\be
\tau(e_j^{\bot}) \le
 C \frac{\| x_j \|_{L_1\log^{2(d-1)} L(\M)}}{\lambda }  ~ \text{ and } ~ \sup_{\mathbf{n}}\|e_j (M_{\mathbf{n}}(x_j)) e_j\|_{\8} \le \frac{\lambda}{8}.
\ee
with $C$ an absolute constant.
Take $e=\wedge_{j=1}^4 e_j$, we have
$$ \tau(e) \le  C \frac{\| x \|_{L_1\log^{2(d-1)} L(\M)}}{\lambda }$$
and
\be\begin{split}
&\sup_{\mathbf{n}} \|e (M_{\mathbf{n}}(x_j, \mathbf{a})) e\|_{\8} = \sup_{\mathbf{n}} \left\| e \left( \frac{1}{|\mathbf{n}|} \sum_{\mathbf{k}=\mathbf{0}}^{\mathbf{n}-\mathbf{1}}a(\mathbf{k}) \mathbf{T}^{\mathbf{k}}(x_j) \right) e \right\|_{\8}\\
& = \sup_{\mathbf{n}} \left\| e \left( \frac{1}{|\mathbf{n}|} \sum_{\mathbf{k}=\mathbf{0}}^{\mathbf{n}-\mathbf{1}}Re [a(\mathbf{k})] \mathbf{T}^{\mathbf{k}}(x_j) \right) e +  e \left( \frac{1}{|\mathbf{n}|} \sum_{\mathbf{k}=\mathbf{0}}^{\mathbf{n}-\mathbf{1}}i Im[a(\mathbf{k})] \mathbf{T}^{\mathbf{k}}(x_j) \right) e\right\|_{\8}\\
& \le \sup_{\mathbf{n}}\left\{ \left\| e \left( \frac{1}{|\mathbf{n}|} \sum_{\mathbf{k}=\mathbf{0}}^{\mathbf{n}-\mathbf{1}}Re [a(\mathbf{k})] \mathbf{T}^{\mathbf{k}}(x_j) \right) e \right\|_{\8} + \left\| e \left( \frac{1}{|\mathbf{n}|} \sum_{\mathbf{k}=\mathbf{0}}^{\mathbf{n}-\mathbf{1}}i Im[a(\mathbf{k})] \mathbf{T}^{\mathbf{k}}(x_j) \right) e\right\|_{\8} \right\}\\
& \le \sup_{\mathbf{n}}\sup_{0\le \mathbf{k}\le(\mathbf{n}-\mathbf{1})} \big| a(\mathbf{k})\big|\left\{ \left\| e \left( \frac{1}{|\mathbf{n}|} \sum_{\mathbf{k}=\mathbf{0}}^{\mathbf{n}-\mathbf{1}}\mathbf{T}^{\mathbf{k}}(x_j) \right) e \right\|_{\8} + \left\| e \left( \frac{1}{|\mathbf{n}|} \sum_{\mathbf{k}=\mathbf{0}}^{\mathbf{n}-\mathbf{1}} \mathbf{T}^{\mathbf{k}}(x_j) \right) e\right\|_{\8}\right\}\\
& \le \sup_{\mathbf{n}} 2 \|\mathbf{a}\|_{\8} \left\| e \left( \frac{1}{|\mathbf{n}|} \sum_{\mathbf{k}=\mathbf{0}}^{\mathbf{n}-\mathbf{1}} \mathbf{T}^{\mathbf{k}}(x_j) \right) e \right\|_{\8} \le \|\mathbf{a}\|_{\8} \frac{\lambda}4.
\end{split}\ee
Thus using triangle inequality again,
\be
\sup_{\mathbf{n}} \|e (M_{\mathbf{n}}(x, \mathbf{a})) e\|_{\8} \ee
\be
=\sup_{\mathbf{n}} \|e (M_{\mathbf{n}}(x_1, \mathbf{a})-M_{\mathbf{n}}(x_2, \mathbf{a})+i M_{\mathbf{n}}(x_3, \mathbf{a})-i M_{\mathbf{n}}(x_4, \mathbf{a})) e\|_{\8} \le \|\mathbf{a}\|_{\8} \lambda.
\ee
\end{proof}

\subsection{Banach principle}
In the following, we give a result which plays the role of the Banach Principle as in classical ergodic theory.

\begin{lem}\label{lem:densetoall}
If for any fixed finite trace preserving dynamical system $(\M,\tau,\mathbf{T})$, there exists a dense subset $\mathcal{B}$ of $L_1\log^{2(d-1)}L(\M)$ such that for $x\in \mathcal{B}$, for any $\e > 0$, there exists $e\in P(\M)$ such that
$$ \tau(e^\bot)\le \e \text{ and }  \{M_{\mathbf{n}}(x,a)e \}$$
converges in $\M$ for all $\mathbf{a} \in \mathcal{A}$.
Then  $\mathcal{A}$ is of $L_1\log^{2(d-1)}L(\M)$-NCbWW type and $L_2\log^{2(d-1)}L(\M)$-NCWW type.
\end{lem}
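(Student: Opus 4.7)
The plan is to run the standard noncommutative Banach principle argument, combining the convergence on the dense subset $\mathcal{B}$ provided by the hypothesis with the maximal inequality of Theorem~\ref{th:maximalergodic}. Fix a finite trace preserving dynamical system $(\M,\tau,\mathbf{T})$, an element $x\in L_1\log^{2(d-1)}L(\M)$, and $\e>0$. First I would choose an approximating sequence $\{x_m\}_{m\ge 1}\subset \mathcal{B}$ with $\|x-x_m\|_{L_1\log^{2(d-1)}L}\le 4^{-m}$.

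Next, for each $m$, I would apply the hypothesis to obtain a projection $p_m\in P(\M)$ with $\tau(p_m^\bot)\le \e\cdot 2^{-m-2}$ such that $\{M_{\mathbf{n}}(x_m,\mathbf{a})p_m\}_{\mathbf{n}}$ converges in $\M$ for every $\mathbf{a}\in \mathcal{A}$. In parallel, I would apply Theorem~\ref{th:maximalergodic} to the difference $x-x_m$ with threshold $\lambda_m=2^{-m/2}$; this furnishes a projection $q_m\in P(\M)$ that depends only on $x-x_m$ (the weight $\mathbf{a}$ enters the argument in Theorem~\ref{th:maximalergodic} solely as a scalar multiplier on the averaging coefficients, so the projection extracted from the unweighted inequality serves uniformly in $\mathbf{a}$), satisfying
\[
\tau(q_m^\bot)\le C\,\frac{\|x-x_m\|_{L_1\log^{2(d-1)}L}}{\lambda_m}\le C\cdot 2^{-3m/2}
\]
and
\[
\sup_{\mathbf{n}}\bigl\|q_m M_{\mathbf{n}}(x-x_m,\mathbf{a})q_m\bigr\|_\infty\le \|\mathbf{a}\|_\infty\,\lambda_m
\]
for every $\mathbf{a}\in \ell_\infty(\mathbb{N}^d)$.

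Setting $e=\bigwedge_{m\ge 1}(p_m\wedge q_m)$, a geometric-series bound on $\sum_m(\tau(p_m^\bot)+\tau(q_m^\bot))$ yields $\tau(e^\bot)\le \e$ after a mild adjustment of constants. It then remains to check that $\{eM_{\mathbf{n}}(x,\mathbf{a})e\}_{\mathbf{n}}$ is $\|\cdot\|_\infty$-Cauchy for each fixed $\mathbf{a}\in \mathcal{A}$. Using $e\le p_m$ and $e\le q_m$, decompose
\[
eM_{\mathbf{n}}(x,\mathbf{a})e = e\bigl(p_m M_{\mathbf{n}}(x_m,\mathbf{a})p_m\bigr)e + eM_{\mathbf{n}}(x-x_m,\mathbf{a})e.
\]
The second summand has norm at most $\|\mathbf{a}\|_\infty 2^{-m/2}$ by the maximal estimate, while for the first summand the hypothesis ensures that $M_{\mathbf{n}}(x_m,\mathbf{a})p_m$, and therefore its compression by $e$ on both sides, converges in $\M$ as $\mathbf{n}\to\infty$. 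Given $\delta>0$, I would first choose $m$ so that $2\|\mathbf{a}\|_\infty 2^{-m/2}<\delta/2$ and then choose $\mathbf{N}$ making the first summand vary by less than $\delta/2$ for $\mathbf{n},\mathbf{n}'\ge \mathbf{N}$; this yields the Cauchy property and hence the $L_1\log^{2(d-1)}L$-NCbWW conclusion.

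The $L_2\log^{2(d-1)}L$-NCWW statement follows by the same scheme, using the second (one-sided, $L_2$) half of Theorem~\ref{th:maximalergodic} in place of the bilateral $L_1$ half: the approximation is performed within $\mathcal{B}\cap L_2\log^{2(d-1)}L(\M)$ (which remains dense in $L_2\log^{2(d-1)}L(\M)$ in the situations where the lemma is applied, including the dense subset $L_\infty(\M)\cap(E\oplus \mathcal{K}^\perp)$ produced in Section~3.1), and the decomposition above is replaced by its one-sided analogue controlling $\|M_{\mathbf{n}}(\cdot,\mathbf{a})e\|_\infty$. The only genuinely delicate point throughout is the $\mathbf{a}$-uniformity of the maximal projection $q_m$; this is the main (and comparatively minor) obstacle, and is handled as indicated by isolating the $\|\mathbf{a}\|_\infty$ factor in the maximal inequality so that a single $q_m$ works for all weights simultaneously.
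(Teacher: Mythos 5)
Your proposal is correct and follows essentially the same route as the paper: approximate $x$ from the dense set $\mathcal{B}$, control the error with the maximal inequality of Theorem~\ref{th:maximalergodic} (whose projection is indeed independent of $\mathbf{a}$, since only $\|\mathbf{a}\|_\infty$ enters), intersect the projections, and conclude by the triangle inequality. Your iterated version with $\lambda_m\to 0$ and a single projection $e=\bigwedge_m(p_m\wedge q_m)$ is in fact the more carefully quantified form of the paper's argument (which as written fixes one $\lambda$ and lets its projection depend on it), and your observation that the $L_2\log^{2(d-1)}L$ case requires density of $\mathcal{B}\cap L_2\log^{2(d-1)}L(\M)$ in $L_2\log^{2(d-1)}L(\M)$ --- true for the dense set actually used in Section~3.1 --- makes explicit a point the paper leaves implicit.
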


\begin{proof}
We first prove $\mathcal{A}$ is of $L_1\log^{2(d-1)}L(\M)$-NCbWW type. Fix a dynamical system $(\M,\tau,\mathbf{T})$. Taking any $x\in L_1\log^{2(d-1)}L(\M)$, any $\ep>0$ and any $\lambda>0,$ since $\mathcal B$ is dense in $L_1\log^{2(d-1)}L(\M),$ we can always find one $y \in \mathcal B$,  so that $\|x-y\|_{L_1\log^{2(d-1)}L(\M)} \le \frac{1}{2C}\lambda \ep$, $C$ is the positive constant from the application of the first maximal inequality in Theorem \ref{th:maximalergodic} to the element $x-y$: there exists a projection $e_1\in \mathcal{P}(\mathcal{M})$ such that $$\tau(e_1^{\bot}) \le
 C \frac{\| x-y \|_{L_1\log^{2(d-1)}L(\M)}}{\lambda}  \le  \frac{\ep}{ 2} ~ \text{ and } ~ \sup_{\mathbf{n}}\|e_1 (M_{\mathbf{n}}(x-y, \mathbf{a})) e_1\|_{\8} \le \| \mathbf{a}\|_{\8} \lambda.$$
On the other hand, by the assumption, there exists an projection $e_2$, such that $\tau(e_2^{\bot})\le \ep \slash 2$ and $ \{e_2 M_{\mathbf{n}}(y, \mathbf{a}) e_2 \} $ converges in $ \mathcal{M} $, which means that there exists $\mathbf{N} \in \mathbb{N}^d$, whenever $\mathbf{m,n}\ge \mathbf{N}$, we have $$ \| e_2 M_{\mathbf{m}}(y, \mathbf{a}) e_2 - e_2 M_{\mathbf{n}}(y, \mathbf{a}) e_2 \|_{\8} < \lambda.$$

Now take $e = e_1\wedge e_2$, then we have $\tau(e^{\bot})\le \ep$ and
\be
\| e M_{\mathbf{m}}(x, \mathbf{a}) e - e M_{\mathbf{n}}(x, \mathbf{a}) e \|_{\8}
\ee
\be\begin{split}
\le & \| e M_{\mathbf{m}}(x-y, \mathbf{a}) e \|_{\8} +\| e M_{\mathbf{m}}(y, \mathbf{a}) e - e M_{\mathbf{n}}(y, \mathbf{a}) e \|_{\8} + \|e M_{\mathbf{n}}(y-x, \mathbf{a}) e\|_{\8}\\
< & (2 \|\mathbf{a} \|_{\8}+1)\lambda,
\end{split}\ee
which means $\{e M_{\mathbf{n}}(x, \mathbf{a}) e\}$ is Cauchy sequence, thus converges in $\mathcal{M}$ for all $\mathbf{a}\in \mathcal{A}$. Therefore, we conclude that $\mathcal{A}$ is of $L_1\log^{2(d-1)}L$-NCbWW type.

The fact that $\mathcal{A}$ is of $L_2\log^{2(d-1)}L$-NCWW type can be shown using a similar argument. The only difference is that we use the second maximal inequality in Theorem \ref{th:maximalergodic}. We omit the details.
\end{proof}

After we finish all the preparing work, we can now conclude the proof of Theorem \ref{th:Main 1}.
\begin{proof}[Proof of Theorem \ref{th:Main 1}]
Fix any dynamical system $(\M,\tau,\mathbf{T})$. Using decomposition into ergodic parts, we can assume the dynamical system is ergodic. By Proposition \ref{pr:dense convergence}, we can take $\mathcal{B}=L_\infty(\M)\cap (E\oplus\mathcal{K}^{\perp})$ in Lemma \ref{lem:densetoall}, together with Theorem \ref{th:maximalergodic}, we get the conclusion of Theorem \ref{th:Main 1}.\end{proof}

Finally, we introduce the concept of convergence in measure (see for instance \cite{CLS2005}) and give a quick proof of Corollary \ref{cor:NCWW} to finish this section. We say that a sequence $\{x_{\mathbf{n}}\}\in L_0(\M)$ converges in measure (resp. bilaterally in measure) to $x \in L_0(\M)$, if, for any $ \e > 0, \d > 0$, we can find $\mathbf{N} = \mathbf{N}(\e, \d)\in \mathbb{N}^d$ such that for every $\mathbf{n} > \mathbf{N}$, there exists $e_{\mathbf{n}} \in P(\M)$ satisfying $\tau(e_{\mathbf{n}}^\bot) \le \e$ and $\|(x_{\mathbf{n}} - x)e_{\mathbf{n}}\| < \d$ (resp. $\|e_{\mathbf{n}}(x_{\mathbf{n}} - x)e_{\mathbf{n}}\| < \d$). However, by a multi-parameter version of Theorem 2.2 \cite{CLS2005}, it is known that convergence in measure is equivalent to convergence bilaterally in measure, thus we treat the two notions as the same.

\begin{proof}[Proof of Corollary \ref{cor:NCWW}]
From Theorem \ref{th:Main 1}, we know for any finite trace preserving dynamical system $(\M,\tau,\mathbf{T})$, for any $x\in L_1\log^{2(d-1)}L(\M)$ and any $\e>0$, there exists $e\in P(\M)$ such that $\tau(e^\bot)\le \e$ and $\{eM_{\mathbf{n}}(x, \mathbf{a})e\}$ is a Cauchy sequence in $\M$ for all $\mathbf{a}\in \mathcal{D}$. Therefore, fix any $\mathbf{a}$, there exists $x_{\mathbf{a},e}\in \M$ such that $eM_{\mathbf{n}}(x, \mathbf{a})e \rightarrow x_{\mathbf{a},e}$ in $\M$. Hence $\{eM_{\mathbf{n}}(x, \mathbf{a})e\}$ converges to $x_{\mathbf{a},e}$ in measure. On the other hand, it is easy to see that $\{M_{\mathbf{n}}(x, \mathbf{a})\}$ is also a Cauchy sequence relative to the convergence in measure. Then from the completeness of $L_0(\M)$ with respect to the measure topology, one can find $F(x,\mathbf{a})\in L_0(\M)$ such that $\{M_{\mathbf{n}}(x, \mathbf{a})\}$ converges to $F(x,\mathbf{a})$ in measure, which implies that $\{eM_{\mathbf{n}}(x, \mathbf{a})e\}$ converges to $eF(x,\mathbf{a})e$ in measure. Thus we get $x_{\mathbf{a},e} = eF(x,\mathbf{a})e$. And the fact $F(x,\mathbf{a})\in L_1\log^{2(d-1)}L(\M)$ follows from a multi-parameter version of Theorem 1.2 \cite{CLS2005} since it is clear that $\|M_{\mathbf{n}}(x, \mathbf{a})\|_{L_1\log^{2(d-1)}L} \le \|\mathbf{a}\|_{\8} \|x\|_{L_1\log^{2(d-1)}L} <\8$.

For the case of $x\in L_2\log^{2(d-1)}L(\M)$, similar arguments are applicable and thus we complete the proof.
\end{proof}

\section{proof of Theorem \ref{th:Main 2}}
To simplify notation, we only prove the two-parameter case, and similar arguments work also for other cases.
As explained in the Introduction, we need a noncommutative Van der Corput's inequality in multi-parameter case, which should be regarded as a multi-parameter analogue of the one established in \cite{NSZ2005}.

\begin{lem}\label{le:VanderCorput}
 Let $1 \le n_i \ge h_i \ge 0~(i=1,2)$ are natural numbers and $\{a_{j_1,j_2}\}_{1\le j_1 \le n_1,1\le j_2 \le n_2}$
are elements of a $C^\ast$-algebra with the norm $\|\cdot\|$, denote $H=(h_1+1)(h_2 +1)$, then
\be\begin{split}
    & \left\| \frac{1}{n_1 n_2} \sum_{j_1=1}^{n_1} \sum_{j_2=1}^{n_2} a_{j_1,j_2} \right\|^2 \\
< & \frac{4}{H} \left\| \frac{1}{n_1 n_2}\sum^{n_1}_{j_1=1}\sum^{n_2}_{j_2=1} a_{j_1,j_2}^\ast a_{j_1,j_2} \right\| + \frac{8}{H}\Bigg\{\sum^{h_1}_{d_1=1} \left\|  \frac{1}{n_1 n_2}\sum^{n_1}_{j_1=1} \sum^{n_2}_{j_2=1} a_{j_1,j_2}^\ast a_{j_1+d_1,j_2} \right\|\\
+   &  \sum_{d_2=1}^{h_2} \left\| \frac{1}{n_1 n_2}\sum^{n_1}_{j_1=1}\sum_{j_2=1}^{n_2} a_{j_1,j_2}^\ast a_{j_1,j_2+d_2}\right\| + \sum^{h_1}_{d_1=1} \sum_{d_2=1}^{h_2} \left\| \frac{1}{n_1 n_2}\sum^{n_1}_{j_1=1}\sum_{j_2=1}^{n_2} a_{j_1,j_2}^\ast a_{j_1+d_1,j_2+d_2} \right\| \\
+   &  \sum^{h_1}_{d_1=1} \sum_{d_2=1}^{h_2} \left\| \frac{1}{n_1 n_2}\sum^{n_1}_{j_1=1}\sum_{j_2=1}^{n_2} a_{j_1+d_1,j_2}^\ast a_{j_1,j_2+d_2} \right\|\Bigg\}.
\end{split}\ee
\end{lem}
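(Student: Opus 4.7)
The plan is to establish the inequality directly via a block-averaging representation followed by operator Cauchy--Schwarz, bypassing any iteration of the one-parameter Van der Corput inequality. Denote $S = \sum_{j_1=1}^{n_1}\sum_{j_2=1}^{n_2} a_{j_1, j_2}$, extend $a_{j_1, j_2} = 0$ for indices outside the rectangle, and introduce the block sums
\begin{equation*}
U_{m_1, m_2} = \sum_{k_1 = 0}^{h_1}\sum_{k_2 = 0}^{h_2} a_{m_1 + k_1,\, m_2 + k_2},\qquad (m_1, m_2) \in \Lambda,
\end{equation*}
where $\Lambda = \{1-h_1, \ldots, n_1\} \times \{1-h_2, \ldots, n_2\}$. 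A direct counting gives $H \cdot S = \sum_{(m_1, m_2) \in \Lambda} U_{m_1, m_2}$, since each $a_{j_1, j_2}$ with $1 \le j_i \le n_i$ is hit exactly $H$ times on the right.

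The key analytic step is then the operator Cauchy--Schwarz inequality $\| \sum_\alpha x_\alpha \|^2 \le N \| \sum_\alpha x_\alpha^* x_\alpha \|$, with $N$ the cardinality of the index set; here $|\Lambda| = (n_1+h_1)(n_2+h_2) \le 4 n_1 n_2$ under the hypothesis $h_i \le n_i$. Expanding $U^*_{m_1, m_2} U_{m_1, m_2}$ and substituting $i_r = m_r + k_r$, $d_r = k_r' - k_r$, a direct count yields
\begin{equation*}
\sum_{m \in \Lambda} U^*_m U_m = \sum_{|d_1| \le h_1,\,|d_2| \le h_2} (h_1+1-|d_1|)(h_2+1-|d_2|) \sum_{i_1, i_2} a^*_{i_1, i_2}\, a_{i_1+d_1,\, i_2+d_2}.
\end{equation*}
Bounding each coefficient by $H$, applying the triangle inequality, and dividing by $(n_1 n_2)^2$ produces
\begin{equation*}
\Bigl\| \tfrac{1}{n_1 n_2} S \Bigr\|^2 \le \frac{4}{H} \sum_{|d_1| \le h_1,\, |d_2| \le h_2} \Bigl\| \tfrac{1}{n_1 n_2} \sum_{i_1, i_2} a^*_{i_1, i_2}\, a_{i_1+d_1,\, i_2+d_2} \Bigr\|.
\end{equation*}

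To match the exact form of the statement, split the sum over $(d_1, d_2)$ by the signs of its entries. Using $\|A\| = \|A^*\|$, the case $(d_1, d_2)$ equals $(-d_1, -d_2)$ in norm, and after the translation $j_2 \mapsto j_2 + d_2$ the mixed-sign case $(d_1, -d_2)$ converts to $\| \tfrac{1}{n_1 n_2}\sum a^*_{j_1+d_1, j_2}\, a_{j_1, j_2+d_2} \|$ and in turn coincides with the $(-d_1, d_2)$ case. Thus each off-diagonal group of terms is counted twice, promoting the coefficient $\tfrac{4}{H}$ to $\tfrac{8}{H}$, while the diagonal term $(0,0)$ retains $\tfrac{4}{H}$; collecting the six resulting groups reproduces exactly the six sums in the conclusion. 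The main bookkeeping subtlety, and the only delicate point of the argument, is the identification of the mixed-sign cross-correlations $(d_1, -d_2)$ with the $a^*_{j_1+d_1, j_2}\, a_{j_1, j_2+d_2}$ sums via a shift of index and a single adjunction; the rest is routine operator Cauchy--Schwarz together with the combinatorial count of block overlaps.
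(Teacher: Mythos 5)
Your proof is correct and follows essentially the same route as the paper: the identity $H\cdot S=\sum_{m\in\Lambda}U_m$ is the paper's Formula 1 applied in each variable, the operator Cauchy--Schwarz step with $|\Lambda|=(n_1+h_1)(n_2+h_2)\le 4n_1n_2$ is the paper's Kadison--Schwarz step, and your closed-form overlap count with coefficients $(h_1+1-|d_1|)(h_2+1-|d_2|)$ is exactly what the paper extracts by iterating Formula 2. The only difference is presentational (you carry out the two-dimensional double counting in one shot rather than iterating the one-parameter formulas), and your sign-splitting correctly recovers all six groups, including the mixed-sign cross-correlation terms.
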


We need two basic formulas without proof, which are variants of Formula 8.2 and Formula 8.4 in \cite{NSZ2005}.

{\bf Formula 1.}  If $1 \le n \ge h \ge 0$ are natural numbers and $a_1, \cdots , a_n$
are elements of a $\ast$-algebra then, putting $a_j = 0$
for $j \le 0$ and for $j \ge n + 1$, we have
$$ (h + 1)\sum^n_{j=1} a_j = \sum^{n+h}_{k=1}\sum^{k}_{j=k-h} a_j.$$

{\bf Formula 2.}  If $1 \le n \ge h \ge 0$ are natural numbers and $a_{j,j'}$, $1 \le j, j' \le n$
are elements of a $\ast$-algebra, putting $a_{j,j'} = 0$ for $j$ or $j' \le 0$ and for $j$ or $j' \ge n + 1$, then we have
$$\sum^{n+h}_{k=1} \sum^k_{j,j'=k-h} a_{j,j'} = (h + 1)\sum^n_{j=1}a_{j,j}+
\sum^h_{d=1} (h - d + 1) \sum^n_{j=1} (a_{j,j+d} + a_{j+d,j}).$$

\begin{proof}
Using twice Formula 1, we have
\be\begin{split}
(h_1 +1)(h_2 +1) \sum_{j_1=1}^{n_1}\sum_{j_2=1}^{n_2} a_{j_1,j_2} & = (h_1 +1) \sum_{j_1 =1}^{n_1} \left( (h_2 +1) \sum_{j_2 =1}^{n_2} a_{j_1,j_2} \right) \\
& = (h_1 +1) \sum_{j_1 =1}^{n_1} \left(  \sum^{n_2+h_2}_{k_2=1}\sum^{k_2}_{j_2=k_2-h_2} a_{j_1,j_2} \right)  \\
& = \sum^{n_1+h_1}_{k_1=1}\sum^{n_2+h_2}_{k_2=1}\sum^{k_1}_{j_1=k_1-h_1}\sum^{k_2}_{j_2=k_2-h_2} a_{j_1,j_2} .
\end{split}\ee
Then by the Kadison-Schwarz inequality, we have
\be\begin{split}
  &(h_1 +1)^2 (h_2 +1)^2 \left( \sum_{j_1=1}^{n_1} \sum_{j_2=1}^{n_2} a_{j_1,j_2} \right)^\ast \left( \sum_{j_1=1}^{n_1}\sum_{j_2=1}^{n_2} a_{j_1,j_2} \right)\\
= & \left( \sum^{n_1+h_1}_{k_1=1}\sum^{n_2+h_2}_{k_2=1}\sum^{k_1}_{j_1=k_1-h_1}\sum^{k_2}_{j_2=k_2-h_2} a_{j_1,j_2} \right)^\ast \left( \sum^{n_1+h_1}_{k_1=1}\sum^{n_2+h_2}_{k_2=1}\sum^{k_1}_{j_1=k_1-h_1}\sum^{k_2}_{j_2=k_2-h_2} a_{j_1,j_2}  \right) \\
\le & (n_1+h_1)(n_2+h_2) \sum^{n_1+h_1}_{k_1=1}\sum^{n_2+h_2}_{k_2=1} \left| \sum^{k_1}_{j_1=k_1-h_1}\sum^{k_2}_{j_2=k_2-h_2} a_{j_1,j_2} \right|^2 \\
= & (n_1+h_1)(n_2+h_2) \sum^{n_1+h_1}_{k_1=1}\sum^{n_2+h_2}_{k_2=1} \sum^{k_1}_{j_1,j_1'=k_1-h_1}\sum^{k_2}_{j_2,j_2'=k_2-h_2} a_{j_1,j_2}^\ast a_{j_1',j_2'} \triangleq A.
\end{split}\ee
Denote $N_h=(n_1+h_1)(n_2+h_2)$ and use Formula 2, we get
\be\begin{split}
A= & N_h \sum^{n_1+h_1}_{k_1=1}\sum^{k_1}_{j_1,j_1'=k_1-h_1} \sum^{n_2+h_2}_{k_2=1}\sum^{k_2}_{j_2,j_2'=k_2-h_2} a_{j_1,j_2}^\ast a_{j_1',j_2'} \\
= & N_h(h_2 +1) \sum^{n_1+h_1}_{k_1=1}\sum^{k_1}_{j_1,j_1'=k_1-h_1} \sum_{j_2 =1}^{n_2}a_{j_1,j_2}^\ast a_{j_1',j_2} \\
+ & N_h \sum^{n_1+h_1}_{k_1=1}\sum^{k_1}_{j_1,j_1'=k_1-h_1} \sum_{d_2=1}^{h_2}(h_2-d_2 +1)\sum_{j_2=1}^{n_2}(a_{j_1,j_2}^\ast a_{j_1',j_2+d_2} + a_{j_1,j_2+d_2}^\ast a_{j_1',j_2})\\
\triangleq & N_h(h_2 +1) \sum_{j_2 =1}^{n_2} \mathrm{I}_{j_2} + N_h \sum_{d_2=1}^{h_2}(h_2-d_2 +1)\sum_{j_2=1}^{n_2} ( \mathrm{II}_{j_2,d_2} + \mathrm{II}^\ast_{j_2,d_2} ).
\end{split}\ee
Now use once more Formula 2, we have respectively
\be\begin{split} \mathrm{I}_{j_2} =& \sum^{n_1+h_1}_{k_1=1}\sum^{k_1}_{j_1,j_1'=k_1-h_1}a_{j_1,j_2}^\ast a_{j_1',j_2}\\
= & (h_1 +1)\sum^{n_1}_{j_1=1} a_{j_1,j_2}^\ast a_{j_1,j_2} +
  \sum^{h_1}_{d_1=1} (h_1 - d_1 + 1) \sum^{n_1}_{j_1=1} (a_{j_1,j_2}^\ast a_{j_1+d_1,j_2} + a_{j_1+d_1,j_2}^\ast a_{j_1,j_2}) ,
\end{split}\ee
and
\be\begin{split} \mathrm{II}_{j_2,d_2} = & \sum^{n_1+h_1}_{k_1=1}\sum^{k_1}_{j_1,j_1'=k_1-h_1} a_{j_1,j_2}^\ast a_{j_1',j_2+d_2}\\
= &  (h_1 + 1) \sum^{n_1}_{j_1=1} a_{j_1,j_2}^\ast a_{j_1,j_2+d_2} + \sum^{h_1}_{d_1=1} (h_1 - d_1 + 1) \sum^{n_1}_{j_1=1} ( a_{j_1,j_2}^\ast a_{j_1+d_1,j_2+d_2} + a_{j_1+d_1,j_2}^\ast a_{j_1,j_2+d_2} ) .
\end{split}\ee

Take the norm, use the assumption $n_i\geq h_i$ ($i=1,2$), and do some simple calculations,
\be\begin{split}
    & \left\| \frac{1}{n_1 n_2} \sum_{j_1=1}^{n_1} \sum_{j_2=1}^{n_2} a_{j_1,j_2} \right\|^2 \\
<   & \frac{N_h}{H n_1 n_2} \left\| \frac{1}{n_1 n_2}\sum^{n_1}_{j_1=1}\sum^{n_2}_{j_2=1} a_{j_1,j_2}^\ast a_{j_1,j_2} \right\| +
\frac{2N_h}{H n_1 n_2} \sum^{h_1}_{d_1=1} \left\|  \frac{1}{n_1 n_2}\sum^{n_1}_{j_1=1} \sum^{n_2}_{j_2=1} a_{j_1,j_2}^\ast a_{j_1+d_1,j_2} \right\| \\
+   & \frac{2N_h}{(h_1+1)H n_1 n_2} \sum_{d_2=1}^{h_2} \left\| \sum_{j_2=1}^{n_2}\frac{1}{n_1 n_2} \mathrm{II}_{j_2,d_2} \right\| \\
< & \frac{4}{H} \left\| \frac{1}{n_1 n_2}\sum^{n_1}_{j_1=1}\sum^{n_2}_{j_2=1} a_{j_1,j_2}^\ast a_{j_1,j_2} \right\| + \frac{8}{H}\sum^{h_1}_{d_1=1} \left\|  \frac{1}{n_1 n_2}\sum^{n_1}_{j_1=1} \sum^{n_2}_{j_2=1} a_{j_1,j_2}^\ast a_{j_1+d_1,j_2} \right\|\\
+   & \frac{8}{H} \sum_{d_2=1}^{h_2} \left\| \frac{1}{n_1 n_2}\sum^{n_1}_{j_1=1}\sum_{j_2=1}^{n_2} a_{j_1,j_2}^\ast a_{j_1,j_2+d_2}\right\| + \frac{8}{H} \sum^{h_1}_{d_1=1} \sum_{d_2=1}^{h_2} \left\| \frac{1}{n_1 n_2}\sum^{n_1}_{j_1=1}\sum_{j_2=1}^{n_2} a_{j_1,j_2}^\ast a_{j_1+d_1,j_2+d_2} \right\| \\
+   & \frac{8}{H} \sum^{h_1}_{d_1=1} \sum_{d_2=1}^{h_2} \left\| \frac{1}{n_1 n_2}\sum^{n_1}_{j_1=1}\sum_{j_2=1}^{n_2} a_{j_1+d_1,j_2}^\ast a_{j_1,j_2+d_2} \right\|.
\end{split}\ee

\end{proof}

With the above preparation, we can give the proof of our second main result now.
\begin{proof}[Proof of Theorem \ref{th:Main 2}]

Let $x \in \mathcal{K}^\bot \cap \M$ and fix $\e>0$. Use the multi-parameter ergodic theorem in \cite{JX2007} and ergodic property of $(T_1,T_2)$ (also c.f. Proposition 5.1 \cite{L2014}), we have $M_{n_1,n_2}(x) \rightarrow \tau(x)\cdot \I$ a.u. In other words, we can construct a projection $e\in P(\M)$ such that $\tau(e^\bot) \le \e$, and $$ \left\{M_{n_1, n_2}\left(T_1^{k_1}T_2^{k_2}(x^\ast) T_1^{s_1}T_2^{s_2}(x)\right)e \right\} \text{ converges to } \tau\left(T_1^{k_1}T_2^{k_2}(x^\ast) T_1^{s_1}T_2^{s_2}(x)\right)e = \widehat{\sigma_x}(k_1-s_1,k_2-s_2)e$$ in $\M$ for every $k_1,k_2,s_1,s_2 \in \mathbb{N}.$

Then, let $a_{j_1,j_2}= \lambda_1^{j_1}\lambda_2^{j_2}T_1^{j_1} T_2^{j_2}(x) e$ and employing Lemma \ref{le:VanderCorput}, we have
\be\begin{split}
    & \sup_{\lambda_1, \lambda_2 \in \mathbb{T}} \left\| M_{n_1,n_2}\big(x, (\lambda_1,\lambda_2)\big)e \right\|^2_\8 \\
\le & \frac{4}{H} \left\| e M_{n_1,n_2}(x^\ast x) e \right\|_\8 + \frac{8}{H}\sum^{h_1}_{d_1=1} \left\|  e M_{n_1,n_2}\big( x^\ast T_1^{d_1}(x) \big) e\right\|_\8\\
+   & \frac{8}{H} \sum_{d_2=1}^{h_2} \left\| e M_{n_1,n_2}\big( x^\ast T_2^{d_2}(x) \big) e\right\|_\8 + \frac{8}{H} \sum^{h_1}_{d_1=1} \sum_{d_2=1}^{h_2} \left\| e M_{n_1,n_2}\big( x^\ast T_1^{d_1}T_2^{d_2}(x) \big) e \right\|_\8 \\
+   & \frac{8}{H} \sum^{h_1}_{d_1=1} \sum_{d_2=1}^{h_2} \left\| e M_{n_1,n_2}\big( T_1^{d_1}(x^\ast)T_2^{d_2}(x) \big) e  \right\|_\8.
\end{split}\ee
Therefore, for fixed $h_1, h_2$, we have
\be\begin{split}
& \lim_{n_1,n_2} \sup_{\lambda_1, \lambda_2 \in \mathbb{T}} \left\| M_{n_1,n_2}\big(x, (\lambda_1,\lambda_2)\big) e\right\|^2_\8 \\
\le & \frac{4}{H} \|x\|_2^2 + \frac{8}{H}\sum^{h_1}_{d_1=1} |\widehat{\sigma_x}(-d_1,0)| + \frac{8}{H} \sum_{d_2=1}^{h_2}|\widehat{\sigma_x}(0,-d_2)| \\
+   & \frac{8}{H} \sum^{h_1}_{d_1=1} \sum_{d_2=1}^{h_2}|\widehat{\sigma_x}(-d_1,-d_2)| +  \frac{8}{H} \sum^{h_1}_{d_1=1} \sum_{d_2=1}^{h_2}|\widehat{\sigma_x}(d_1,-d_2)|\\
\le & \frac{4}{(h_1+1)(h_2 +1)} \|x\|_2^2 + \frac{8}{(h_1+1)(h_2 +1)} \sum^{h_1}_{l_1=-h_1} \sum_{l_2=-h_2}^{h_2}|\widehat{\sigma_x}(l_1,l_2)|.
\end{split}\ee

On the other hand, applying Wiener's criterion (c.f. Section 7.13\cite{Katz1976}) of continuity to measure $\sigma_x$, we have
\be
\lim_{h_1,h_2} \frac{1}{(h_1+1)(h_2 +1)} \sum^{h_1}_{l_1=-h_1} \sum_{l_2=-h_2}^{h_2}|\widehat{\sigma_x}(l_1,l_2)|^2=0,
\ee
thus
\be
\lim_{h_1,h_2} \frac{1}{(h_1+1)(h_2 +1)} \sum^{h_1}_{l_1=-h_1} \sum_{l_2=-h_2}^{h_2}|\widehat{\sigma_x}(l_1,l_2)|=0.
\ee
Thereupon, we conclude
\be
 \lim_{n_1,n_2} \sup_{\lambda_1, \lambda_2 \in \mathbb{T}} \left\| M_{n_1,n_2}\big(x, (\lambda_1,\lambda_2)\big)e \right\|^2_\8 = 0.
\ee
\end{proof}

\section{A characterization of multi-parameter bounded Besicovitch class}

In this section we first recall some preliminaries associated to the multi-parameter bounded Besicovitch class, then we give a characterization of this class as in the one-parameter case \cite{BL1985}, from which  we conclude that the class $\mathcal{D}$ is strictly larger than the bounded Besicovitch class.

\begin{definition}\label{def:Marcinkiewicz}
Let $A: \mathbb{Z}^d \rightarrow \mathbb{C} $ be a function. For $1 \le p < \8$, define
\be
\| A \|_p^p = \limsup_{\mathbf{n}} \frac{1}{ | \mathbf{n+1} |} \sum_{\mathbf{k} = - \mathbf{n}}^{\mathbf{n}} | A \left( \mathbf{k} \right) |^p.
\ee

We define the Marcinkiewicz space of order p, $$\mathfrak{M}_p = \{ A : ~ \| A \|_p < \8 \} $$ and the space $\mathfrak{M}_{\8} =\{A: \| A \|_{\8} = \sup_{\mathbf{k}} | A \left( \mathbf{k} \right) |< \8 \}$.
\end{definition}

\begin{rk} It is easy to see that $\mathfrak{M}_p$ is a vector space and $\| \cdot \|_p$ is a seminorm on it. A general definition of Marcinkiewicz space and more related results can be found in \cite{Ber1966}\cite{Khac1987}. We note that the H\"older inequality is valid for this seminorm. Specifically, we define a semi-inner product on $ \mathfrak{M}_{2}$, that is, for any $A,~B \in \mathfrak{M}_{2}$,
$$
\langle A , B \rangle = \lim_{\mathbf{n}} \frac{1}{ | \mathbf{n+1} |} \sum_{\mathbf{k} = - \mathbf{n}}^{\mathbf{n}} \overline{A ( \mathbf{k} )} B ( \mathbf{k} )
$$ if the limit exists.
We can also define a more general correlation on function $A$, as $\gamma_A(\mathbf{m})= \langle A, A^{\mathbf{m}}\rangle$ if it exists for every $\mathbf{m}\in \mathbb{Z}^d$, where $A^{\mathbf{m}}$ is $\mathbf{m}$-translation of $A$.
\end{rk}

Concerning translation and semi-inner product, we have the following simple results, which actually confirm some facts in Remark \ref{rk:spectralmeasure}:

\begin{prop}\label{prop:translation}
For any $A,~B \in \mathfrak{M}_{2}$, we have

\begin{enumerate}[{\rm (1)}]

\item $\| A^{\mathbf{m}} \|_2 = \| A \|_2 $, for any $\mathbf{m} \in \mathbb{Z}^d$;

\item $\langle A^{\mathbf{j}} , B^{\mathbf{m}} \rangle = \langle A , B^{\mathbf{m} - \mathbf{j}} \rangle = \langle A^{\mathbf{j} - \mathbf{m}} , B \rangle $, for any $\mathbf{m},~ \mathbf{j} \in \mathbb{Z}^d;$
\item If a function $A$ has a correlation as $\gamma_A $, then $\gamma_A $ is a positive definite function on $\mathbb{Z}^d$. In particular, $\gamma_A (-\mathbf{m}) = \overline{\gamma_A (\mathbf{m})}$ for every $\mathbf{m}\in \mathbb{Z}^d$.
\end{enumerate}
\end{prop}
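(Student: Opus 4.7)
The three statements rest on a single box-shift principle, so I would handle them in the order presented, getting longer mileage from each as I go.

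For (1), I would begin by performing the change of variables $\mathbf{k}' = \mathbf{k}+\mathbf{m}$ to rewrite
\[
\|A^{\mathbf{m}}\|_2^2 \;=\; \limsup_{\mathbf{n}}\frac{1}{|\mathbf{n+1}|}\sum_{\mathbf{k}'=-\mathbf{n}+\mathbf{m}}^{\mathbf{n}+\mathbf{m}}|A(\mathbf{k}')|^2.
\]
Once each $n_i$ exceeds $|m_i|$, the translated cube $[-\mathbf{n}+\mathbf{m},\mathbf{n}+\mathbf{m}]$ sits (componentwise) between the shrunken cube with endpoints $\pm(n_i-|m_i|)$ and the enlarged cube with endpoints $\pm(n_i+|m_i|)$. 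Since both ratios $\prod_i(n_i\pm|m_i|+1)\big/\prod_i(n_i+1)$ tend to $1$ as $\mathbf{n}\to\infty$, the average on the left is squeezed between two quantities whose $\limsup$ equals $\|A\|_2^2$, giving equality.

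For (2), I would apply the substitution $\mathbf{k}'=\mathbf{k}+\mathbf{j}$ to obtain
\[
\langle A^{\mathbf{j}},B^{\mathbf{m}}\rangle \;=\; \lim_{\mathbf{n}}\frac{1}{|\mathbf{n+1}|}\sum_{\mathbf{k}'=-\mathbf{n}+\mathbf{j}}^{\mathbf{n}+\mathbf{j}}\overline{A(\mathbf{k}')}\,B(\mathbf{k}'+\mathbf{m}-\mathbf{j}),
\]
then compare with the sum defining $\langle A,B^{\mathbf{m}-\mathbf{j}}\rangle$. Their difference is supported on the symmetric difference $\triangle$ of the boxes $[-\mathbf{n}+\mathbf{j},\mathbf{n}+\mathbf{j}]$ and $[-\mathbf{n},\mathbf{n}]$, and Cauchy--Schwarz bounds the error by $\bigl(\tfrac{1}{|\mathbf{n+1}|}\sum_{\triangle}|A|^2\bigr)^{1/2}\bigl(\tfrac{1}{|\mathbf{n+1}|}\sum_{\triangle}|B(\cdot+\mathbf{m}-\mathbf{j})|^2\bigr)^{1/2}$. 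Each factor is then controlled by the sandwiching argument of (1), writing the shell sum as the difference of averages over two concentric cubes whose normalizations both tend to that of $[-\mathbf{n},\mathbf{n}]$. This is the one delicate point in the whole proposition: the shell contribution vanishes only when the relevant $L^2$ mean is an honest limit, not just a $\limsup$. This is precisely what is granted to us whenever the inner product being discussed is declared to exist, so the argument goes through under the implicit hypothesis that $\langle \cdot,\cdot\rangle$ is defined on the pairs at hand. The identity $\langle A^{\mathbf{j}-\mathbf{m}},B\rangle = \langle A,B^{\mathbf{m}-\mathbf{j}}\rangle$ then follows by the same substitution.

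For (3), I would specialize (2) with $B=A$, $\mathbf{j}=\mathbf{m}_j$, $\mathbf{m}=\mathbf{m}_i$ to identify $\gamma_A(\mathbf{m}_i-\mathbf{m}_j)=\langle A^{\mathbf{m}_j},A^{\mathbf{m}_i}\rangle$, then pull the finite double sum over $(i,j)$ inside the limit:
\[
\sum_{i,j=1}^N\overline{z_i}z_j\,\gamma_A(\mathbf{m}_i-\mathbf{m}_j) \;=\; \lim_{\mathbf{n}}\frac{1}{|\mathbf{n+1}|}\sum_{\mathbf{k}=-\mathbf{n}}^{\mathbf{n}}\Bigl|\sum_{i=1}^N\overline{z_i}\,A(\mathbf{k}+\mathbf{m}_i)\Bigr|^2 \;\geq\; 0,
\]
which is the required positive definiteness. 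The special case $\gamma_A(-\mathbf{m})=\overline{\gamma_A(\mathbf{m})}$ is then immediate either from positive definiteness with $N=2$, or directly from (2) by conjugate-linearity: $\gamma_A(-\mathbf{m})=\langle A,A^{-\mathbf{m}}\rangle=\langle A^{\mathbf{m}},A\rangle=\overline{\langle A,A^{\mathbf{m}}\rangle}=\overline{\gamma_A(\mathbf{m})}$.
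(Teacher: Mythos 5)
Your proof is correct and follows essentially the same route as the paper: parts (1) and (2) are dispatched by reindexing the averaging boxes (which the paper simply declares ``direct'' and ``easy to verify''), and part (3) is obtained exactly as in the paper by combining (2) with sesquilinearity to rewrite the quadratic form as the limit of the nonnegative averages $\frac{1}{|\mathbf{n+1}|}\sum_{\mathbf{k}}\bigl|\sum_i \overline{z_i}A(\mathbf{k}+\mathbf{m}_i)\bigr|^2$. Your explicit treatment of the boundary-shell error in (2), and the observation that it is controlled precisely when the relevant means exist as honest limits, is a detail the paper omits entirely but is the right thing to say.
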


\begin{proof}
(1) is a direct result by the definition, and (2) is easy to verify.

 We only prove (3). Taking any $z_1,z_2,\cdots,z_N \in \mathbb{C}$, any $\mathbf{m}_1,\mathbf{m}_2, \cdots, \mathbf{m}_N \in \mathbb{Z}^d,$ and any positive integer $N$, by (2) we have
\be\begin{split}
 \sum_{i=1}^N \sum_{j=1}^N \overline{z}_i z_j \gamma_A ( \mathbf{m}_j - \mathbf{m}_i )&=\sum_{i=1}^N \sum_{j=1}^N \overline{z}_i z_j \langle A^{\mathbf{m}_i} , A^{\mathbf{m}_j} \rangle\\
&= \langle \sum_{i=1}^N z_i A^{\mathbf{m}_i} , \sum_{j=1}^N z_j A^{\mathbf{m}_j} \rangle \geq0,
\end{split}\ee
which finishes the proof.
\end{proof}

Let $\mathcal{H}_0$ be the space of all {\it trigonometric polynomials} as $\mathbf{P}= \{P(\mathbf{k})\}_{\mathbf{k}\in \mathbb{N}^d},$ where $P(\mathbf{k})= \sum_{\alpha}c_{\alpha} \mathbf{z}_{\alpha}^{\mathbf{k}}$ (here $\mathbf{z}_{\alpha}\in \mathbb{T}^d$ and the sum is finite).
Let $\mathbf{b}=\{b(\mathbf{k})\}_{\mathbf{k}\in \mathbb{N}^d}$ be a multi-parameter complex sequence. We say that $$ \mathbf{b}\text{ {\it has a mean} if  } \lim_{\mathbf{n}}\frac{1}{|\mathbf{n+1}|}\sum_{\mathbf{k}=\mathbf{0}}^{\mathbf{n}}b(\mathbf{k})\text{  exists.}$$

Now we define the bounded Besicovitch class firstly introduced in \cite{JO1994}.
\begin{definition}\label{def:Besicovitch}
Let $1\le p < \8$. A function $\mathbf{a}:\mathbb{N}^d \rightarrow \mathbb{C}$ belongs to the {\it Besicovitch class} $B(p)$ (also called p-Besicovitch) if for each $\ep>0$ there exists a trigonometric polynomial $\mathbf{P}=\{P_{\ep}(\mathbf{k})\}$ such that
\beq\label{eq:Besicovitch}
\|\mathbf{a}-\mathbf{P}\|_p^p=\lim_{\mathbf{n}}\frac{1}{|\mathbf{n+1}|}\sum_{\mathbf{k=0}}^{\mathbf{n}}|a(\mathbf{k})-P_{\ep}(\mathbf{k})|^p < \ep.
\eeq
If $\mathbf{a}\in B(1)$, it is usually called Besicovitch function or Besicovitch sequence.

A function $\mathbf{a}$ is called a {\it p-bounded Besicovitch function} if $\mathbf{a}\in B(p)\cap \mathfrak{M}_{\8}$, and bounded Besicovitch if $\mathbf{a}\in B(1)\cap \mathfrak{M}_{\8}$.
\end{definition}

\begin{rk}\label{re:B(p) and mp}
It is proved by Jones and Olsen that for all $p\ge1,$ $B(p)\cap \mathfrak{M}_\8 = B(1)\cap\mathfrak{M}_\8$, and
the classes $\mathfrak{M}_p$ and $B(p)$ satisfy the following properties:
\begin{enumerate}[{\rm (1)}]

\item If $\{a(\mathbf{k})\} \in \mathfrak{M}_p$ and $\{b(\mathbf{k})\} \in \mathfrak{M}_q$, where $\frac{1}{p}+\frac{1}{q}=1$, and $c(\mathbf{k})=a(\mathbf{k})b(\mathbf{k})$, then $\{c(\mathbf{k})\}\in \mathfrak{M}_1.$

\item If $\{a_\el(\mathbf{k})\}$ has a mean for every $\el$, and $\{a_\el(\mathbf{k})\} \rightarrow \{a(\mathbf{k})\}$ in the semi-norm of space $\mathfrak{M}_1$, then $\{a(\mathbf{k})\}$ has a mean.

\item If $\{a_\el(\mathbf{k})\} \rightarrow \{a(\mathbf{k})\}$ in the $\mathfrak{M}_p$ semi-norm, and $\{a_\el(\mathbf{k})b(\mathbf{k})\}$ has a mean for every $\el$ where $\{b(\mathbf{k})\} \in \mathfrak{M}_q, ~ \frac{1}{p}+\frac{1}{q}=1$, then $\{a(\mathbf{k})b(\mathbf{k})\}$ has a mean.

\item For any sequence $\{a(\mathbf{k})\} \in B(p)$ and any $\mathbf{z}\in \mathbb{T}^d$, the sequence $\{a(\mathbf{k})\mathbf{z}^{\mathbf{k}}\}\in B(p)$ and has a mean. In particular, each sequence in $B(p)$ has a mean.
\end{enumerate}
\end{rk}

\begin{definition}\label{def:BFkernel}
Given a finite sequence of positive integers $\{n_1, n_2, \dots, n_r\}$ and a corresponding set of real numbers $\{\b_1, \b_2, \dots, \b_r\}$ which are linearly independent over the rationals, denote
$$ B= B(n_1,n_2,\dots, n_r; \b_1, \b_2, \dots, \b_r) $$
and denote by $K_B$ the kernel
\be
K_B(t)= \sum_{\nu_1=-n_1}^{n_1}\cdots \sum_{\nu_r=-n_r}^{n_r} \left( 1-\frac{|\nu_1|}{n_1} \right) \cdots \left( 1-\frac{|\nu_r|}{n_r} \right)
e^{2\pi i(\nu_1\b_1+ \cdots +\nu_r \b_r)t}.
\ee
This Kernel is called Bochner-Fej\'er Kernel, and for simplicity, we denote $$d_B(\nu_1, \dots, \nu_r)= \left( 1-\frac{|\nu_1|}{n_1} \right) \cdots \left( 1-\frac{|\nu_r|}{n_r} \right) .$$
In the $d$-parameter case, define $\mathbf{K_B(t)}= K_{B_1}(t_1)K_{B_2}(t_2)\cdots K_{B_d}(t_d)$, where each $B_j$ is as above and $\mathbf{t}=(t_1,\dots, t_d).$
Let $\mathbf{t=k}\in \mathbb{N}^d$, the Bochner-Fej\'er Kernel in discrete case becomes
\be
\mathbf{K_B(k)}= K_{B_1}(k_1)K_{B_2}(k_2)\cdots K_{B_d}(k_d) \in \mathcal{H}_0.
\ee
\end{definition}

\begin{rk}\label{rk:BFpolynomial}
If we define $\mathbf{K_B}\ast \mathbf{a}(\mathbf{k})= \lim_{\mathbf{N}} \frac{1}{\mathbf{N+1}} \sum_{\mathbf{k}=0}^{\mathbf{N}}\mathbf{K_B(k-j)}a (\mathbf{j})$, where $\mathbf{a}$ be any Besicovitch function, the following properties have been shown in \cite{JO1994}:
\begin{enumerate}[{\rm (1)}]
\item $\mathbf{K_B}\ast \mathbf{a}$ is a trigonometric polynomial;

\item $\| \mathbf{K_B}\ast \mathbf{a} \|_\8 \le \|\mathbf{a}\|_\8$;

\item $\| \mathbf{K_B}\ast \mathbf{a} \|_1 \le \|\mathbf{a}\|_1$;

\item  For any $\ep>0$, there is a Bochner-Fej\'er polynomial $ \mathbf{K_B}\ast \mathbf{a}$ such that $\| \mathbf{a}- \mathbf{K_B}\ast \mathbf{a}\|_\8 < \ep.$
\end{enumerate}
\end{rk}

With all the previous preparations, we give the following key result which characterizes the multi-parameter bounded Besicovitch sequences by spectral measure.

\begin{thm}\label{th:Besicovitch}
$\mathbf{a}= \{a(\mathbf{k})\}$ is bounded Besicovitch, i.e. $\mathbf{a}\in B(1)\cap \mathfrak{M}_\8$, if and only if $\mathbf{a}$ satisfies the following conditions:
\begin{enumerate}[{\rm (1)}]
\item $\mathbf{a}\in S\cap \mathfrak{M}_\8$ and the spectral measure $\sigma_{\mathbf{a}}$ is discrete;

\item the amplitude $\Gamma_\mathbf{a}(\mathbf{z})=\la \{z^{\mathbf{k}}\}, \mathbf{a} \ra= \lim_{\mathbf{n}}\frac{1}{\mathbf{n+1}}\sum_{\mathbf{k}=0}^{\mathbf{n}}a(\mathbf{k})\overline{\mathbf{z}}^{\mathbf{k}}$ exists for each $\mathbf{z}\in \mathbb{T}^d$;

\item $\sigma_\mathbf{a}(\{\mathbf{z}\})= |\Gamma_\mathbf{a}(\mathbf{z})|^2$ for all $\mathbf{z}\in \mathbb{T}^d$.
\end{enumerate}

\end{thm}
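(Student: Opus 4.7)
\noindent\textbf{Proof plan for Theorem \ref{th:Besicovitch}.}

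The plan is to prove both directions by reducing everything to the trivial case of trigonometric polynomials and then passing to limits in the appropriate Besicovitch seminorm. The key observation is that for any trigonometric polynomial $P(\mathbf{k})=\sum_{\alpha=1}^M b_\alpha\mathbf{w}_\alpha^{\mathbf{k}}$ with distinct frequencies, direct calculation using orthogonality of characters, $\lim_{\mathbf{n}}\frac{1}{|\mathbf{n+1}|}\sum_{\mathbf{k}=\mathbf{0}}^{\mathbf{n}}(\mathbf{z}_\alpha\slash\mathbf{z}_\beta)^{\mathbf{k}}=\delta_{\alpha\beta}$, shows that $\gamma_P(\mathbf{m})=\sum|b_\alpha|^2\mathbf{w}_\alpha^{\mathbf{m}}$, hence $\sigma_P=\sum|b_\alpha|^2\delta_{\mathbf{w}_\alpha}$ is discrete, $\Gamma_P(\mathbf{w}_\alpha)=b_\alpha$ and $\Gamma_P\equiv 0$ off the spectrum, so conditions (1)--(3) hold for $P$ trivially.

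For the direction $(\Leftarrow)$, suppose $\mathbf{a}$ satisfies (1)--(3). Enumerate the atoms of $\sigma_\mathbf{a}$ as $\{\mathbf{z}_\alpha\}_{\alpha\in\mathbb{N}}$ with masses $|c_\alpha|^2$, where $c_\alpha=\Gamma_{\mathbf{a}}(\mathbf{z}_\alpha)$ by condition (3). I would introduce the candidate trigonometric polynomial approximants $P_N(\mathbf{k})=\sum_{\alpha\leq N}c_\alpha\mathbf{z}_\alpha^{\mathbf{k}}$ and compute the one-sided Besicovitch seminorm
\[
\|\mathbf{a}-P_N\|_2^2=\lim_{\mathbf{n}}\frac{1}{|\mathbf{n+1}|}\sum_{\mathbf{k}=\mathbf{0}}^{\mathbf{n}}|a(\mathbf{k})-P_N(\mathbf{k})|^2.
\]
Expanding the square and identifying each resulting limit via (2) for the cross terms $\langle\mathbf{a},\mathbf{z}_\alpha^{\mathbf{k}}\rangle=\Gamma_{\mathbf{a}}(\mathbf{z}_\alpha)=c_\alpha$, and via character orthogonality for $\|P_N\|_2^2=\sum_{\alpha\leq N}|c_\alpha|^2$, one obtains the Parseval-type identity $\|\mathbf{a}-P_N\|_2^2=\gamma_{\mathbf{a}}(\mathbf{0})-\sum_{\alpha\leq N}|c_\alpha|^2=\sigma_\mathbf{a}(\mathbb{T}^d)-\sum_{\alpha\leq N}|c_\alpha|^2$, which tends to zero by (1) and (3). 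Thus $\mathbf{a}\in B(2)$, and combined with $\mathbf{a}\in\mathfrak{M}_\infty$ the Jones--Olsen identification $B(2)\cap\mathfrak{M}_\infty=B(1)\cap\mathfrak{M}_\infty$ stated in Remark \ref{re:B(p) and mp} concludes this direction.

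For the direction $(\Rightarrow)$, assume $\mathbf{a}\in B(1)\cap\mathfrak{M}_\infty$. Condition (2) is immediate from property (4) of Remark \ref{re:B(p) and mp} applied to $\{a(\mathbf{k})\overline{\mathbf{z}^{\mathbf{k}}}\}$. For the correlation, observe that $f_\mathbf{m}(\mathbf{k})=\overline{a(\mathbf{k})}a(\mathbf{k}+\mathbf{m})$ is again bounded Besicovitch: approximating $\mathbf{a}$ by a trigonometric polynomial $P$ yields that $\overline{P(\mathbf{k})}P(\mathbf{k}+\mathbf{m})$ is a trigonometric polynomial approximating $f_\mathbf{m}$ in $\|\cdot\|_1$, the estimate coming from the splitting $f_\mathbf{m}-\overline{P}P^{\mathbf{m}}=\overline{(\mathbf{a}-P)}\mathbf{a}^{\mathbf{m}}+\overline{P}(\mathbf{a}^{\mathbf{m}}-P^{\mathbf{m}})$ and $\mathbf{a},P\in\mathfrak{M}_\infty$; hence $f_\mathbf{m}$ has a mean, giving $\gamma_\mathbf{a}(\mathbf{m})$ and $\mathbf{a}\in S$. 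The remaining task, which is the main obstacle, is to verify discreteness of $\sigma_\mathbf{a}$ together with the point-mass formula (3). I would proceed by combining the upper bound $|\Gamma_{\mathbf{a}}(\mathbf{z})|^2\leq\sigma_\mathbf{a}(\{\mathbf{z}\})$ from Corollary \ref{cor:WW} with a reverse Parseval argument: given $\epsilon>0$ and a trigonometric polynomial $P=\sum c_\alpha \mathbf{z}_\alpha^{\mathbf{k}}$ with $\|\mathbf{a}-P\|_2<\epsilon$, the estimate $|\Gamma_\mathbf{a}(\mathbf{z})-\Gamma_P(\mathbf{z})|\le\|\mathbf{a}-P\|_1\le\|\mathbf{a}-P\|_2$ holds uniformly in $\mathbf{z}$, while $|\|\mathbf{a}\|_2-\|P\|_2|<\epsilon$ and $\|P\|_2^2=\sum|c_\alpha|^2$. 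Squaring and summing gives $\bigl|\sigma_\mathbf{a}(\mathbb{T}^d)-\sum_{\mathbf{z}}|\Gamma_\mathbf{a}(\mathbf{z})|^2\bigr|\leq C\epsilon$; letting $\epsilon\to 0$ yields $\sigma_\mathbf{a}(\mathbb{T}^d)=\sum_{\mathbf{z}}|\Gamma_\mathbf{a}(\mathbf{z})|^2$, which simultaneously forces discreteness of $\sigma_\mathbf{a}$ (the atomic part exhausts the total mass) and, when combined with Corollary \ref{cor:WW}'s one-sided bound, forces equality $\sigma_\mathbf{a}(\{\mathbf{z}\})=|\Gamma_\mathbf{a}(\mathbf{z})|^2$ at every point. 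The delicate part here is controlling the tail of $\sum_{\mathbf{z}}|\Gamma_\mathbf{a}(\mathbf{z})|^2$ uniformly, which requires that the spectrum of the approximating polynomials cover the essential support of $\Gamma_\mathbf{a}$ up to $\epsilon$; this is exactly what the Bochner--Fej\'er construction recalled in Definition \ref{def:BFkernel} and Remark \ref{rk:BFpolynomial} is designed to provide.
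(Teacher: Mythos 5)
Your converse direction, and your derivation of $\mathbf{a}\in S$ and of condition (2) in the forward direction, coincide with the paper's argument: the same approximants $P_N(\mathbf{k})=\sum_{\alpha\le N}c_\alpha\mathbf{z}_\alpha^{\mathbf{k}}$, the same Parseval expansion yielding $\|\mathbf{a}-P_N\|_2^2=\sum_{\alpha>N}C_\alpha$, and the same appeal to $B(2)\cap\mathfrak{M}_\8=B(1)\cap\mathfrak{M}_\8$. Where you genuinely diverge is the proof of discreteness of $\sigma_{\mathbf{a}}$ and of the point-mass identity (3). The paper shows that for Bochner--Fej\'er approximants $\psi_n$ one has $\gamma_{\psi_n}(\mathbf{m})\to\gamma_{\mathbf{a}}(\mathbf{m})$ uniformly in $\mathbf{m}$ and $\Gamma_{\psi_n}(\mathbf{z})\to\Gamma_{\mathbf{a}}(\mathbf{z})$, so the identity $\sigma_{\psi_n}(\{\mathbf{z}\})=|\Gamma_{\psi_n}(\mathbf{z})|^2$, trivial for polynomials, passes to the limit; discreteness is then extracted from the fact that $\gamma_{\mathbf{a}}$ is uniformly almost periodic with nonnegative Bohr--Fourier coefficients, whence its Bohr--Fourier series converges absolutely and $\sigma_{\mathbf{a}}=\sum_\alpha C_\alpha\delta_{\mathbf{z}_\alpha}$. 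You instead run a mass-counting squeeze, playing the upper bound $|\Gamma_{\mathbf{a}}(\mathbf{z})|^2\le\sigma_{\mathbf{a}}(\{\mathbf{z}\})$ of Corollary \ref{cor:WW} against a lower bound $\sum_{\mathbf{z}}|\Gamma_{\mathbf{a}}(\mathbf{z})|^2\ge\sigma_{\mathbf{a}}(\mathbb{T}^d)$; equality then forces discreteness and (3) simultaneously. This is a legitimate and arguably more self-contained alternative (it leans on Corollary \ref{cor:WW} rather than on the Bochner--Fej\'er summability theory of almost periodic functions), but note that your intermediate claim that ``squaring and summing gives $\bigl|\sigma_{\mathbf{a}}(\mathbb{T}^d)-\sum_{\mathbf{z}}|\Gamma_{\mathbf{a}}(\mathbf{z})|^2\bigr|\le C\e$'' does not follow from the uniform estimate $|\Gamma_{\mathbf{a}}-\Gamma_P|\le\|\mathbf{a}-P\|_2$ alone for an arbitrary approximating polynomial, since the number of frequencies of $P$ is unbounded and the summed squared errors escape control. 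The repair is exactly the one you flag: for Bochner--Fej\'er polynomials the coefficients are $d_B(\nu)\,\Gamma_{\mathbf{a}}(\mathbf{z}_\nu)$ with $0\le d_B\le1$ (Remark \ref{rk:BFpolynomial}), so $\|P\|_2^2=\sum_\nu d_B(\nu)^2|\Gamma_{\mathbf{a}}(\mathbf{z}_\nu)|^2\le\sum_{\mathbf{z}}|\Gamma_{\mathbf{a}}(\mathbf{z})|^2$ while $\|P\|_2^2\ge(\|\mathbf{a}\|_2-\e)^2$; only this one-sided inequality is needed, the reverse one coming for free from Corollary \ref{cor:WW}. Written that way, your argument closes.
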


\begin{proof}
Assume first that $\mathbf{a}\in B(1)\cap \mathfrak{M}_\8$. From the definition of $B(1)$, we know there exists a sequence of trigonometric polynomials $\{\mathbf{P}_l\}$ that converges to $\mathbf{a}$ in the semi-norm $\|\cdot\|_1$, then by (2) of Remark \ref{re:B(p) and mp} we know $\mathbf{a}$ has a mean. Now consider $\mathbf{m}$-translation of $\mathbf{a}$ for any $\mathbf{m} \in \mathbb{N}^d.$ As $\mathbf{a}^{\mathbf{m}}\in B(1)$ so there also exists a sequence of trigonometric polynomials $\{\mathbf{P'}_s\}$ that converges to $\mathbf{a}^{\mathbf{m}}$ in the semi-norm $\|\cdot\|_1$. Thus using (3) of Remark \ref{re:B(p) and mp} twice we know $\gamma_\mathbf{a}(\mathbf{m})$ exists. So we conclude that $\mathbf{a}$ has a correlation. And the existence of $\Gamma_\mathbf{a}$ follows easily from (4) of Remark \ref{re:B(p) and mp}. It remains to check the second half of $(1)$ and $(3)$. We divide the proof into two steps.

Step 1. Let $\psi$ be a trigonometric polynomial $\psi =\{ \sum_{\a}c_\a \mathbf{z}_{\a}^{\mathbf{k}}\}$, then a direct calculation shows that its correlation $$\gamma_{\psi}(\mathbf{k})= \sum_{\a}|c_{\a}|^2 \mathbf{z}_{\a}^{\mathbf{k}}.$$
In particular, the spectral measure $\sigma_\psi$ is discrete and is given by $\sigma_\psi = \sum_{\a}|c_{\a}|^2 \delta_{\mathbf{z}_{\a}}$, and $\Gamma_\psi(\mathbf{z})= \sum_{\a}c_\a \delta_{\mathbf{z}_\a}(\mathbf{z})$ so $|\Gamma_\psi(\mathbf{z})|^2= \sum_{\a}|c_\a|^2 \delta_{\mathbf{z}_\a}(\mathbf{z})$. That is, $\sigma_\psi(\mathbf{z}) =|\Gamma_\psi(\mathbf{z})|^2$.

Step 2. By Remark \ref{rk:BFpolynomial}, let $\{\psi_n\}$ be a sequence of Bochner-Fej\'er polynomials such that
\be
\|\mathbf{a}-\psi_n\|_2\rightarrow 0 \text{ and } \|\psi_n\|_\8 \le \|\mathbf{a}\|_\8 \text{ for all }n\in \mathbb{N}.
\ee
It follows that $$\gamma_\mathbf{a}(\mathbf{m})= \la \mathbf{a}, \mathbf{a}^{\mathbf{m}} \ra = \lim_n \la \psi_n, \psi_n^{\mathbf{m}} \ra= \lim_n \gamma_{\psi_n} (\mathbf{m}) $$ uniformly in $\mathbf{m},$ and
$$ \Gamma_\mathbf{a}(\mathbf{z})= \la \{z^{\mathbf{k}}\}, \mathbf{a} \ra=\lim_n \la \{z^{\mathbf{k}}\},\psi_n \ra=\lim_n \Gamma_{\psi_n}(\mathbf{z}).$$
Thus $\sigma_\mathbf{a}(\{\mathbf{z}\})= \lim_n \sigma_{\psi_n}(\{\mathbf{z}\})=\lim_n |\Gamma_{\psi_n}(\mathbf{z})|^2=|\Gamma_\mathbf{a}(\mathbf{z})|^2$, so (3) is satisfied. Also we know $\gamma_\mathbf{a}$ is a uniformly almost periodic function (c.f. Theorem 8 P.3 \cite{Bes1954}), and the Bohr-Fourier coefficients are nonnegative. Thus by the Bochner-Fej\'er summing process as P.21 \cite{Bes1954}, the Bohr-Fourier series is absolutely and uniformly convergent, i.e., if $\gamma_\mathbf{a}(\mathbf{m})\sim \sum_{\a=1}^\8 C_\a \mathbf{z}_\a^{\mathbf{m}}$, then $\sum_{\a=1}^\8 |C_\a|=\sum_{\a=1}^\8 C_\a<\8$ and $\gamma_\mathbf{a}(\mathbf{m})= \sum_{\a=1}^\8 C_\a \mathbf{z}_\a^{\mathbf{m}}$ uniformly in $\mathbf{m}$. So $  \sigma_\mathbf{a}= \sum_{\a=1}^\8 C_\a \delta_{\mathbf{z}^\a}$ and (1) is totally satisfied for $\mathbf{a}.$

\

Assume now that $\mathbf{a}$ satisfies conditions (1), (2) and (3). By (1), there exists a sequence $\{\mathbf{z}_\a\}_{\a=1}^{\8}$ of distinct elements of $\mathbb{T}^d$, and a sequence of positive numbers $\{C_\a\}_{\a=1}^{\8}$ such that
$
\sigma_\mathbf{a}= \sum_{\a =1}^\8 C_\a \delta_{\mathbf{z}_\a}, ~ \ \sum_{\a =1}^\8 C_\a < +\8
$
and
$
\gamma_\mathbf{a}(\mathbf{m})= \sum_{\a =1}^\8 C_\a \mathbf{z}_\a^{\mathbf{m}}.
$
By conditions (2) and (3), $|\Gamma_\mathbf{a}(\mathbf{z})|^2= \sum_{\a =1}^\8 C_\a \delta_{\mathbf{z}_\a}(\mathbf{z}).$ Let $c_\a= {\Gamma_\mathbf{a}}(\mathbf{z}_\a)$, so $|c_\a|^2=C_\a$.

Define now $$a_N(\mathbf{k})= \sum_{\a = 1}^N c_\a \mathbf{z}_{\a}^{\mathbf{k}}.$$
To show that $\mathbf{a}\in B(2)$ it suffices to show that
\be
\|\mathbf{a}-\mathbf{a}_N\|_2^2 \rightarrow 0 ~~\text{ as } N \rightarrow \8,
\ee
which follows from
\be\begin{split}
\|\mathbf{a}-\mathbf{a}_N\|_2^2 & = \la a(\mathbf{k}) - \sum_{\a = 1}^N c_\a \mathbf{z}_{\a}^{\mathbf{k}},a(\mathbf{k}) - \sum_{\a = 1}^N c_\a \mathbf{z}_{\a}^{\mathbf{k}} \ra \\
& = \la \mathbf{a}, \mathbf{a}\ra -\sum_{\a = 1}^N c_\a \la \mathbf{a}, \{\mathbf{z}_{\a}^{\mathbf{k}}\} \ra - \sum_{\a = 1}^N \bar{c}_\a \la \{\mathbf{z}_{\a}^{\mathbf{k}}\}, \mathbf{a}\ra + \sum_{\a = 1}^N \sum_{\b = 1}^N\bar{c}_\a c_\b \la \{\mathbf{z}_{\a}^{\mathbf{k}}\},\{\mathbf{z}_{\b}^{\mathbf{k}}\} \ra\\
& = \gamma_\mathbf{a}(\mathbf{0}) - \sum_{\a = 1}^N c_\a \overline{\Gamma_\mathbf{a}(\mathbf{z}_\a)} - \sum_{\a = 1}^N \bar{c}_\a \Gamma_\mathbf{a}(\mathbf{z}_\a)+ \sum_{\a = 1}^N \sum_{\b = 1}^N\bar{c}_\a c_\b \delta_\a^\b\\
& = \sum_{\a =1}^\8 C_\a - \sum_{\a = 1}^N |c_\a|^2 - \sum_{\a = 1}^N |c_\a|^2 + \sum_{\a = 1}^N |c_\a|^2\\
& = \sum_{\a >N} C_\a \rightarrow 0~~\text{ as } N \rightarrow \8.
\end{split}\ee
This finishes the proof.

\end{proof}

\begin{example}\label{ex:Besi}
Let $a(\mathbf{k}) = (-1)^{[\log (k_1 + k_2 + \cdots + k_d +1)]}$(here $[x]$ denotes the largest integer $n\le x$). It is easy to see that the sequence $\mathbf{a}=\{a(\mathbf{k})\}$ has a correlation and that $\gamma_\mathbf{a}(\mathbf{m})=1$ for all $\mathbf{m} \in \mathbb{Z}^d.$ Thus the spectral measure $\sigma_\mathbf{a}$ corresponding to $\mathbf{a}$ is Dirac measure at $\mathbf{1} \in \mathbb{T}^d$, while $\sigma_\mathbf{a}$ is discrete. It is also not hard to check that the amplitude $\Gamma_\mathbf{a}(z)= \lim_{\mathbf{n}}\frac{1}{|\mathbf{n+1}|}\sum_{\mathbf{k}=0}^{\mathbf{n}}a(\mathbf{k})\bar{\mathbf{z}}^{\mathbf{k}}$ exists and equals $0$ for all $\mathbf{z}\in \mathbb{T}^d.$ Thus we know $\sigma_\mathbf{a}(\{\mathbf{1}\})=1$ while $|\Gamma_\mathbf{a}(1)|^2=0$. So the sequence $\mathbf{a}$ satisfies conditions (1) and (2) of Theorem \ref{th:Besicovitch}, but fails to satisfy condition (3).
\end{example}
Thus we have confirmed that the class of sequences $\mathcal{D}$ is strictly larger than the class of bounded Besicovitch sequences.

\
\noindent \textbf{Acknowledgement.} Guixiang Hong is supported in part by the ICMAT Severo Ochoa Grant SEV-2011-0087 (Spain).

\

\enlargethispage{2cm}

\vskip20pt

\hfill \noindent \textbf{Guixiang Hong} \\
\null \hfill School of Mathematics and Statistics \\
\null \hfill Wuhan University \\
\null \hfill Wuhan 430072. China \\
\null \hfill Instituto de Ciencias Matem{\'a}ticas \\ \null \hfill
CSIC-UAM-UC3M-UCM \\ \null \hfill Consejo Superior de
Investigaciones Cient{\'\i}ficas \\ \null \hfill C/ Nicol\'as Cabrera 13-15.
28049, Madrid. Spain \\ \null \hfill\texttt{guixiang.hong@whu.edu.cn}

\vskip2pt

\hfill \noindent \textbf{Mu Sun} \\
\null \hfill School of Mathematics and Statistics \\
\null \hfill Wuhan University \\
\null \hfill Wuhan 430072. China \\
\null \hfill\texttt{sunmu508@mails.ucas.ac.cn}

\end{document}